\documentclass[11pt,a4paper]{amsart}
\usepackage{amsmath,amsthm,amsfonts,amssymb,graphicx,psfrag,dsfont,a4wide}
\usepackage{color}

\usepackage{hyperref}
\usepackage[alphabetic,initials]{amsrefs}

\psfrag{(M,xi)}{$(M,\xi)$}
\psfrag{(M',xi')}{$(M',\xi')$}
\psfrag{N(T), T in I0}{$\nN(T)$, $T \subset \iI_0$}
\psfrag{N(pM0)}{$\nN(\p M_0)$}
\psfrag{N(B)}{$\nN(B)$}
\psfrag{one}{$1$}
\psfrag{-one}{$-1$}
\psfrag{r}{$r$}
\psfrag{r'}{$r'$}
\psfrag{-r}{$-r$}
\psfrag{-r'}{$-r'$}
\psfrag{tau=1}{$\tau=1$}
\psfrag{tau=-1}{$\tau=-1$}
\psfrag{tau=0}{$\tau=0$}
\psfrag{rho}{$\rho$}
\psfrag{t}{$t$}
\psfrag{Wj}{$\wW_j$}
\psfrag{UTj}{$\uU_{T_j}$}
\psfrag{1 x Tj}{$\{1\} \times T_j$}
\psfrag{[0,1] x N(Tj)}{$[0,1] \times \nN(T_j)$}
\psfrag{HTj}{$H_{T_j} = \p \overline{\uU}_{T_j}$}
\psfrag{M'flat}{$M'_{\text{flat}}$}
\psfrag{(M'con,xi')}{$(M'_{\text{convex}},\xi')$}
\psfrag{M0P}{$M_0^P$}
\psfrag{...}{$\cdots$}
\psfrag{T}{$T$}
\psfrag{(W',om)}{$(W',\omega)$}
\psfrag{D x S2}{$\DD \times S^2$}
\psfrag{S1 x S2}{$S^1 \times S^2$}
\psfrag{(W,om)}{$(W,\omega)$}
\psfrag{T1}{$T_1$}
\psfrag{T2}{$T_2$}
\psfrag{T3}{$T_3$}
\psfrag{T0}{$T_0$}
\psfrag{M0}{$M_0$}
\psfrag{M1}{$M_1$}
\psfrag{M2}{$M_2$}
\psfrag{M3}{$M_3$}
\psfrag{(T3,xi2)}{$(T_3,\xi_2)$}
\psfrag{(S1S2,xiOT)}{$(S^1 \times S^2,\xi_\OT)$}
\psfrag{(S1S2,xi0) c (S1S2,xi0)}{$(S^1 \times S^2,\xi_0) \sqcup (S^1 \times S^2,\xi_0)$}
\psfrag{T-}{$T_-$}
\psfrag{T+}{$T_+$}
\psfrag{M-}{$M_-$}
\psfrag{M+}{$M_+$}
\psfrag{M-'}{$M_-'$}
\psfrag{Z}{$Z$}
\psfrag{Z'}{$Z'$}
\psfrag{GT(M,xi)ge1}{$\GT(M,\xi) \ge 1$}
\psfrag{Sig-}{$\Sigma_-$}
\psfrag{Sig+}{$\Sigma_+$}
\psfrag{Gam}{$\Gamma$}
\psfrag{overtwisted}{overtwisted}

\setcounter{tocdepth}{2}

\theoremstyle{plain}
\newtheorem*{thmu}{Theorem}

\newtheorem{thm}{Theorem}
\newtheorem{thmp}{Theorem}
\newtheorem{prop}{Proposition}[section]
\newtheorem{conj}{Conjecture}
\newtheorem{cor}{Corollary}
\newtheorem*{coru}{Corollary}
\newtheorem{lemma}[prop]{Lemma}

\newtheorem{question}{Question}

\theoremstyle{definition}

\newtheorem{defn}[prop]{Definition}

\theoremstyle{remark}
\newtheorem{remark}[prop]{Remark}



\newcommand{\core}{^{\operatorname{K}}}

\newcommand{\dR}{{\operatorname{dR}}}
\newcommand{\ECH}{{\operatorname{ECH}}}

\newcommand{\im}{\operatorname{im}}


\renewcommand{\AA}{{\mathbb A}}
\newcommand{\CC}{{\mathbb C}}
\newcommand{\DD}{{\mathbb D}}

\newcommand{\NN}{{\mathbb N}}
\newcommand{\QQ}{{\mathbb Q}}
\newcommand{\RR}{{\mathbb R}}

\newcommand{\ZZ}{{\mathbb Z}}

\newcommand{\fF}{{\mathcal F}}

\newcommand{\iI}{{\mathcal I}}

\newcommand{\nN}{{\mathcal N}}

\newcommand{\uU}{{\mathcal U}}
\newcommand{\vV}{{\mathcal V}}
\newcommand{\wW}{{\mathcal W}}

\newcommand{\p}{\partial}
\newcommand{\Abstract}{{\operatorname{abs}}}

\newcommand{\defin}[1]{\textbf{#1}}

\newcommand{\GT}{\operatorname{GT}}

\newcommand{\PD}{\operatorname{PD}}

\newcommand{\Lie}{{\mathcal{L}}}
\newcommand{\OT}{{\operatorname{OT}}}

\newcommand{\handle}{\mathcal{H}}
\newcommand{\roundhandle}{\widehat{\mathcal{H}}}
\renewcommand{\core}{\mathcal{K}}
\newcommand{\cocore}{\mathcal{K}'}
\newcommand{\roundcore}{\widehat{\mathcal{K}}}
\newcommand{\roundcocore}{\widehat{\mathcal{K}}'}
\newcommand{\cob}{\curlyeqprec}
\newcommand{\ecob}{\prec}
\newcommand{\necob}{\nprec}

\numberwithin{equation}{section}

\title[Non-Exact Symplectic Cobordisms]{Non-Exact Symplectic Cobordisms Between Contact $3$-Manifolds}
\author{Chris Wendl}
\address{Department of Mathematics \\ 
University College London \\
Gower Street \\
London WC1E 6BT \\ 
United Kingdom}
\email{c.wendl@ucl.ac.uk}
\thanks{Research supported by an Alexander von Humboldt
Foundation fellowship.}
 
\subjclass[2010]{Primary 57R17; Secondary 53D35, 57Q20, 53D42}

\begin{document}

\begin{abstract}
We show that the pre-order defined on the category of contact manifolds
by arbitrary symplectic cobordisms is considerably less rigid than
its counterparts for exact or Stein cobordisms: in particular,
we exhibit large new classes of contact $3$-manifolds which are symplectically
cobordant to something overtwisted, or to the tight $3$-sphere, or which
admit symplectic caps containing symplectically embedded spheres with
vanishing self-intersection.  These constructions imply new and simplified
proofs of several recent results involving fillability, planarity and
non-separating contact type embeddings.  The cobordisms are built from
symplectic handles of the form $\Sigma \times \DD$ and
$\Sigma \times [-1,1] \times S^1$, which have symplectic cores and can
be attached to contact
$3$-manifolds along sufficiently large neighborhoods of transverse links
and pre-Lagrangian tori.  
We also sketch a construction
of $J$-holomorphic foliations in these cobordisms and formulate a 
conjecture regarding maps induced on Embedded Contact Homology with twisted
coefficients.
\end{abstract}

\maketitle

\tableofcontents

\section{Introduction}
\label{sec:intro}

Many important notions and results in contact topology can be expressed in 
terms of symplectic cobordisms. For example, the existence of a symplectic 
cobordism between particular contact manifolds can be used to determine 
whether one of them is symplectically fillable,
assuming the filling properties of the other are already understood. 
Moreover, cobordisms
are intimately associated with various notions of surgery, 
e.g.~Weinstein \cite{Weinstein:surgery} defined a notion of symplectic
handle attachment in which handles with Lagrangian cores can be attached
along Legendrian spheres in a contact manifold $(M,\xi)$, giving a Stein
cobordism (see \cite{CieliebakEliashberg}) to the contact manifold obtained 
from $(M,\xi)$ by Legendrian surgery.  In dimension
three, Eliashberg \cite{Eliashberg:cap}, Gay \cite{Gay:GirouxTorsion} and 
Gay-Stipsicz \cite{GayStipsicz:cap} have defined various other
types of surgeries on contact manifolds that yield symplectic cobordisms 
which are not Stein.
In some cases, these cobordisms have also been shown to induce surprisingly well-behaved
morphisms for certain contact invariants, e.g.~in Heegaard Floer homology \cite{Baldwin:cap}. 
The cobordism of \cite{Gay:GirouxTorsion} and its precursor in \cite{Eliashberg:fillableTorus}
yielded breakthroughs in the study of symplectic
fillings, resulting in the proof that contact manifolds with Giroux torsion are not strongly
fillable. The latter result was recently reinterpreted by the author \cite{Wendl:openbook2}
in the wider context
of blown up summed open book decompositions, leading to an infinite hierarchy of more
general filling obstructions that can be detected via holomorphic curves.

The main purpose of the present article is to show that the various
seemingly unrelated constructions
of non-Stein cobordisms mentioned above are all special cases of a much more general phenomenon, which arises
naturally in the setting of blown up summed open books and produces non-exact symplectic
cobordisms in many situations where exact or Stein cobordisms are forbidden. The overall
pattern seems to be that while exact and Stein cobordisms are rigidly constrained by a variety
of symplectic obstructions (for instance in Symplectic Field Theory \cite{LatschevWendl} 
or Heegaard Floer homology \cite{Karakurt:Stein}), 
non-exact symplectic cobordisms are quite flexible: they tend to exist
whenever there is no obvious reason why they should not.

Our constructions are based on a new notion of generalized symplectic handles, which have
symplectic cores and co-cores and can be attached to contact $3$-manifolds along ``sufficiently
wide'' neighborhoods of transverse links and pre-Lagrangian tori.  The notion of a
``sufficiently wide'' neighborhood here is somewhat subtle, and indicates that
in contrast to Weinstein handle attachment and
many other forms of surgery, our surgery is not a truly \emph{local} operation---e.g.~the surgery
along a transverse knot requires a neighborhood of a size that is only guaranteed to exist
in certain situations, notably when the knot is one of several binding components in
a supporting open book.

We begin by stating in
\S\ref{sec:results} the essential definitions and explaining some existence results for non-exact symplectic
cobordisms that follow from the handle construction. As easy applications, these results
imply new and substantially simplified proofs of several recent theorems of the author and
collaborators on obstructions to symplectic fillings, contact type embeddings and embeddings
of partially planar domains. The main results concerning the handle construction itself will
be explained in \S\ref{sec:handles}, together with a simple application to Embedded Contact Homology and
a conjectured generalization. In \S\ref{sec:consequences} we discuss further applications and examples, providing a
unified framework for reproving several important previous results of Eliashberg, Gay, Etnyre
and others involving the existence of symplectic cobordisms. The hard work is then undertaken
in \S\ref{sec:details}, of which the first several sections construct the symplectic handles described in
\S\ref{sec:handles}, \S\ref{sec:proofs} completes the proofs of the results stated in the introduction, 
and \S\ref{sec:holomorphic} discusses
the construction of a holomorphic foliation in the cobordisms, which we expect should have
interesting applications in Embedded Contact Homology and/or Symplectic Field Theory.

\subsubsection*{Acknowledgments}

The writing of this article benefited substantially from discussions
with John Etnyre, David Gay, Michael Hutchings,
Klaus Niederkr\"uger, Andr\'as Stipsicz, Cliff Taubes and
Jeremy Van Horn-Morris.  Several of these conversations took place at the
March 2010 MSRI Workshop \textsl{Symplectic and Contact Topology and 
Dynamics: Puzzles and Horizons}.
I would also like to thank Patrick Massot for suggesting the use of
the term ``co-fillable,'' as well as Paolo Ghiggini and an anonymous 
referee, both of whom read the original
version quite carefully and made several suggestions for improving the
exposition.

\subsection{Some background and sample results}
\label{sec:results}

In topology, an oriented cobordism from one closed oriented
manifold~$M_-$ to another~$M_+$ is a compact oriented manifold~$W$
such that $\p W = M_+ \sqcup (-M_-)$.  If~$W$ has dimension~$2n$ and 
also carries a symplectic
structure~$\omega$, then it is natural to consider the case where
$(W,\omega)$ is \defin{symplectically convex} at~$M_+$ and \defin{concave} 
at~$M_-$: this means there exists a vector field
$Y$ near~$\p W$ which points transversely outward at~$M_+$ and 
inward at~$M_-$, and is a \defin{Liouville} vector field, 
i.e.~$\Lie_Y\omega = \omega$.  In this case the $1$-form
$\lambda := \iota_Y\omega$ is a primitive of~$\omega$ and its restriction
to each boundary component~$M_\pm$ is a (positive) \defin{contact form},
meaning it satisfies $\lambda \wedge (d\lambda)^{n-1} > 0$.  The
induced \defin{contact structure} on~$M_\pm$ is the 
co-oriented\footnote{Though contact structures need not be co-orientable
in general, all contact structures considered in this paper will be, and
we shall regard the co-orientation always as an essential part of the data,
though it will usually be suppressed in the notation.}
hyperplane field
$\xi_\pm := \ker\left( \lambda|_{TM\pm}\right)$, and up to isotopy it
depends only on the symplectic structure near the boundary.  We thus
call $(W,\omega)$ a \defin{(strong) symplectic cobordism} from
$(M_-,\xi_-)$ to $(M_+,\xi_+)$, and when such a cobordism exists, we say
that $(M_-,\xi_-)$ is \defin{(strongly) symplectically cobordant} 
to $(M_+,\xi_+)$.
If $\lambda$ also extends to a global primitive of $\omega$,
or equivalently, $Y$ extends to a global Liouville field on~$W$, then
we call $(W,\omega)$ an \defin{exact symplectic cobordism} from
$(M_-,\xi_-)$ to $(M_+,\xi_+)$.  Whenever $(M_-,\xi_-)$ and $(M_+,\xi_+)$
are both connected, we shall abbreviate the existence of a
\emph{connected} symplectic cobordism from $(M_-,\xi_-)$ to $(M_+,\xi_+)$
by writing
$$
(M_-,\xi_-) \cob (M_+,\xi_+)
$$
for the general case, and
$$
(M_-,\xi_-) \ecob (M_+,\xi_+)
$$
for the exact case.\footnote{The reason to single out connected
cobordisms is that
technically, every closed contact $3$-manifold $(M,\xi)$ is 
symplectically cobordant to the standard 
contact $3$-sphere $(S^3,\xi_0)$, namely via the disjoint union of
a symplectic cap for $(M,\xi)$ with a symplectic filling of $(S^3,\xi_0)$.
We will see however that if the cobordism is required to be connected,
then the question of when $(M,\xi) \cob (S^3,\xi_0)$ becomes an interesting
one, cf.~Theorems~\ref{thm:toS3}, \ref{thm:planar} and~\ref{thm:nonpositive}.}

When $\dim W = 4$, it is also interesting to consider a much weaker notion: 
without assuming
that $\omega$ is exact near $\p W$, we call $(W,\omega)$ a
\defin{weak symplectic cobordism} from $(M_-,\xi_-)$ to $(M_+,\xi_+)$
if $\xi_\pm$ are any two positive co-oriented (and hence also oriented) 
contact structures such that $\omega|_{\xi_\pm} > 0$.  We then say
that $\omega$ \defin{dominates} the contact structures on both boundary
components.  In order to distinguish strong symplectic cobordisms
from this weaker notion, we will sometimes refer to convex/concave boundary
components of strong cobordisms as \emph{strongly} convex/concave.

It is a standard fact that strong or exact symplectic cobordisms can always be
glued together along contactomorphic boundary components, thus
the relations $\cob$ and $\ecob$ define preorders on 
the category of closed and connected contact manifolds, 
i.e.~they are reflexive and transitive.  They are
neither symmetric nor antisymmetric, as is clear from some simple examples 
that we shall recall in a moment.
Regarding the 
empty set as a trivial example of a contact manifold, we say that
$(M,\xi)$ is \defin{weakly}/\defin{strongly}/\defin{exactly} \defin{fillable} 
if there exists a weak/strong/exact symplectic cobordism from $\emptyset$
to $(M,\xi)$.  For example, the tight $3$-sphere
$(S^3,\xi_0)$ is exactly fillable, as
it is the convex boundary of the unit $4$-ball with its standard 
symplectic structure.
There are many known examples of contact $3$-manifolds that are not
fillable by these various definitions: the original such result, that the
so-called \emph{overtwisted} contact manifolds are not weakly fillable,
was proved by Gromov \cite{Gromov} and Eliashberg 
\cite{Eliashberg:overtwisted}.  In contrast, Etnyre and Honda
\cite{EtnyreHonda:cobordisms} showed that every contact $3$-manifold
admits a \defin{symplectic cap}, meaning it is strongly cobordant
to~$\emptyset$ (though never exactly, due to Stokes' theorem).

There is an obvious obstruction to the relation
$(M_-,\xi_-) \cob (M_+,\xi_+)$ whenever $(M_-,\xi_-)$ is strongly fillable
but $(M_+,\xi_+)$ is not, e.g.~$(M_-,\xi_-)$ cannot be the tight
$3$-sphere if $(M_+,\xi_+)$ is overtwisted.  Put another way, symplectic
cobordisms imply filling obstructions, as $(M,\xi)$ cannot be fillable if
it is cobordant to anything overtwisted.  The following question
may be viewed as a test case for the existence of subtler obstructions
to symplectic cobordisms.

\begin{question}
\label{question:cob}
Is every contact $3$-manifold $(M,\xi)$ that is not strongly fillable
also symplectically cobordant to some overtwisted contact manifold 
$(M_\OT,\xi_\OT)$?
\end{question}

This question was open when the first version of the present article appeared,
but it has since been answered in the negative: by an argument of
Hutchings \cite{Hutchings:SFT}, 
$(M,\xi) \cob (M_\OT,\xi_\OT)$ implies that the contact invariant
in the Embedded Contact Homology of $(M,\xi)$ must vanish, and therefore
(using \cite{ColinGhigginiHonda:HF=ECH3}), so does the Ozsv\'ath-Szab\'o
contact invariant.  A negative example for Question~\ref{question:cob}
is therefore furnished by any nonfillable contact manifold with nonvanishing
Ozsv\'ath-Szab\'o invariant; the first such examples were found by
Lisca and Stipsicz in \cite{LiscaStipsicz:tight1}. 

The answer to the corresponding question for exact cobordisms is much less subtle:
by an argument originally due to Hofer \cite{Hofer:weinstein}, $(M,\xi) \ecob
(M_\OT,\xi_\OT)$ implies that every Reeb vector field on $(M,\xi)$ admits
a contractible periodic orbit, yet there are simple examples of
contact manifolds without contractible orbits that are known to be
non-fillable, e.g.~all of the tight $3$-tori other than the standard one.
More generally, it has recently become clear that overtwistedness is only
the first level in an infinite hierarchy of filling obstructions called
\emph{planar $k$-torsion} for integers $k \ge 0$, cf.~\cite{Wendl:openbook2}.
A contact manifold is overtwisted if and only if it has planar $0$-torsion,
and there are many examples which are tight or have no Giroux
torsion but have planar $k$-torsion for some $k \in \NN$, and are thus
not strongly fillable.  The aforementioned argument of Hofer then generalizes
to define an algebraic filling obstruction \cite{LatschevWendl} that lives in
Symplectic Field Theory and sometimes also gives obstructions to exact 
cobordisms from $k$-torsion to
$(k-1)$-torsion.  Our first main result says that no such
obstructions exist for non-exact cobordisms, thus giving a large class of
contact $3$-manifolds for which the answer to Question~\ref{question:cob}
is yes.

\begin{thm}
\label{thm:overtwisted}
Every closed contact $3$-manifold with planar torsion admits a 
(non-exact) symplectic cobordism to an overtwisted contact manifold.
\end{thm}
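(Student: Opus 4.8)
The plan is to exploit the generalized symplectic handles announced in the introduction: attach one of them along the pre-Lagrangian interface tori of a planar torsion domain inside $(M,\xi)$, thereby converting that domain into an overtwisted --- equivalently, planar $0$-torsion --- piece.

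Recall first what the hypothesis provides: $(M,\xi)$ contains a planar $k$-torsion domain $M_0$ for some integer $k \ge 0$. If $k = 0$ then $(M,\xi)$ is already overtwisted and there is essentially nothing to prove, so assume $k \ge 1$. Such a domain carries a supporting blown-up summed open book whose interface is a disjoint union of pre-Lagrangian tori $T_1, \dots, T_m$ separating a distinguished planar piece $M_0^P \subset M_0$ from the remaining irreducible pieces, and, as part of the data defining a planar torsion domain, $M_0$ comes equipped with standard collar neighborhoods $\uU_{T_j}$ of these tori. One checks that these collars are \emph{sufficiently large} in the sense demanded by the handle attachment theorem of \S\ref{sec:handles}.

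Now, to a trivial (cylindrical) piece over $(M,\xi)$ attach, along its convex boundary, a symplectic handle of the form $\Sigma \times [-1,1] \times S^1$, where $\Sigma$ is a compact planar surface whose boundary circles are identified with the tori $T_1, \dots, T_m$ via the collars $\uU_{T_j}$. By the handle theorem this produces a compact symplectic manifold $(W,\omega)$ which is a strong symplectic cobordism from $(M,\xi)$ to a new closed contact manifold $(M',\xi')$; moreover $(W,\omega)$ is non-exact --- the Liouville form $\iota_Y\omega$ does not extend globally over $W$ --- because the handle carries a symplectic core, a closed symplectic surface in $W$ pairing nontrivially with $[\omega]$. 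Topologically $M'$ is obtained from $M$ by cutting along $T_1,\dots,T_m$ and regluing pieces of $\Sigma\times S^1$; in terms of supporting structures the planar piece $M_0^P$ survives, while the irreducible pieces of $M_0$ on the far side of the interface are replaced by the planar pages supplied by the handle.

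The crux is then to choose $\Sigma$ and the gluing so that the domain created in $M'$ is a planar $0$-torsion domain. The idea is that $k$-torsion with $k \ge 1$ differs from $0$-torsion precisely through the pieces attached to $M_0^P$ across the interface, and that attaching the handle along all of $T_1,\dots,T_m$ caps that side off with planar pages, leaving $M_0^P$ as an \emph{isolated} planar piece whose page and monodromy are those of a standard neighborhood of an overtwisted disk. Since overtwistedness is equivalent to planar $0$-torsion, $(M',\xi')$ is overtwisted and $(W,\omega)$ is the cobordism sought. I expect the main obstacle to lie in this final step: one must verify that the purely local handle attachment near the $T_j$ really does transform the combinatorics of the blown-up summed open book of $M_0$ into that of a $0$-torsion domain --- and not merely some intermediate order $k' < k$ --- and, along the way, that the interface collars of a planar torsion domain genuinely meet the largeness hypothesis of the handle theorem. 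The remaining points --- the orientation and co-orientation bookkeeping, and checking that $\omega$ dominates $\xi$ along $\p_- W = M$ with the correct signs --- are routine once the handle construction of \S\ref{sec:handles} is available.
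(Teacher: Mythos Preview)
Your proposal has a genuine gap: you are attaching the handle along the wrong tori, and as a result the surgery does not produce a planar $0$-torsion domain.

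Recall that in a planar $k$-torsion domain the planar piece $M_0^P$ has pages with $k+1$ boundary components, and the definition requires $\partial M_0^P \ne \emptyset$. You propose to attach a $\Sigma$-decoupling handle along \emph{all} of the interface tori $T_1,\dots,T_m$ in $\partial M_0^P$. But this surgery removes $\partial M_0^P$ entirely: the component of $M'$ containing the old planar piece now has empty interface and boundary, so it cannot be a planar torsion domain of any order. Depending on how many of the $k+1$ page-boundary components actually lay on $\partial M_0^P$, the new pages there are either planar surfaces with several boundary components (an ordinary open book, not a torsion domain) or closed spheres (the ``flat'' component $M'_{\text{flat}}$ of Theorem~\ref{thm:roundHandles}, again not overtwisted). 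Your claim that $M_0^P$ becomes ``a standard neighborhood of an overtwisted disk'' is therefore unfounded: overtwistedness via planar $0$-torsion requires the disk-paged planar piece to remain attached to a padding with non-disk pages, and you have just severed that attachment.

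The paper's argument (Proposition~\ref{prop:kTok-1}) does the opposite: it performs a $\DD$-capping or $\DD$-decoupling surgery along a binding circle or interface torus corresponding to one of the \emph{other} $k$ boundary components of the planar page, leaving $\partial M_0^P$ intact. This reduces $k$ by~$1$ or~$2$ while preserving the planar torsion structure, and one iterates until $k=0$. Two further points: the handle attachment of Theorem~\ref{thm:roundHandles} yields in general only a \emph{weak} cobordism, and the paper then uses an additional trick (attaching Weinstein $2$-handles along Legendrians disjoint from an overtwisted disk until the positive boundary is a rational homology sphere) to upgrade it to a strong one; and the symplectic core $\widehat{\mathcal{K}}_\Sigma$ is not closed but has boundary $-\widehat{B}_0 \subset M$, so your argument for non-exactness as stated is incorrect, though the conclusion is right (see Remark~\ref{remark:notExact}).
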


This of course yields a new and comparatively low-tech proof of the fact,
proved first in \cite{Wendl:openbook2},
that planar torsion obstructs strong fillings.  It also generalizes a result 
proved by David Gay in \cite{Gay:GirouxTorsion}, that any contact manifold 
with Giroux torsion at least~$2$ is cobordant to something overtwisted;
as shown in \cite{Wendl:openbook2}, positive Giroux torsion implies
planar $1$-torsion (cf.~\S\ref{sec:Gay}).  
By a result of Etnyre and Honda \cite{EtnyreHonda:cobordisms}, every
connected overtwisted contact manifold admits a connected
Stein cobordism to any other
connected contact $3$-manifold, and Gay \cite{Gay:GirouxTorsion} showed
that the word ``connected'' can be removed from this statement at the
cost of dropping the Stein condition.  We thus have the following consequence:

\begin{cor}
\label{cor:equivalence}
Every closed connected contact $3$-manifold with planar torsion admits a
connected strong symplectic cobordism to every other 
closed contact $3$-manifold.
\end{cor}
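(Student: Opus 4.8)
The strategy is to chain together two cobordisms using transitivity of $\cob$. First I would apply Theorem~\ref{thm:overtwisted} to the contact manifold $(M,\xi)$ with planar torsion: this produces a (non-exact) symplectic cobordism
$$
(M,\xi) \cob (M_\OT,\xi_\OT)
$$
for some overtwisted $(M_\OT,\xi_\OT)$. Note that the construction underlying Theorem~\ref{thm:overtwisted} via symplectic handle attachment produces a \emph{connected} overtwisted target (attaching a handle of the form $\Sigma \times \DD$ or $\Sigma \times [-1,1] \times S^1$ does not disconnect, and one may arrange connectedness by first taking a connected handle cobordism), so I would record that $(M_\OT,\xi_\OT)$ may be assumed connected.

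Next, for any target contact $3$-manifold $(M',\xi')$ whatsoever, I would invoke the result of Etnyre and Honda~\cite{EtnyreHonda:cobordisms} together with its extension by Gay~\cite{Gay:GirouxTorsion}: a connected overtwisted contact $3$-manifold admits a (Stein, hence in particular symplectic) cobordism to every other contact $3$-manifold, and Gay's refinement removes the hypothesis that the latter be connected. This yields
$$
(M_\OT,\xi_\OT) \cob (M',\xi').
$$
Since $\cob$ is a preorder --- in particular transitive, by the standard gluing of strong symplectic cobordisms along contactomorphic boundary components recalled in \S\ref{sec:results} --- composing the two cobordisms gives $(M,\xi) \cob (M',\xi')$, which is the assertion.

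The only genuine subtlety, and the step I would be most careful about, is the \textbf{connectedness} of the intermediate overtwisted manifold: Etnyre--Honda's theorem requires the source to be connected, so one must check that the overtwisted manifold produced by Theorem~\ref{thm:overtwisted} (or a modification thereof) is connected. If the handle construction naturally outputs a possibly disconnected $(M_\OT,\xi_\OT)$, one repairs this either by noting that Gay's techniques in fact handle disconnected overtwisted sources as well, or by prepending a further cobordism that connects the components --- e.g.\ a sequence of $1$-handle attachments, or a single connected symplectic cobordism from a disjoint union of overtwisted pieces to a connected overtwisted manifold, which exists because overtwistedness is preserved and connected sums of contact manifolds are realized by symplectic cobordisms. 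Everything else is a formal consequence of transitivity.
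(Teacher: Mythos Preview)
Your proposal is correct and follows essentially the same route as the paper: apply Theorem~\ref{thm:overtwisted} to reach an overtwisted manifold, then invoke Etnyre--Honda together with Gay's extension to reach any target, and compose using transitivity of~$\cob$.

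One small correction: your assertion that the handle attachments underlying Theorem~\ref{thm:overtwisted} necessarily produce a \emph{connected} overtwisted target is not justified---a $\DD$-decoupling surgery along an interface torus can disconnect the manifold (this is visible already in Proposition~\ref{prop:kTok-1}, which only guarantees that \emph{some connected component} of the result carries lower-order torsion). Fortunately you correctly anticipate this and point to the right fix: Gay's result in~\cite{Gay:GirouxTorsion} removes the connectedness hypothesis entirely (on both source and target, not just the target as your phrasing suggests), so no repair via $1$-handles or connect sums is actually needed. The paper handles this in exactly the same way, simply citing Gay to dispense with connectedness in one stroke rather than arguing for it.
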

It should be emphasized that due to the obstructions mentioned above,
Corollary~\ref{cor:equivalence} is not true
for \emph{exact} cobordisms, not even if the positive boundary is required
to be connected.  In fact, there is no known example of an exact
cobordism from anything tight to anything overtwisted, and many examples
that are tight but non-fillable (e.g.~the $3$-tori with positive Giroux 
torsion) certainly do not admit such cobordisms.

There is also a version of Theorem~\ref{thm:overtwisted} that implies the
more general obstruction to weak fillings proved in \cite{NiederkruegerWendl}.
Recall (see Definitions~\ref{defn:planar} and~\ref{defn:planarTorsion})
that for a given closed $2$-form~$\Omega$ on a contact $3$-manifold
$(M,\xi)$, we say that $(M,\xi)$ has \emph{$\Omega$-separating} planar
torsion if it contains a planar torsion domain in which a certain set
of embedded $2$-tori~$T$ all satisfy
$$
\int_T \Omega = 0.
$$
If this is true for all closed $2$-forms~$\Omega$, then $(M,\xi)$ is said
to have \emph{fully separating} planar torsion.

\begin{thm}
\label{thm:weak}
Suppose $(M,\xi)$ is a closed contact $3$-manifold with 
$\Omega$-separating planar torsion for some closed $2$-form $\Omega$ on~$M$
with $\Omega|_\xi > 0$.  Then there exists a weak symplectic cobordism
$(W,\omega)$ from $(M,\xi)$ to an overtwisted contact manifold,
with $\omega|_{TM} = \Omega$.
\end{thm}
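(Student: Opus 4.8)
The plan is to run the construction behind Theorem~\ref{thm:overtwisted} essentially verbatim, prefixing it with one deformation step that replaces the strong collar of $(M,\xi)$ by a ``weak'' collar carrying the prescribed $2$-form.  Recall that the cobordism of Theorem~\ref{thm:overtwisted} is built by attaching the symplectic handles of \S\ref{sec:handles} to a trivial collar $[0,1]\times M$ along finitely many pieces of $\{1\}\times M$: each such piece is either a tubular neighborhood $\nN(L_j)$ of a transverse link (for a $\Sigma\times\DD$ handle) or a thickened-torus neighborhood $\nN(T_i)$ of a pre-Lagrangian torus $T_i$ (for a $\Sigma\times[-1,1]\times S^1$ handle), the $T_i$ being exactly the tori of the planar torsion domain over which the $\Omega$-separating condition is imposed; the complementary part of $\{1\}\times M$ is left untouched and becomes part of the overtwisted positive boundary.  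The only property of the collar that the handle attachment uses is that $\omega$ has standard symplectization form near each $\nN(L_j)$ and each $\nN(T_i)$, for a suitable contact form $\alpha$ for $\xi$ that makes the $T_i$ (and the $L_j$) sit inside sufficiently large neighborhoods.  Thus it suffices to produce a symplectic form on $[0,1]\times M$ that restricts to $\Omega$ on $\{0\}\times M$, weakly dominates $\xi$ on every slice, and is of standard symplectization form near $\{1\}\times\nN(L_j)$ and $\{1\}\times\nN(T_i)$.

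This is where the $\Omega$-separating hypothesis enters.  Each $\nN(L_j)$ is a disjoint union of solid tori, on which every closed $2$-form is exact; and $\int_{T_i}\Omega=0$ forces $\Omega$ to be exact on each thickened-torus neighborhood $\nN(T_i)$ as well.  Fix a contact form $\alpha$ for $\xi$ as above together with primitives $\Omega=d\beta$ on these neighborhoods, and set $\omega:=\Omega+d\bigl(f(t)\,\alpha\bigr)$ on $[0,1]\times M$, where $f\colon[0,1]\to[0,\infty)$ is smooth, increasing, vanishes near $t=0$, and is very large near $t=1$.  Since $\Omega|_\xi>0$, a one-line computation gives $\omega\wedge\omega=2f'(t)\,dt\wedge\alpha\wedge\bigl(\Omega+f(t)\,d\alpha\bigr)>0$, so $\omega$ is symplectic; it restricts to $\Omega$ on $\{0\}\times M$; and every slice is weakly dominated because $\omega|_{\{t\}\times M}=\Omega+f(t)\,d\alpha$ restricts positively to $\xi$.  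Over the attaching neighborhoods $\omega=d\bigl(\beta+f(t)\alpha\bigr)$, and for $f(1)$ large enough $\beta+f(1)\alpha$ is a contact form for $\xi$, so each slice $\{t\}\times\nN(T_i)$ (resp.\ $\{t\}\times\nN(L_j)$) with $t$ near $1$ is a contact-type hypersurface for $\omega$; hence near $\{1\}$ the pair $\bigl([t_0,1]\times\nN(T_i),\omega\bigr)$ is a symplectization collar of $\bigl(\nN(T_i),\ker(\beta+f(1)\alpha)\bigr)$, and after a constant rescaling this matches the contact form and the large-neighborhood data demanded by the handle of \S\ref{sec:handles}.

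With this collar in place, glue on the handles of \S\ref{sec:handles} exactly as in the proof of Theorem~\ref{thm:overtwisted}.  The resulting compact $(W,\omega)$ has $\p W=(M_\OT,\xi_\OT)\sqcup(-M,\xi)$, with $(M_\OT,\xi_\OT)$ overtwisted for the same reason as there.  By construction $\omega|_{TM}=\Omega$, and hence $\omega|_\xi>0$; moreover $\omega|_{\xi_\OT}>0$, since $\xi_\OT$ is strongly dominated by $\omega$ along the handle boundaries and agrees with $\xi$ on the untouched part $M\smallsetminus\bigcup_i\nN(T_i)\smallsetminus\bigcup_j\nN(L_j)$, where $\omega|_{\{1\}\times M}=\Omega+f(1)\,d\alpha$ restricts positively to $\xi$.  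Thus $(W,\omega)$ is a weak symplectic cobordism from $(M,\xi)$ to an overtwisted contact manifold with $\omega|_{TM}=\Omega$.

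The main obstacle is the middle step: one must carry out the normalization near the attaching regions so that it simultaneously matches the specific contact form and the ``sufficiently large neighborhood'' condition required by the handle construction of \S\ref{sec:handles}, keeps $\omega$ symplectic and $\xi$-dominating across the interpolation between the global formula $\Omega+d(f(t)\alpha)$ and the local expression $d(\beta+f(t)\alpha)$, and does not perturb the collar near $\{0\}\times M$ where $\omega$ must equal $\Omega$ on the nose.  Everything else—including the verification that the positive boundary is overtwisted, and the gluing of the handles themselves—is inherited unchanged from the proof of Theorem~\ref{thm:overtwisted}.
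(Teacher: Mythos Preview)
Your approach is correct in substance and uses the same ingredients as the paper, but the logical dependency is reversed. In the paper, Theorem~\ref{thm:weak} is proved \emph{directly}: the handle attachment Theorems~\ref{thm:2handles} and~\ref{thm:roundHandles} are already formulated for arbitrary symplectic forms $\omega$ on $[0,1]\times M$ with $\omega|_\xi>0$, and the deformation you describe (your ``middle step'') is precisely what is carried out in \S\ref{sec:deformation} (Proposition~\ref{prop:deformation}) as part of those theorems. One then iterates via Proposition~\ref{prop:kTok-1}, checking at each stage that the $\Omega'$-separating condition persists on the new positive boundary, until planar $0$-torsion is reached. Theorem~\ref{thm:overtwisted} is afterwards obtained as the special case $\Omega=d\lambda$, together with an additional trick (Legendrian surgeries to kill $H_1$) to upgrade the weak cobordism to a strong one. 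So rather than reducing Theorem~\ref{thm:weak} to Theorem~\ref{thm:overtwisted} plus a collar deformation, the paper does the opposite; your extra deformation step is not needed as a separate argument because it is already absorbed into the handle theorems. One small point to watch: the construction is genuinely iterative (one $\DD$-capping or $\DD$-decoupling at a time, with the separating condition re-verified after each step), not a single batch of handles attached to one collar as your phrasing suggests.
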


Using a Darboux-type normal form near the boundary, weak
symplectic cobordisms can be glued together along contactomorphic
boundary components of opposite sign whenever the restrictions of the symplectic
forms on the boundaries match (see Lemma~\ref{lemma:gluing}).
Thus if $(M,\xi)$ has $\Omega$-separating planar $k$-torsion and
admits a weak filling $(W,\omega)$ with $[\omega|_{TM}] = [\Omega] \in
H^2_\dR(M)$, then Theorem~\ref{thm:weak} yields a weak filling of an 
overtwisted contact manifold, and hence a
contradiction due to the well known theorem of Gromov \cite{Gromov} and
Eliashberg \cite{Eliashberg:diskFilling}.  We thus obtain a much
simplified proof of the following result, which was proved 
in \cite{NiederkruegerWendl} by a direct holomorphic curve argument and
also follows from a computation of the twisted ECH contact invariant
in \cite{Wendl:openbook2}.

\begin{coru}[\cite{NiederkruegerWendl}]
If $(M,\xi)$ has $\Omega$-separating planar torsion for some closed
$2$-form~$\Omega$ on~$M$, then it does not admit any weak filling 
$(W,\omega)$ with $[\omega|_{TM}] = [\Omega] \in H^2_\dR(M)$.  In particular,
if $(M,\xi)$ has fully separating planar torsion then it is not weakly
fillable.
\end{coru}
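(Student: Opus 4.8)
The plan is to deduce this corollary from Theorem~\ref{thm:weak} by a standard gluing argument, exactly as sketched in the paragraph immediately preceding the statement. Suppose, for contradiction, that $(M,\xi)$ has $\Omega$-separating planar torsion for some closed $2$-form~$\Omega$ on~$M$, and that $(W_0,\omega_0)$ is a weak filling with $[\omega_0|_{TM}] = [\Omega] \in H^2_\dR(M)$. First I would observe that the hypothesis of Theorem~\ref{thm:weak} requires $\Omega|_\xi > 0$, which is not part of the assumption here; but since $[\omega_0|_{TM}] = [\Omega]$ in de Rham cohomology, we may write $\Omega = \omega_0|_{TM} + d\alpha$ for some $1$-form~$\alpha$ on~$M$, and then $\Omega' := \omega_0|_{TM} + d(t\alpha)$ interpolates; more simply, replacing $\Omega$ by the cohomologous form $\omega_0|_{TM}$ (which does dominate $\xi$) does not affect the property of having $\Omega$-separating planar torsion, since that property depends only on the integrals $\int_T\Omega$ over the finitely many tori~$T$ in a planar torsion domain, and those integrals are cohomological. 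So without loss of generality $\Omega = \omega_0|_{TM}$, in particular $\Omega|_\xi > 0$.

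Next I would apply Theorem~\ref{thm:weak} to obtain a weak symplectic cobordism $(W_1,\omega_1)$ from $(M,\xi)$ to an overtwisted contact manifold $(M_\OT,\xi_\OT)$ with $\omega_1|_{TM} = \Omega = \omega_0|_{TM}$. Then I would glue $(W_0,\omega_0)$ to $(W_1,\omega_1)$ along their common boundary component $(M,\xi)$ — which appears as the positive boundary of $W_0$ and the negative boundary of $W_1$ — using the gluing construction of Lemma~\ref{lemma:gluing}; the hypothesis of that lemma, that the restrictions of the two symplectic forms to the boundary agree (after a contactomorphism), is exactly the condition $\omega_1|_{TM} = \omega_0|_{TM}$ arranged above. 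The result is a weak filling $(W,\omega)$ of $(M_\OT,\xi_\OT)$, i.e.\ a weak symplectic cobordism from $\emptyset$ to an overtwisted contact $3$-manifold. This contradicts the Gromov--Eliashberg theorem \cite{Gromov,Eliashberg:diskFilling} that overtwisted contact $3$-manifolds are not weakly fillable, completing the proof of the first assertion. The final sentence is then immediate: if $(M,\xi)$ has fully separating planar torsion, then in particular it has $\Omega$-separating planar torsion for $\Omega := \omega_0|_{TM}$ for any hypothetical weak filling $(W_0,\omega_0)$, so no such filling exists.

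The only genuinely substantive input here is Theorem~\ref{thm:weak}, whose proof via the handle construction is the heart of the paper; everything else is bookkeeping. The main point requiring a little care — and the step I would flag as the one most likely to hide a subtlety — is the cohomological normalization in the first paragraph, namely checking that the property of having $\Omega$-separating planar torsion genuinely depends only on the cohomology class $[\Omega]$, so that one is free to replace $\Omega$ by $\omega_0|_{TM}$ in order to meet the hypothesis $\Omega|_\xi > 0$ of Theorem~\ref{thm:weak}. This follows at once from Definition~\ref{defn:planarTorsion}, since the defining conditions only constrain the numbers $\int_T \Omega$ for $T$ ranging over the finite collection of tori in a fixed planar torsion domain, and these periods are cohomological invariants; I would simply remark on this rather than belabor it.
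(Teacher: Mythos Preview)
Your proposal is correct and follows exactly the approach the paper itself sketches in the paragraph immediately preceding the corollary: glue a hypothetical weak filling to the cobordism from Theorem~\ref{thm:weak} along~$M$ using Lemma~\ref{lemma:gluing}, obtaining a weak filling of something overtwisted and contradicting Gromov--Eliashberg. Your observation that one must first replace~$\Omega$ by the cohomologous form $\omega_0|_{TM}$ to meet the hypothesis $\Omega|_\xi > 0$ of Theorem~\ref{thm:weak}, and that this is harmless because the $\Omega$-separating condition depends only on periods $\int_T\Omega$, is a point the paper glosses over but which you are right to make explicit.
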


We now state some related results that also apply to fillable contact
manifolds.  The aforementioned existence result of 
\cite{EtnyreHonda:cobordisms} for symplectic caps was generalized 
independently by Eliashberg \cite{Eliashberg:cap} and Etnyre 
\cite{Etnyre:fillings} to weak cobordisms: they showed namely that
for any $(M,\xi)$ with a closed $2$-form
$\Omega$ that dominates~$\xi$, there is a symplectic cap $(W,\omega)$ with
$\p W = -M$ and $\omega|_{TM} = \Omega$.  Our next result concerns a
large class of contact manifolds for which this cap may be assumed to
have a certain very restrictive property.

\begin{thm}
\label{thm:caps}
Suppose $(M,\xi)$ is a contact $3$-manifold containing an
$\Omega$-sepa\-ra\-ting partially planar
domain $M_0 \subset M$ (see Definition~\ref{defn:planar}) for
some closed $2$-form $\Omega$ on~$M$ with $\Omega|_\xi > 0$.
Then $(M,\xi)$ admits a symplectic cap $(W,\omega)$
such that $\omega|_{TM} = \Omega$ and there exists a symplectically
embedded $2$-sphere $S \subset W$ with vanishing self-intersection number.
\end{thm}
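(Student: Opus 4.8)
The plan is to build the cap in three stages. First, attach generalized symplectic handles of the two types introduced in~\S\ref{sec:handles} to $(M,\xi)$, producing a weak symplectic cobordism $(W_1,\omega_1)$ from $(M,\xi)$ to a contact manifold $(M',\xi')$ with $\omega_1|_{TM}=\Omega$, in such a way that the part of $(M',\xi')$ built from the planar piece of $M_0$ is a connected sum of copies of the standard tight $(S^1\times S^2,\xi_0)$. Second, cap that part with a weak cap assembled from copies of $\DD\times S^2$, inside each of which the fiber $\{0\}\times S^2$ is a symplectically embedded sphere of self-intersection zero. Third, cap the remaining part of $(M',\xi')$ with the weak cap of Eliashberg~\cite{Eliashberg:cap} and Etnyre~\cite{Etnyre:fillings}, and glue all the pieces together along contactomorphic boundaries via the weak-gluing Lemma~\ref{lemma:gluing}.

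For the first stage I would unpack Definition~\ref{defn:planar}: the $\Omega$-separating partially planar domain $M_0\subset M$ has a planar piece $M_0^P$ whose interior pages are genus-zero surfaces $\Sigma$, with binding a transverse link $B\subset M_0^P$, and $M_0^P$ is glued to $M\setminus M_0^P$ along a finite family of pre-Lagrangian tori. The $\Omega$-separating hypothesis says exactly that $\int_T\Omega=0$ for each torus $T$ in the distinguished family, namely the boundary tori of tubular neighborhoods of $B$ and the interface tori; this is precisely the period condition under which the handles of~\S\ref{sec:handles} can be attached in the weak category along \emph{sufficiently large} neighborhoods, keeping the symplectic form equal to $\Omega$ on $M$. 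So I would attach one handle of the form $\Sigma\times\DD$ along a large neighborhood of $B$ and one handle of the form $\Sigma\times[-1,1]\times S^1$ along a large neighborhood of each interface torus. Since all of this happens over the convex end, the concave boundary remains $(M,\xi)$ with $\omega_1|_{TM}=\Omega$ unchanged.

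Next I would identify $(M',\xi')$ using the analysis of~\S\ref{sec:handles}: on the convex side the handles attached along the interface tori cut $M$ along those tori and fill in, while the handle along $B$ fills in the binding. Because the pages of $M_0^P$ are planar, the resulting component of $(M',\xi')$ built from $M_0^P$ is a connected sum $\#^k(S^1\times S^2,\xi_0)$ for some $k\geq 1$ (the partially planar structure guarantees at least one such factor), so that $(M',\xi')=\#^k(S^1\times S^2,\xi_0)\sqcup(N,\zeta)$, where $(N,\zeta)$ carries the dominating closed $2$-form $\Omega_N:=\omega_1|_{TN}$ with vanishing periods over the relevant tori. Now $\#^k(S^1\times S^2,\xi_0)$ is the concave boundary of a weak cap diffeomorphic to $\natural^k(\DD\times S^2)$, built from $k$ copies of the standard $\DD\times S^2$, and in the $j$-th copy the fiber $S_j:=\{0\}\times S^2$ is a symplectic sphere with trivial normal bundle, so $S_j\cdot S_j=0$; meanwhile $(N,\zeta)$ admits a weak cap inducing $\Omega_N$ on its boundary by~\cite{Eliashberg:cap,Etnyre:fillings}. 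Gluing these caps onto the convex end of $(W_1,\omega_1)$ via Lemma~\ref{lemma:gluing} — legitimate because the relevant boundary forms agree after rescaling, again thanks to the vanishing of the periods of $\Omega$ — produces a symplectic cap $(W,\omega)$ of $(M,\xi)$ with $\omega|_{TM}=\Omega$, and one takes $S:=S_1$.

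The main obstacle is the handle step. One must verify that the \emph{sufficiently large neighborhood} hypotheses of the attachment theorems of~\S\ref{sec:handles} are actually met — this is exactly where the $\Omega$-separating condition enters, by forcing the period of $\Omega$ over each relevant torus to vanish — and that the attachment can be arranged so as to preserve $\omega|_{TM}=\Omega$ and to yield the stated model $\#^k(S^1\times S^2,\xi_0)\sqcup(N,\zeta)$ for the convex boundary. Everything else — the model cap $\DD\times S^2$, the Eliashberg--Etnyre cap, and the weak-gluing lemma — is essentially off the shelf.
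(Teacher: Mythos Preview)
Your overall strategy coincides with the paper's: attach capping and decoupling handles along all binding circles and interface tori in the planar piece, then cap the resulting positive boundary, finding the symplectic $0$-sphere inside a $\DD \times S^2$ piece. However, the execution contains a genuine confusion that you should fix.

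First, the handle surfaces: in the constructions of \S\ref{sec:handles} the symbol $\Sigma$ denotes an \emph{auxiliary} surface that gets glued onto the pages, not the page itself. Using the page as~$\Sigma$ is incoherent (the number of boundary components of the handle surface must equal the number of binding components or interface tori being treated, which in general differs from the number of boundary components of the page). The paper simply takes $\Sigma = \DD$ throughout: one $\DD$-capping cobordism per binding circle and one $\DD$-decoupling cobordism per interface torus in~$M_0^P$.

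Second, and more seriously, your identification of the positive boundary is wrong. After removing \emph{all} binding and interface from the planar piece, the pages there become \emph{closed} spheres, so the new blown up summed open book has $B \cup \iI \cup \p M = \emptyset$ on that component. By the discussion preceding Theorem~\ref{thm:roundHandles}, this component is \emph{not} a contact boundary at all: it is the flat piece $M'_{\text{flat}}$, a symplectic $S^2$-fibration over~$S^1$, and no contact structure $\xi'$ is produced there (a supported contact structure exists only when the pages are not closed). Your claimed identification with the tight contact manifold $\#^k(S^1 \times S^2,\xi_0)$ is therefore incorrect, and invoking Lemma~\ref{lemma:gluing} to attach a contact cap there does not apply. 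The correct move---and this is what the paper does---is to cap $M'_{\text{flat}} \cong S^1 \times S^2$ by the symplectic fibration $\DD \times S^2 \to \DD$ exactly as in step~(2) of Eliashberg's construction (cf.\ \S\ref{sec:GromovEliashberg} and the statement of Theorem~\ref{thm:roundHandles}); the fiber $\{*\} \times S^2$ then furnishes the desired symplectic sphere with trivial normal bundle. The remaining component $M'_{\text{convex}}$ is genuinely contact and is capped via \cite{Eliashberg:cap} or \cite{Etnyre:fillings} as you say.

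A minor point: the $\Omega$-separating condition (Definition~\ref{defn:planar}) concerns only the \emph{interface} tori of~$M_0$ lying in~$M_0^P$. No homological condition is needed for the $\DD$-capping cobordisms along binding circles, since $\omega$ is automatically exact near a binding component (cf.\ the remark following Theorem~\ref{thm:roundHandles}).
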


As the work of McDuff \cite{McDuff:rationalRuled} makes clear, symplectic
manifolds that contain symplectic spheres of square~$0$ are quite special, and
for instance any closed symplectic manifold
obtained by gluing the cap from Theorem~\ref{thm:caps} to a filling
of $(M,\xi)$ must be rational or ruled.  An easy adaptation of the
main result in \cite{AlbersBramhamWendl} also provides the following
consequence, which was proved using much harder punctured holomorphic
curve arguments in \cites{Wendl:openbook2,NiederkruegerWendl}:

\begin{cor}
\label{cor:separating}
Suppose $(M,\xi)$ contains an $\Omega$-separating partially planar domain
for some closed $2$-form $\Omega$ on~$M$.  If $(W,\omega)$ is a closed
symplectic $4$-manifold and $M$ admits an embedding 
$\iota : M \hookrightarrow W$ such that $\iota^*\omega|_\xi > 0$ and
$[\iota^*\omega] = [\Omega] \in H^2_\dR(M)$, then~$\iota(M)$ 
separates~$W$.
\end{cor}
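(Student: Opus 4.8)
The plan is to argue by contradiction using Theorem~\ref{thm:caps} together with the standard cut-and-paste technology for weak symplectic cobordisms. Suppose $\iota : M \hookrightarrow W$ is an embedding into a closed symplectic $4$-manifold $(W,\omega)$ with $\iota^*\omega|_\xi > 0$ and $[\iota^*\omega] = [\Omega]$, but that $\iota(M)$ does \emph{not} separate~$W$. Write $\Omega' := \iota^*\omega$, a closed $2$-form on~$M$ dominating~$\xi$ and cohomologous to~$\Omega$; since the partially planar domain condition only sees the cohomology class (the relevant tori have $\int_T\Omega = 0$ iff $\int_T\Omega' = 0$), $(M,\xi)$ also contains an $\Omega'$-separating partially planar domain, so Theorem~\ref{thm:caps} applies with $\Omega$ replaced by~$\Omega'$ and yields a symplectic cap $(W_{\mathrm{cap}},\omega_{\mathrm{cap}})$ with $\p W_{\mathrm{cap}} = -M$, $\omega_{\mathrm{cap}}|_{TM} = \Omega'$, and a symplectically embedded sphere $S \subset W_{\mathrm{cap}}$ with $S \cdot S = 0$.

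Next I would cut $W$ open along $\iota(M)$. Because $\iota(M)$ is non-separating, doing so produces a \emph{connected} compact symplectic $4$-manifold $(W',\omega')$ whose boundary is $M \sqcup (-M)$, with $\omega'$ restricting to $\Omega'$ on each copy; in other words $(W',\omega')$ is a weak symplectic cobordism from $(M,\xi)$ to itself in the sense of the introduction. Here one should note that a collar/normal-form argument near $\iota(M)$ (the Darboux-type normal form cited before Lemma~\ref{lemma:gluing}) is what allows the cut manifold to be viewed as a genuine weak cobordism with the correct forms on the boundary. Now glue the incoming boundary of $(W',\omega')$ to the concave end created by attaching the convex cap: more precisely, glue $(W_{\mathrm{cap}},\omega_{\mathrm{cap}})$ along $-M$ to the $+M$ boundary of $(W',\omega')$, which is permissible by Lemma~\ref{lemma:gluing} since the boundary restrictions of the two symplectic forms both equal~$\Omega'$. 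The result is a closed symplectic $4$-manifold $(\widehat W,\widehat\omega)$ containing the symplectic $0$-sphere $S$.

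By McDuff's theorem \cite{McDuff:rationalRuled}, a closed symplectic $4$-manifold containing a symplectically embedded sphere of vanishing self-intersection is rational or ruled, and in the ruled case it is an $S^2$-bundle over a surface; in either case $b_2^+(\widehat W) = 1$. To derive the contradiction I would compare this with the topology forced by the non-separating embedding: $\widehat W$ is built from $W'$ (a cut-open copy of~$W$) by capping one end, but the other end is capped by the \emph{same} operation that, run in reverse, recovers $W$ from $W'$ by regluing along $M$ — so one obtains $\widehat W$ from the closed manifold $W$ by removing a copy of $\iota(M) \times (-1,1)$ and inserting $W_{\mathrm{cap}} \cup W_{\mathrm{cap}}$, or more cleanly: $\widehat W$ contains both $S$ and, inherited from the non-separating $\iota(M) \subset W$, a closed oriented surface (a Seifert-type surface for the non-separating hypersurface, or the image of a loop dual to $[\iota(M)]$) giving a class with positive self-pairing independent of~$[S]$, forcing $b_2^+(\widehat W) \ge 2$. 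This contradicts $b_2^+ = 1$. \emph{The main obstacle} is precisely this last homological bookkeeping step — controlling $H_2(\widehat W)$ and $b_2^+$ well enough to see that a non-separating $\iota(M)$ injects a second positive class transverse to $[S]$; the cleanest route is likely to observe that a non-separating hypersurface in $W$ produces, after the cut-and-cap, a symplectic surface or a class of positive square disjoint in homology from the sphere class, so I would set up a Mayer–Vietoris computation for $\widehat W = W' \cup_M (W_{\mathrm{cap}} \sqcup W_{\mathrm{cap}})$ and track the image of the fundamental class of the original non-separating $M$.
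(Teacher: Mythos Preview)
Your overall strategy---invoke Theorem~\ref{thm:caps} to produce a cap containing a symplectic $0$-sphere, cut $W$ open along $\iota(M)$, and glue---is the right shape, but the gluing step is broken and this is a genuine gap, not just bookkeeping.

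When you cut $W$ along the non-separating $\iota(M)$ you get a connected $W'$ with $\partial W' = M \sqcup (-M)$: one copy of $M$ is weakly convex (positive boundary) and the other is weakly concave (negative boundary). The cap $(W_{\mathrm{cap}},\omega_{\mathrm{cap}})$ has $\partial W_{\mathrm{cap}} = -M$, i.e.\ $M$ is its \emph{concave} boundary. You can therefore glue $W_{\mathrm{cap}}$ to the \emph{convex} end of $W'$, but the result $W' \cup_M W_{\mathrm{cap}}$ still has one boundary component left, namely the concave copy of~$M$. It is not closed. Your later formula $\widehat W = W' \cup_M (W_{\mathrm{cap}} \sqcup W_{\mathrm{cap}})$ does not fix this: the second copy of $W_{\mathrm{cap}}$ would have to be glued concave-to-concave, which is not a legal symplectic gluing (Lemma~\ref{lemma:gluing} requires matching a positive boundary to a negative one). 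Since you never obtain a closed symplectic manifold, McDuff's classification and your $b_2^+$ count do not apply.

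The argument the paper points to (the ``easy adaptation'' of \cite{AlbersBramhamWendl}) avoids this problem by never trying to close the manifold up. Instead one stacks infinitely many copies of $W'$ end to end---equivalently, passes to the infinite cyclic cover of~$W$ dual to $[\iota(M)]$---and then attaches $W_{\mathrm{cap}}$ at one end. This yields a non-compact, geometrically bounded symplectic manifold containing the $0$-sphere~$S$. One then runs McDuff's moduli-space argument directly: the $J$-holomorphic spheres homologous to~$S$ have a uniform energy bound, hence (by geometric boundedness) remain in a compact region and foliate it, yet the manifold is non-compact with no boundary for the foliation to terminate on. That is the contradiction; there is no need for, and no way to arrange, a closed $\widehat W$.
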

Since planar torsion domains are also partially planar domains, this implies
that planar torsion is actually an obstruction to contact type embeddings
into closed symplectic manifolds, not just symplectic fillings.

Some examples of contact manifolds admitting non-separating embeddings
arise from special types of symplectic fillings: we shall say that
$(M,\xi)$ is (strongly or weakly) \defin{co-fillable} if there is a
connected (strong or weak) filling $(W,\omega)$ whose boundary is the
disjoint union of $(M,\xi)$ with an arbitrary non-empty contact manifold.
Put another way, $(M,\xi)$ admits a connected \defin{semi-filling} 
with disconnected boundary.
Given such a filling, one can always attach a symplectic $1$-handle to
connect distinct boundary components and then cap off the boundary 
to realize $(M,\xi)$
as a non-separating contact hypersurface.  Various examples of
contact manifolds that are or are not co-fillable have been
known for many years:
\begin{itemize}
\setlength{\itemsep}{0cm}
\item The tight $3$-sphere $(S^3,\xi_0)$ is not weakly co-fillable,
by arguments due to Gromov \cite{Gromov}, 
Eliashberg \cite{Eliashberg:diskFilling} and McDuff \cite{McDuff:boundaries}.
Etnyre \cite{Etnyre:planar} extended this result to all planar contact
manifolds.
\item McDuff \cite{McDuff:boundaries} showed that for any Riemann surface
$\Sigma$ of genus at least~$2$, the unit cotangent bundle $S T^*\Sigma$
with its canonical contact structure is strongly co-fillable.  Further
examples were found by Geiges \cite{Geiges:disconnected} and
Mitsumatsu \cite{Mitsumatsu:Anosov}.
\item Giroux \cite{Giroux:plusOuMoins} showed that every tight contact
structure on~$T^3$ is weakly co-fillable.  However, none of them are
strongly co-fillable, due to a result of the author \cite{Wendl:fillable}.
\end{itemize}
All of the negative results just mentioned can be viewed as special cases
of Corollary~\ref{cor:separating}, and so can the closely related
result in \cite{AlbersBramhamWendl}, that partially planar contact manifolds
never admit non-separating contact type embeddings.  
Observe that any contact manifold cobordant to one for which
Corollary~\ref{cor:separating} holds also cannot be co-fillable:
in particular this shows that not every contact $3$-manifold is cobordant
to $(S^3,\xi_0)$.  We are
thus led to an analogue of Question~\ref{question:cob} that also applies to
fillable contact manifolds:

\begin{question}
\label{question:S3}
Does every closed and connected contact $3$-manifold $(M,\xi)$ that is not 
strongly co-fillable satisfy $(M,\xi) \cob (S^3,\xi_0)$?
\end{question}

To the author's knowledge, this question is open.
The answer is again clearly no for exact cobordisms, as a
variation on Hofer's argument from \cite{Hofer:weinstein} also shows that
$(M,\xi)$ must always admit contractible Reeb orbits if
$(M,\xi) \ecob (S^3,\xi_0)$.
The following result provides some evidence for a positive answer in
the non-exact case, though
it is not quite as general as one might have hoped.  (See also
Remark~\ref{remark:counterexamples?} below for a candidate counterexample.)

\begin{thm}
\label{thm:toS3}
Suppose $(M,\xi)$ is a connected contact $3$-manifold containing a partially
planar domain which either has more than one irreducible subdomain or
has nonempty binding.  Then $(M,\xi) \cob (S^3,\xi_0)$, hence $(M,\xi)$ 
admits a connected strong symplectic cobordism to every connected strongly
fillable contact $3$-manifold.
\end{thm}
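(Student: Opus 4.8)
The plan is to realize $(S^3,\xi_0)$ — or more conveniently a small Darboux ball in it — as the outcome of attaching one of the generalized symplectic handles of the form $\Sigma \times \DD$ or $\Sigma \times [-1,1] \times S^1$ (together with a standard symplectic $1$-handle to restore connectedness) to $(M,\xi)$ along the binding link or the pre-Lagrangian tori sitting inside the partially planar domain $M_0 \subset M$. The key point is that a partially planar domain is, by Definition~\ref{defn:planar}, a union of $S^1$-invariant pieces fibering over planar surfaces, glued along pre-Lagrangian tori, and with binding a transverse link whose pages have genus~$0$; in either of the two hypothesized cases (more than one irreducible subdomain, or nonempty binding) one has enough room to attach a handle whose core surgery \emph{replaces} a planar piece of the supporting open book by a disk, and thus collapses the corresponding part of $M$ to a standard contact $S^3$ supported by the disk open book. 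First I would recall from \S\ref{sec:handles} the precise neighborhood hypotheses under which the $\Sigma\times\DD$ and $\Sigma\times[-1,1]\times S^1$ handles can be attached, and observe that a sufficiently large neighborhood of a binding component (respectively of a pre-Lagrangian torus separating two irreducible subdomains) inside a partially planar domain always satisfies them — this is essentially built into the definition of these domains, whose defining data were chosen to be compatible with large tubular neighborhoods.

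Second, I would carry out the attachment itself. In the \emph{nonempty binding} case: pick a binding component $B$ of the supporting summed open book of $M_0$; its pages are planar, and after attaching a $\Sigma\times\DD$ handle with $\Sigma$ equal to (a subsurface of) the page, one fills in the page with a disk, turning that local open book into the disk-page open book on $S^3$. The handle has a symplectic core, so the resulting cobordism $(W_1,\omega)$ has positive boundary consisting of $(S^3,\xi_0)$ together with whatever is left of the rest of $M$; attaching a standard Weinstein $1$-handle connecting these boundary components and then noting that the remaining non-$S^3$ part is itself capped (or can simply be discarded by a further cap, but we want the $S^3$ in the positive boundary) gives a \emph{connected} cobordism whose positive boundary is $(S^3,\xi_0)$. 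In the \emph{more than one irreducible subdomain} case: one instead attaches a $\Sigma\times[-1,1]\times S^1$ handle along the separating pre-Lagrangian torus $T$; because the two sides are genuinely different irreducible pieces, the round handle attachment along $T$ performs the analogue of the torus surgery that, combined with the planarity on one side, again produces an $S^3$ summand — here I would mirror the bookkeeping in the proof of Theorem~\ref{thm:overtwisted}, where the very same handles produce an overtwisted $S^1\times S^2$ piece; the difference is purely in which model piece of the open book we glue the handle's core onto.

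Third, connectedness and the contact structure on the new boundary need to be checked: the handles are attached to a connected $(M,\xi)$, so $W$ minus the extra $1$-handle is automatically connected, and the $1$-handle only helps; the induced contact structure on the relevant boundary component is, by the explicit handle models of \S\ref{sec:handles}, exactly the one supported by the disk (resp.\ annulus-glued-to-disk) open book, which is $\xi_0$ on $S^3$ by Eliashberg's uniqueness of the tight $S^3$ via its planar open book. Finally, composing with a Stein cobordism from $(S^3,\xi_0)$ to any connected strongly fillable contact $3$-manifold — which exists by \cite{EtnyreHonda:cobordisms} since $(S^3,\xi_0)$ is the convex boundary of $B^4$, so one may cap and refill, or more directly since any strong filling of the target can be turned into a cobordism from $(S^3,\xi_0)$ by removing a Darboux ball — gives the last clause, using transitivity of~$\cob$.

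The main obstacle I expect is the precise verification that the ``large neighborhood'' hypotheses of the handle attachment theorems in \S\ref{sec:handles} are met inside a partially planar domain without any extra genericity assumption, and — more subtly — checking that after the surgery the leftover piece of $M$ does not obstruct realizing $(S^3,\xi_0)$ as a \emph{boundary component} rather than being absorbed; this is exactly why the theorem needs the extra hypothesis (more than one subdomain, or nonempty binding) and is not stated for arbitrary partially planar domains, and getting that dichotomy to do precisely the work required will be the delicate part.
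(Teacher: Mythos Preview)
Your proposal has the right toolkit but a genuine gap in how you think the handles act.  The central error is the claim that after a single $\Sigma$-capping along a binding component the positive boundary becomes ``$(S^3,\xi_0)$ together with whatever is left of the rest of~$M$.''  A $\Sigma$-capping surgery does \emph{not} disconnect: one removes solid torus neighborhoods of binding circles (which never disconnects a connected $3$-manifold) and glues in a single copy of $-\Sigma\times S^1$, so the resulting $M'$ is still connected.  The effect on pages is also not ``filling in the page with a disk'': the new page is the old page with $-\Sigma$ glued along the chosen boundary circles, so the number of page-boundary components \emph{decreases} by the number of components of $\p\Sigma$, but the page does not become a disk unless all remaining boundary (interface, $\p M_0$, leftover binding) has already been dealt with.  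Your parallel claim in the multi-subdomain case, that one round handle along a single interface torus ``produces an $S^3$ summand,'' is similarly unjustified: a single $\DD$-decoupling only caps off one boundary circle of the page on each side.

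What the paper actually does is a case analysis you are missing.  First, if the partially planar domain is a planar torsion domain, one invokes Corollary~\ref{cor:equivalence} directly.  If not, then the definitions force a dichotomy: either $M_0$ has a single irreducible subdomain with nonempty binding, in which case $\p M_0=\emptyset$ (otherwise it would be a torsion domain), so $M_0=M$ is an honest planar open book and one iterates $\DD$-capping to strip binding components down to one, yielding the disk open book on~$S^3$; or $M_0$ has more than one irreducible subdomain and (being non-torsion) is \emph{symmetric}, so $M=M_0$ with two planar pieces and no binding, and one iterates $\DD$-decoupling down to a single interface torus, yielding the tight $S^1\times S^2$, followed by one more $\DD$-capping to reach~$S^3$.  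Extra components that may appear along the way are simply capped off at the end.  Your final paragraph (removing a Darboux ball from a strong filling to get the cobordism to any strongly fillable target) is fine.
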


The conditions of Theorem~\ref{thm:toS3} hold in particular for all
planar contact manifolds, and in fact a stronger version can be stated
since the fully separating condition is always satisfied.
We will show in \S\ref{sec:Etnyre} that this implies Etnyre's
planarity obstruction from \cite{Etnyre:planar}.

\setcounter{thmp}{\value{thm}}
\begin{thmp}
\label{thm:planar}
Suppose $(M,\xi)$ is a connected planar contact $3$-manifold.  Then 
there exists a compact connected $4$-manifold $W$ with 
$\p W = S^3 \sqcup (-M)$, with the property that
for every closed $2$-form $\Omega$ on~$M$ with $\Omega|_\xi > 0$,
$W$ admits a symplectic structure~$\omega$ such that $\omega|_{TM} = \Omega$
and $(W,\omega)$ is a weak symplectic cobordism from $(M,\xi)$ to
$(S^3,\xi_0)$.
\end{thmp}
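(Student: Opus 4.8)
The plan is to combine the handle‑attachment cobordism that already underlies Theorem~\ref{thm:toS3} with the weak‑cobordism bookkeeping developed for Theorem~\ref{thm:weak}. First I would observe that a connected planar contact $3$‑manifold is, by definition, supported by an open book with planar pages and nonempty binding $B$, and that this realizes $M$ itself as a partially planar domain with nonempty binding in the sense of Definition~\ref{defn:planar}. Moreover, in this situation the $\Omega$‑separating condition is vacuous — there are no interior interface tori on which $\int_T \Omega$ could fail to vanish — so $(M,\xi)$ carries $\Omega$‑separating partially planar structure for \emph{every} closed $2$‑form $\Omega$ on $M$; i.e.\ it is fully separating. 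In particular the hypotheses of Theorem~\ref{thm:toS3} hold (nonempty binding), which already yields a connected symplectic cobordism from $(M,\xi)$ to $(S^3,\xi_0)$. What remains is the weak refinement: a \emph{single} smooth $4$‑manifold $W$ that works for all $\Omega$ simultaneously, equipped for each such $\Omega$ with a symplectic form restricting to $\Omega$ on $TM$.

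Next I would take $W$ to be the underlying smooth cobordism from that construction: starting from a collar $[-1,0] \times M$, one attaches generalized symplectic handles of the form $\Sigma \times \DD$ (the handles of \S\ref{sec:handles}) along sufficiently large neighborhoods $\nN(B_i)$ of the components of the binding; the cumulative effect, worked out in \S\ref{sec:handles}--\S\ref{sec:proofs}, is to modify the supporting open book to the disk‑page open book, whose total space is $(S^3,\xi_0)$. Since $M$ and $S^3$ are connected and the handles are glued into the connected collar, $W$ is connected with $\p W = S^3 \sqcup (-M)$, and this smooth manifold involves no choice of symplectic data.

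To build $\omega$ for a prescribed $\Omega$ with $\Omega|_\xi > 0$, I would rerun the same handle attachments in the weak category. Near the negative boundary, replace the Liouville collar by a weak collar adapted to $\Omega$: since $\Omega$ dominates $\xi$, the standard procedure for bending a dominating closed $2$‑form near a contact boundary into a Liouville model (as in the Eliashberg--Etnyre weak cap construction \cites{Eliashberg:cap,Etnyre:fillings} and in the proof of Theorem~\ref{thm:weak}) provides a symplectic form on $[-1,0]\times M$ that restricts to $\Omega$ on $\{0\}\times M$ and agrees with a genuine Liouville model near $\{-1\}\times M$. The handles are then attached exactly as in the strong case, each along a solid‑torus neighborhood $\nN(B_i)$ of a transverse knot; the only matching datum is the restriction of the $2$‑form to $\nN(B_i)$, and since a transverse knot is one‑dimensional and no pre‑Lagrangian‑torus interfaces occur here, there is no cohomological obstruction — one simply rescales each handle so that the relevant symplectic areas agree. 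This yields a symplectic form $\omega$ on the fixed $W$ with $\omega|_{TM} = \Omega$ which is convex at the positive end, where the induced contact structure is $(S^3,\xi_0)$; hence $(W,\omega)$ is the desired weak symplectic cobordism.

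The step I expect to be the main obstacle is carrying the handle attachment through in the weak category while controlling $\omega|_{TM}$ \emph{exactly} (not merely up to cohomology) and, simultaneously, without violating the ``sufficiently large neighborhood'' requirement needed to attach the $\Sigma \times \DD$ handles along the binding: one must verify that bending $\Omega$ into a Liouville model near $M$ and then matching it to the symplectic cores of the handles can be arranged without shrinking the $\nN(B_i)$ below the size threshold of \S\ref{sec:handles}, which is precisely where the collar estimates of \S\ref{sec:details} are used. A secondary point worth an explicit remark is that the final positive boundary really is the \emph{tight} $S^3$ — the disk‑page open book — rather than some other planar open book on the $3$‑sphere; this follows from the classification of open books on $S^3$.
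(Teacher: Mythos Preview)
Your proposal is correct and follows essentially the same approach as the paper: recognize that a planar open book is a fully separating partially planar domain (vacuously, since there are no interface tori), apply the $\DD$-capping cobordisms of Theorem~\ref{thm:toS3} to remove all but one binding component, and observe that the resulting smooth $W$ is independent of~$\Omega$ because near the binding every closed $2$-form is exact and the deformation of \S\ref{sec:deformation} brings~$\omega$ to a fixed model there. Your final remark about needing a classification of open books on~$S^3$ is unnecessary: once only one binding component remains on a planar page, the page is a disk, so the supported contact structure is~$\xi_0$ directly.
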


\begin{remark}
\label{remark:counterexamples?}
Let us describe a contact manifold that could conceivably furnish a
negative answer to Question~\ref{question:S3}.
Consider the standard contact $3$-torus $(T^3,\xi_1)$ (the definition
of $\xi_1$ is recalled in \eqref{eqn:xin} below), and divide it by the
$\ZZ_2$-action induced by the contact involution
$$
T^3 \to T^3 : (\eta,\phi,\theta) \mapsto (\eta + 1/2, -\phi, -\theta).
$$
The quotient $T^3 / \ZZ_2$ then inherits a contact structure~$\xi$,
which is supported by a summed open book with empty binding,
one interface torus $T := \{ 2\eta \in \ZZ \}$ and fibration
$$
\pi([\eta,\phi,\theta]) = \begin{cases}
\phi & \text{ for $0 < \eta < 1/2$,}\\
-\phi & \text{ for $1/2 < \eta < 1$.}
\end{cases}
$$
Since the pages are cylinders, $(T^3 / \ZZ_2,\xi)$ is a partially planar domain,
so Corollary~\ref{cor:separating} implies that it is not strongly
co-fillable.  (Note that $(T^3 / \ZZ_2,\xi)$ \emph{is} Stein fillable, as it
can be constructed from the Stein fillable torus $(T^3,\xi_1)$ by a
sequence of contact $(-1)$-surgeries along Legendrian curves in the
pre-Lagrangian fibers $\{\eta = \text{const}\}$.)  
Theorem~\ref{thm:toS3} however
does not apply, as there is only one irreducible subdomain and no binding.
It is not clear whether $(T^3 / \ZZ_2,\xi) \cob (S^3,\xi_0)$.
\end{remark}

\subsection{The main theorems on handle attaching}
\label{sec:handles}

The cobordisms of the previous section
are constructed by
repeated application of two handle attaching constructions that we 
shall now describe.  The handles we will work with take the form
$$
-\Sigma \times \DD
\quad\text{ and }\quad
-\Sigma \times [-1,1] \times S^1,
$$
where in each case $\Sigma$ is a compact oriented surface with boundary,
appearing with reversed orientation because we think of it as a
``symplectic cap'' for the page of an open book decomposition.
In the first case, we shall attach $\p\Sigma \times \DD$ to the neighborhood
of a transverse link, and in the second case, 
$\p\Sigma \times [-1,1] \times S^1$ is attached to the neighborhood of a
disjoint union of pre-Lagrangian tori.  It is important however to understand
that these constructions are not truly \emph{local}, as the attaching
requires neighborhoods that are in some sense sufficiently large.  This
condition on the neighborhoods is most easily stated in the language of
(possibly blown up and summed) open books---that is not
necessarily the only natural setting in which these operations make sense,
but it is the first that comes to mind.

We have derived considerable inspiration
from the symplectic capping technique introduced
by Eliashberg in \cite{Eliashberg:cap}.  The goal of Eliashberg's
construction was somewhat different, namely to embed any weak symplectic
filling into a closed symplectic manifold, but it can also be used to
construct symplectic cobordisms between contact manifolds with supporting
open books that are related to each other by capping off binding components.
Indeed, Eliashberg's capping construction works as follows:
\begin{enumerate}
\item Given $(M,\xi)$ with a supporting open book $\pi : M \setminus B \to S^1$,
attach $2$-handles to $[0,1]\times M$ at $\{1\} \times M$
along each component of~$B$ via the page framing.  This transforms $M$ 
by a $0$-surgery along each binding component, producing a new $3$-manifold 
$M'$ with a fibration $M' \to S^1$ whose fibers are the closed surfaces 
obtained by capping the original pages with disks.  Any symplectic structure
on $[0,1]\times M$ dominating~$\xi$ can then be extended over the
handles so that the fibers of $M' \to S^1$ are symplectic.
\item Cap off the boundary of the cobordism above by presenting $M'$ as
the boundary of a Lefschetz fibration over the disk with closed fibers.
\end{enumerate}
The first step can be generalized by observing that
if we choose to attach $2$-handles along some but
\emph{not all} components of the binding, then the new manifold $M'$ 
inherits an open book decomposition
$$
\pi' : M' \setminus B' \to S^1
$$
obtained from~$\pi$ by \emph{capping off} the corresponding 
boundary components of the pages (cf.~\cite{Baldwin:cap}), and we will show
that the symplectic structure can always be arranged so as to produce a weak 
symplectic
cobordism from $(M,\xi)$ to $(M',\xi')$, where $\xi'$ is supported 
by~$\pi'$.  Under some additional topological assumptions one can actually
arrange the weak cobordism to be strong; 
this variation on Eliashberg's construction has already been 
worked out in detail by Gay and Stipsicz \cite{GayStipsicz:cap}.
To generalize further, one can also imagine replacing the
usual $2$-handle $\DD \times \DD$ by $\Sigma \times \DD$ for any
compact orientable surface~$\Sigma$.  We shall carry out this generalization
below, though the reader may prefer to pretend $\Sigma=\DD$ on first reading,
and this suffices for most of the applications we will discuss.

The key to our construction will be to combine the above brand of handle
attachment with a ``blown up'' version,
in which a round handle is attached to $(M,\xi)$ along a $2$-torus
that can be thought of as a blown up binding circle.  This is most
naturally described in the language of \emph{blown up summed open books},
a generalization of open book decompositions that was introduced in
\cite{Wendl:openbook2} and will be
reviewed in more detail in \S\ref{sec:BUSOBD}.  Rougly speaking, a blown
up summed open book on a compact $3$-manifold~$M$, possibly with boundary,
defines a fibration
$$
\pi : M \setminus (B \cup \iI) \to S^1,
$$
where the \emph{binding} $B \subset M \setminus \p M$ is an oriented link
and the \emph{interface} $\iI \subset M \setminus \p M$ is a set of disjoint
$2$-tori, and the connected components of the fibers, which intersect
$\p M$ transversely, are called \emph{pages}.
An ordinary open book is the special case where $\iI = \p M = \emptyset$,
and in general we allow any or all of $B$, $\iI$ and~$\p M$ to be empty,
so there may be closed pages.
As with ordinary open books there is a natural notion of contact
structures being \emph{supported} by a blown up summed open book, in which
case binding components become positively transverse links and interface
and boundary components become pre-Lagrangian tori.
Such a contact structure exists and is unique up to deformation 
unless the pages are closed.  

Suppose~$(M,\xi)$ is a closed contact $3$-manifold containing a 
compact $3$-dimensional submanifold $M_0 \subset M$,
possibly with boundary,
which carries a blown up summed open book $\boldsymbol{\pi}$ that
supports~$\xi|_{M_0}$ and has nonempty binding.  Pick a set of
binding components
$$
B_0 = \gamma_1 \cup \ldots \cup \gamma_N \subset B,
$$
each of which comes with a natural framing determined by the pages 
adjacent to~$\gamma$, called the \defin{page framing}.  For each $\gamma_j \subset B_0$,
we identify a
tubular neighborhood $\nN(\gamma_j) \subset M$ of~$\gamma_j$ with the oriented
solid torus $S^1 \times \DD$ via this framing so that 
$\gamma_j = S^1 \times \{0\}$
with the correct orientation and the fibration~$\pi$ takes the form
$$
\pi(\theta,\rho,\phi) = \phi
$$
on $\nN(\gamma_j) \setminus \gamma_j$, where
$(\rho,\phi)$ denote polar coordinates on the disk, normalized so
that $\phi \in S^1 = \RR / \ZZ$.  Assign to $\p\nN(\gamma_j)$
its natural orientation as the boundary of $\nN(\gamma_j)$ and denote by
$$
\{\mu_j,\lambda_j\} \subset H_1(\p\nN(\gamma_j))
$$
the distinguished positively oriented homology basis for which
$\mu_j$ is a meridian and $\lambda_j$ is the longitude determined by
the page framing.
Denote $\nN(B_0) = \nN(\gamma_1) \cup \ldots \cup \nN(\gamma_N)$.

Now pick a compact, connected and oriented 
surface~$\Sigma$ with~$N$ boundary components
$$
\p \Sigma = \p_1 \Sigma \cup \ldots \cup \p_N \Sigma
$$
and choose an orientation preserving diffeomorphism of each
$\p_j\Sigma$ to $S^1$, thus defining a coordinate $s \in S^1$ for $\p_j\Sigma$.
Using this, we define new compact oriented manifolds
\begin{equation*}
\begin{split}
M' &= (M \setminus \nN(B_0)) \cup (-\Sigma \times S^1), \\
M'_0 &= (M_0 \setminus \nN(B_0)) \cup (-\Sigma \times S^1)
\end{split}
\end{equation*}
by gluing in $\Sigma \times S^1$ via orientation reversing diffeomorphisms
$\p\Sigma_j \times S^1 \to \p\nN(\gamma_j)$ that take the form
$$
(s,t) \mapsto (s,1,t)
$$
in the chosen coordinates.  On the level of homology, the map
$\p\Sigma \times S^1 \to \p\nN(B_0)$ identifies
$[\p_j\Sigma \times \{*\}]$ with $\lambda_j$ and
$[\{z\} \times S^1]$ for $z \in \p_j\Sigma$ with $\mu_j$.

\begin{remark}
In the special case $\Sigma = \DD$, the operation just defined
is simply a Dehn surgery along a binding component~$\gamma \subset B$
with framing~$0$ relative to the page framing.
\end{remark}

The fibration $\pi : M \setminus (B \cup \iI) \to S^1$ extends smoothly
over $\Sigma \times S^1$ as the projection to the second factor,
thus $M_0'$ inherits from
$\boldsymbol{\pi}$ a natural blown up summed open book
$\boldsymbol{\pi}'$, with binding $B \setminus B_0$,
interface~$\iI$ and pages that are obtained from the pages 
of~$\boldsymbol{\pi}$ by attaching~$-\Sigma$, gluing $\p_j \Sigma$ to 
the boundary component adjacent to~$\gamma_j$.
We say that $\boldsymbol{\pi}'$ is obtained from $\boldsymbol{\pi}$ by 
\defin{$\Sigma$-capping surgery along~$B_0$}.
If $\boldsymbol{\pi}'$ does not have closed pages, then it
supports a contact
structure~$\xi'$ on $M_0'$ which can be assumed to match~$\xi$ outside the
region of surgery, and thus extends to~$M'$.  

The $\Sigma$-capping surgery can also be defined by
attaching a generalized version of a $4$-dimensional $2$-handle:
define
$$
\handle_\Sigma = -\Sigma \times \DD,
$$
with boundary
$$
\p\handle_\Sigma = -\p^-\handle_\Sigma \cup \p^+\handle_\Sigma 
:= -(\p \Sigma \times \DD) \cup ( -\Sigma \times S^1).
$$
The above identifications of the neighborhoods $\nN(\gamma_j)$ with
$S^1 \times \DD$ yield an identification of $\nN(B_0)$ with
$\p^-\handle_\Sigma = \p \Sigma \times \DD$, which we use to attach
$\handle_\Sigma$ to the trivial cobordism $[0,1] \times M$ by
gluing $\p^-\handle_\Sigma$ to $\nN(B_0) \subset \{1\} \times M$, 
defining
\begin{equation}
\label{eqn:Wgamma}
W = ([0,1] \times M) \cup_{\nN(B_0)} \handle_\Sigma,
\end{equation}
which after smoothing the corners has boundary
$$
\p W = M' \sqcup (-M).
$$
We will refer to the oriented submanifolds
$$
\core_\Sigma := ([0,1] \times B_0) \cup_{B_0} (- \Sigma \times \{0\})
\subset W
$$
and
$$
\cocore_\Sigma := \{p\} \times \DD \subset W
$$
for an arbitrary interior point $p \in \Sigma$ as the
\defin{core} and \defin{co-core} respectively.  Note that
$\p \core_\Sigma = -B_0 \subset M$, $\p \cocore_\Sigma \subset M'$
and $\core_\Sigma \bullet \cocore_\Sigma = 1$, where $\bullet$ denotes the
algebraic count of intersections.

The following generalizes results in \cite{Eliashberg:cap}
and \cite{GayStipsicz:cap}.

\begin{thm}
\label{thm:2handles}
Suppose $\omega$ is a symplectic form on $[0,1] \times M$ with 
$\omega|_{\xi} > 0$, and let $W$ denote the handle cobordism
defined in \eqref{eqn:Wgamma}, after smoothing corners.
Then after a symplectic deformation of~$\omega$ away from $\{0\} \times M$,
$\omega$~can be extended symplectically over~$W$ so that it is positive on
$\core_\Sigma$, $\cocore_\Sigma$ and the pages of
$\boldsymbol{\pi}'$.  Moreover, if the latter pages are not
closed, then $\omega$ also dominates a supported contact
structure~$\xi'$ on $M'$, thus defining a weak
symplectic cobordism from $(M,\xi)$ to $(M',\xi')$.
\end{thm}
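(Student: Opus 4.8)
The plan is to build the extension in two pieces: a model symplectic form on the abstract handle $\handle_\Sigma = -\Sigma\times\DD$ that is already adapted to the page structure of $\boldsymbol{\pi}'$, and a collar interpolation near $\p^-\handle_\Sigma = \p\Sigma\times\DD \cong \nN(B_0)$ that matches it smoothly to the given form $\omega$ on $[0,1]\times M$. First I would fix, in a neighborhood of each $\gamma_j\subset B_0$, a normal form for $\omega$ on $[0,1]\times\nN(\gamma_j)$ using the coordinates $(\tau,\theta,\rho,\phi)$ with $\tau\in[0,1]$: since $\omega|_\xi>0$ and a supported contact form for $\boldsymbol{\pi}$ near $\gamma_j$ can be taken to be something like $d\phi + \rho^2\,d\theta$ (after rescaling), a Moser/Gray-type argument lets me assume $\omega = d(f(\tau)\alpha) + (\text{small area form in the page direction})$ near $\{1\}\times\nN(\gamma_j)$, where $f$ is increasing; the key point is that $\omega$ restricted to the local pages $\{\phi=\text{const}\}$ is already positive, and $\omega$ pairs positively with the core circle $\{1\}\times\gamma_j$. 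The deformation of $\omega$ away from $\{0\}\times M$ allowed in the statement gives exactly the room needed to put $\omega$ into such a form without touching the concave end.

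Next I would write down the model on $-\Sigma\times\DD$. On $\Sigma$ choose an area form $\sigma$ that near each $\p_j\Sigma$ looks like $ds\wedge d\rho$ in collar coordinates $(s,\rho)$ matching the neighborhood identification, and on $\DD$ use polar coordinates $(\rho,\phi)$ with area form $d(g(\rho)\,d\phi)$ for a suitable $g$ with $g(0)=0$, $g'>0$. The model form is $\omega_\handle = -\sigma - \epsilon\, d(g(\rho)\,d\phi)$ for small $\epsilon>0$ (signs chosen so that the orientation of $-\Sigma\times\DD$ is respected); one checks directly that this is symplectic, that it is positive on the co-core $\{p\}\times\DD$, positive on the core $-\Sigma\times\{0\}$ (where it restricts to $-\sigma>0$ with the core's orientation), and positive on each page of $\boldsymbol{\pi}'$, since a page of $\boldsymbol{\pi}'$ meets $\Sigma\times\DD$ in copies of $-\Sigma\times\{\phi=\text{const}\}$, glued along $\p_j\Sigma$ to an old page on which $\omega$ is already positive. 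The interface tori $\iI$ and the remaining binding $B\setminus B_0$ are untouched, so the page-positivity there is inherited from $\omega$.

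The main obstacle — and the step I would spend the most care on — is the gluing near $\p^-\handle_\Sigma$: interpolating between the normal form for $\omega$ on the collar $[1-\delta,1]\times\nN(B_0)$ and the model $\omega_\handle$ on a collar $\p\Sigma\times[1-\delta,1]\times\DD$ inside the handle, so that the result is a single smooth \emph{closed} $2$-form that is symplectic throughout and still positive on cores, co-cores and pages. The standard device here is that both forms, restricted to this collar region, are cohomologous (the region retracts onto a union of $2$-tori and circles, and one arranges the periods to agree by the freedom in choosing $f$, $g$, $\epsilon$, and the deformation of $\omega$) and both tame the relevant ``half-dimensional'' directions, so a convex-combination / Moser argument on the collar produces the interpolation; one must check that the convex combination stays non-degenerate, which is where the positivity on pages and cores is used as an openness condition. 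Finally, if the pages of $\boldsymbol{\pi}'$ are not closed, then by the existence-and-uniqueness statement for supported contact structures recalled above there is a contact form $\alpha'$ on $M'$ supported by $\boldsymbol{\pi}'$, agreeing with the old one outside the surgery region; since $\omega$ is positive on every page and positive transverse to the binding and pre-Lagrangian tori, a now-standard argument (as in \cite{Eliashberg:cap,Etnyre:fillings}) shows $\omega|_{\xi'}>0$ after a further small deformation of $\alpha'$ supported near $\{1\}\times M'$, giving the asserted weak cobordism from $(M,\xi)$ to $(M',\xi')$.
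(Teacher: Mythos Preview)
Your overall architecture (normal form near $\{1\}\times\nN(B_0)$, model on $-\Sigma\times\DD$, interpolation on the collar) mirrors the paper's, but the gluing step as you describe it has a real gap. A convex combination of two symplectic forms is almost never symplectic, and ``positivity on pages and cores'' is a condition on $2$-dimensional subspaces that does not control nondegeneracy in dimension~$4$; openness only helps if the two forms are already $C^0$-close, and nothing you have arranged makes that true on the collar. The Moser trick likewise requires a path of \emph{nondegenerate} closed forms, so invoking it here is circular.

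The paper circumvents this precisely. After deforming $\omega$ to the model $\omega_C = C\,d(\varphi(t)\lambda_0 + \lambda_\epsilon) + \Omega_0$ (Proposition~\ref{prop:deformation}), it does \emph{not} attempt to glue two independently chosen forms along a collar. Instead it introduces the vector field $X_\theta$ defined by $\omega_0(X_\theta,\cdot)=-d\theta$, which is locally Hamiltonian for $\omega_C$ and whose flow is used to dig a hole $\uU_{\gamma_j}$ near $\{1\}\times\gamma_j$ bounded by a hypersurface $H_{\gamma_j}$ transverse to~$X_\theta$. The flow of $X_\theta$ then gives coordinates $(\sigma,\theta,\tau,\phi)$ on a collar of $H_{\gamma_j}$ in which $\omega_0$ takes the \emph{exact} form $-d(\sigma\,d\theta)+d\eta$ with $d\theta\wedge d\eta>0$ (the analogue of Lemma~\ref{lemma:pullback}). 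The symplectic extension over $\Sigma\times\DD$ is then written down explicitly (the analogue of Lemma~\ref{lemma:extendingOmega0}): $\omega_0' = -d\mu + d(\beta\eta + (1-\beta)\eta_0)$, where $\mu$ restricts to $\psi(\sigma)\,d\theta$ on the collar of $\p\Sigma$ and $\psi$ is chosen to increase fast wherever the cutoff $\beta$ is nonconstant. Nondegeneracy is checked by direct computation of $\omega_0'\wedge\omega_0'$, and the freedom in $\psi'$ is exactly what makes it work. There is no convex combination and no appeal to openness. Your normal-form step and your model form are reasonable starting points, but to close the argument you need a mechanism of this kind that gives explicit control of the interpolation, not a soft argument.
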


We will refer to the cobordism $(W,\omega)$ of Theorem~\ref{thm:2handles}
henceforward as a \defin{$\Sigma$-capping cobordism}.  In general it is
a weak cobordism, but under certain conditions that depend only on the
topology of the setup, it can also be made strong.  Recall the standard
fact, observed originally by Eliashberg
\cite{Eliashberg:contactProperties}*{Proposition~3.1} (see also
\cite{Eliashberg:cap}*{Prop.~4.1}),
that whenever $(W,\omega)$ has a boundary
component $M$ on which $\omega$ dominates a positive contact structure~$\xi$
and is exact, $\omega$ can be deformed in a collar neighborhood to 
make~$M$ strongly convex, with~$\xi$ as the induced contact structure.  
In \S\ref{sec:cohomology} we will use routine Mayer-Vietoris
arguments to characterize the situations in which this trick can be
applied to the above construction.

\setcounter{thmp}{\value{thm}}
\begin{thmp}
\label{thm:2handleCohomology}
The symplectic cobordism $(W,\omega)$ constructed by
Theorem~\ref{thm:2handles} can be arranged so that the following holds.
Choose a real $1$-cycle~$h$
in $M \setminus \nN(B_0)$ such that $[h] \in H_1(M ; \RR)$ is Poincar\'e dual to
the restriction of~$\omega$ to $\{0\} \times M$.  Then there is a number $c > 0$
such that 
$$
\PD([\omega]) = [0,1] \times [h] + c\, [\cocore_\Sigma] \in
H_2(W,\p W ; \RR),
$$
where $\PD : H^2_\dR(W) \to H_2(W,\p W ; \RR)$ denotes the Poincar\'e-Lefschetz
duality isomorphism.
In particular, if $\{0\} \times M \subset (W,\omega)$ is 
(strongly) concave then the following conditions are equivalent:
\begin{enumerate}
\renewcommand{\labelenumi}{(\roman{enumi})}
\item $\omega$ is exact.
\item $[\cocore_\Sigma] = 0 \in H_2(W,\p W ; \RR)$.
\item $[\gamma_1] + \ldots + [\gamma_N]$ is not torsion in
$H_1(M)$.
\end{enumerate}
Further, assuming that $\{0\} \times M$ is concave, the following
conditions are also equivalent:
\begin{enumerate}
\renewcommand{\labelenumi}{(\roman{enumi})}
\item $(W,\omega)$ can be arranged to be a strong symplectic cobordism
from $(M,\xi)$ to $(M',\xi')$.
\item $[\p \cocore_\Sigma] = 0 \in H_1(M' ; \RR)$.
\item $\lambda_1 + \ldots + \lambda_N$ is not torsion in
$H_1(M \setminus B_0)$, where $\lambda_j$ denote the longitudes on
$\p\nN(\gamma_j)$ determined by the page framing.
\end{enumerate}
\end{thmp}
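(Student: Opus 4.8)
The plan is to prove the homological formula first, by a direct Mayer–Vietoris computation on the decomposition $W = ([0,1]\times M)\cup_{\nN(B_0)}\handle_\Sigma$, and then read off all the equivalences from it together with Poincar\'e–Lefschetz duality. Because $\handle_\Sigma = -\Sigma\times\DD$ deformation retracts onto $-\Sigma\times\{0\}$ and $\nN(B_0) = \p\Sigma\times\DD$ retracts onto $\p\Sigma\times\{0\}$, the relevant Mayer–Vietoris sequence for $H_*(W)$ (with $\RR$ coefficients throughout) reduces to gluing $H_*(M)$ to $H_*(\Sigma)$ along $H_*(\p\Sigma) = H_*(B_0)$. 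The upshot is that $H_2(W,\p W;\RR)$ is generated by the image of $H_2(M,\p M;\RR)$ — carried by cycles living in $M\setminus\nN(B_0)$, hence representable by classes of the form $[0,1]\times[h]$ — together with the single new class $[\cocore_\Sigma] = [\{p\}\times\DD]$, whose boundary $[\p\cocore_\Sigma]\in H_1(M')$ is the class of a meridian circle $\{z\}\times S^1$, i.e.\ (under the gluing identification) $\mu_j$ for any $j$. The point is that $[\cocore_\Sigma]$ is the Lefschetz dual of the ``co-core direction'': it pairs to $1$ with the core $\core_\Sigma$ and to $0$ with $[0,1]\times M$.

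Next I would pin down the coefficient $c>0$. Since $\omega|_\xi>0$ on $\{0\}\times M$, the class $[h]$ Poincar\'e dual to $\omega|_{\{0\}\times M}$ is well defined in $H_1(M;\RR)$, and Theorem~\ref{thm:2handles} guarantees $\omega$ is positive on the co-core $\cocore_\Sigma = \{p\}\times\DD$; hence $\int_{\cocore_\Sigma}\omega > 0$. Pairing the putative identity $\PD([\omega]) = [0,1]\times[h] + c[\cocore_\Sigma]$ against the core class $[\core_\Sigma]$ and using $\core_\Sigma\bullet\cocore_\Sigma = 1$, $\core_\Sigma\bullet([0,1]\times M) = 0$ (the core meets $\{0\}\times M$ only in its boundary $-B_0$, which is nullhomologous in $[0,1]\times M$ rel nothing relevant — more precisely the intersection number of a relative $2$-cycle with an absolute $2$-cycle contained in a deformation-retract-off collar vanishes), one gets $c = \int_{\core_\Sigma}\omega$, which Theorem~\ref{thm:2handles} asserts is positive. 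The remaining ingredient, that $[0,1]\times[h]$ is indeed the dual of $\omega$ along the product part, is the standard fact that on $[0,1]\times M$ the form $\omega$ is cohomologous to the pullback of $\omega|_{\{0\}\times M}$ and Poincar\'e–Lefschetz duality intertwines this with $[0,1]\times(\,\cdot\,)$.

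With the formula in hand, the two lists of equivalences become bookkeeping. Assume $\{0\}\times M$ is concave. For the first list: $\omega$ is exact iff $\PD([\omega]) = 0$ in $H_2(W,\p W;\RR)$; when $\{0\}\times M$ is concave the $[0,1]\times[h]$ term can be absorbed — concavity forces $[h]$ to push into $H_1(M';\RR)$ trivially in the relevant sense, or more simply one uses that the class $[0,1]\times[h]$ and $[\cocore_\Sigma]$ are linearly independent in $H_2(W,\p W;\RR)$ unless $[\cocore_\Sigma]$ vanishes, so since $c>0$, exactness $\Leftrightarrow [\cocore_\Sigma] = 0$, giving (i)$\Leftrightarrow$(ii). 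For (ii)$\Leftrightarrow$(iii): by the Mayer–Vietoris description, $[\cocore_\Sigma] = 0$ in $H_2(W,\p W;\RR)$ iff its image under $\p\colon H_2(W,\p W)\to H_1(\p W)$ and its interior obstruction both vanish, which after chasing the long exact sequence of $(W,M)$ translates into the condition that the class $[\gamma_1]+\dots+[\gamma_N]$ — the boundary of the core, sitting in $M$ — fails to be torsion in $H_1(M)$ precisely when the co-core class survives; the nontorsion condition is exactly what allows a rational $2$-chain in $M$ to cap off $\sum\gamma_j$ modulo torsion, killing $[\cocore_\Sigma]$. The second list is the same argument run ``from the $M'$ side'': replacing the role of capping in $M$ by capping in $M'$, one invokes the standard deformation (\cite{Eliashberg:cap}*{Prop.~4.1}) which converts a weakly dominated exact boundary into a convex one, so $(W,\omega)$ can be made strong iff $\omega$ restricted near $M'$ is exact, iff $[\p\cocore_\Sigma] = 0$ in $H_1(M';\RR)$ (that is the obstruction to exactness of a primitive of $\omega$ near $M'$), iff $\lambda_1+\dots+\lambda_N$ is nontorsion in $H_1(M\setminus B_0)$ — here one uses the homological dictionary recorded before Theorem~\ref{thm:2handles}, that the gluing sends $[\{z\}\times S^1]\mapsto\mu_j$ and $[\p_j\Sigma\times\{*\}]\mapsto\lambda_j$, together with a Mayer–Vietoris computation of $H_1(M')$ from $H_1(M\setminus B_0)$ and $H_1(\Sigma\times S^1)$.

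The main obstacle I anticipate is not any single deep step but getting the Mayer–Vietoris accounting exactly right with $\RR$ coefficients, in particular correctly identifying which classes in $H_2(W,\p W;\RR)$ are hit and carefully tracking how the ``concave $\{0\}\times M$'' hypothesis is used to discard the $[0,1]\times[h]$ contribution (equivalently, why one may choose $h$ inside $M\setminus\nN(B_0)$ at all — this needs $[\omega|_{\{0\}\times M}]$ to be representable away from $\nN(B_0)$, which follows since $\nN(B_0)$ is a union of solid tori and a $1$-cycle can be pushed off it, but the Poincar\'e-dual statement requires a small argument). The sign $c>0$ and the positivity of $\omega$ on the core and co-core are handed to us by Theorem~\ref{thm:2handles}, so the analytic content is already done; what remains is purely the linear algebra of the long exact sequences, plus citing \cite{Eliashberg:cap}*{Prop.~4.1} for the weak-to-strong deformation in the second list.
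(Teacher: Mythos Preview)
Your overall strategy matches the paper's: compute $H_2(W)$ via Mayer--Vietoris for the decomposition $W = ([0,1]\times M)\cup_{\nN(B_0)}\handle_\Sigma$, verify the formula for $\PD([\omega])$ by pairing with generators of $H_2(W)$, and then run a second Mayer--Vietoris for $M' = (M\setminus\nN(B_0))\cup(-\Sigma\times S^1)$ to translate $[\p\cocore_\Sigma]=0$ into a condition on the longitudes. However, several concrete steps in your plan are incorrect or reversed.

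First, your computation of $c$ is wrong. You propose to extract $c$ by intersecting $\PD([\omega])$ with the core $\core_\Sigma$, but $\core_\Sigma$ has boundary $-B_0\subset M$ and thus is \emph{not} a class in $H_2(W)$; the intersection pairing you need is between $H_2(W,\p W;\RR)$ and $H_2(W;\RR)$, so you must pair against \emph{absolute} cycles. The paper does exactly this: when $[\gamma_1]+\ldots+[\gamma_N]$ is torsion of order~$k_0$, there is a new generator $A_{k_0}=C_M+k_0[\core_\Sigma]\in H_2(W)$ built by capping $k_0$ copies of the core with a $2$-chain $C_M$ in $M$, and pairing against $A_{k_0}$ gives
\[
c=\int_{\core_\Sigma}\omega+\frac{1}{k_0}\left(\int_{C_M}\Omega-C_M\bullet[h]\right),
\]
which is \emph{not} simply $\int_{\core_\Sigma}\omega$. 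Positivity of $c$ then relies on the freedom (Remark~\ref{remark:coreIsLarge}) to make $\int_{\core_\Sigma}\omega$ arbitrarily large, not merely positive.

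Second, you misidentify the role of concavity. Concavity of $\{0\}\times M$ means precisely that $\Omega=\omega|_{\{0\}\times M}$ is exact, hence $[h]=0$ and the $[0,1]\times[h]$ term simply vanishes. Your remarks about $[h]$ ``pushing into $H_1(M';\RR)$ trivially'' or about linear independence of $[0,1]\times[h]$ and $[\cocore_\Sigma]$ are beside the point.

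Third, your argument for (ii)$\Leftrightarrow$(iii) is stated backwards: you write that the nontorsion condition ``allows a rational $2$-chain in $M$ to cap off $\sum\gamma_j$,'' but it is exactly the \emph{torsion} case that permits such capping. The correct logic is: if $[\gamma_1]+\ldots+[\gamma_N]$ is not torsion, then $H_2(W)=\im(H_2(M)\to H_2(W))$, every such class misses $\cocore_\Sigma$, and so $[\cocore_\Sigma]=0$ by duality; if it \emph{is} torsion, then $A_{k_0}\bullet[\cocore_\Sigma]=k_0\ne0$ witnesses $[\cocore_\Sigma]\ne0$.
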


It should be emphasized that the above theorem assumes $\Sigma$ is
\emph{connected}.  The case where $\Sigma$ is disconnected is equivalent
to performing multiple surgery operations in succession, but the
statement of Theorem~\ref{thm:2handleCohomology} would then become
more complicated.

\begin{remark}
For the case $\Sigma=\DD$, if $\gamma \subset B$ denotes the binding component
where $0$-surgery is performed, then
Theorem~\ref{thm:2handleCohomology} means that $\omega$ will be exact on~$W$
if and only if $\gamma$ is not torsion in $H_1(M)$, and $(W,\omega)$ can
be made into a strong cobordism
if and only if~$\gamma$ has no nullhomologous cover whose
page framing matches its Seifert framing.  An equivalent condition is assumed
in \cite{GayStipsicz:cap}, which only constructs strong 
cobordisms.
\end{remark}

\begin{remark}
\label{remark:notExact}
Though~$\omega$ in the above construction is sometimes an exact 
symplectic form, $(W,\omega)$ is \emph{never} an exact
cobordism, i.e.~it does not admit a global primitive that 
restricts to suitable contact forms on both boundary components.  This follows
immediately from the observation that the core $\core_\Sigma \subset W$
is a symplectic submanifold whose oriented boundary
is a \emph{negatively transverse} link in $(M,\xi)$, hence if
$\omega = d\lambda$ and $\lambda|_{TM}$ defines a contact form on $(M,\xi)$
with the proper co-orientation, then
$$
0 < \int_{\core_\Sigma} \omega = \int_{\p \core_\Sigma} \lambda < 0,
$$
a contradiction.  A similar remark applies to the round handle cobordism
considered in Theorems~\ref{thm:roundHandles} 
and~\ref{thm:roundHandleCohomology} below.
The non-exactness of $(W,\omega)$ is important because
there are examples in which it is known that no exact cobordism 
from $(M,\xi)$ to $(M',\xi')$ exists (see \S\ref{sec:nonexact}).
\end{remark}

To describe the blown up version of these results, we continue with the same
setup as above and choose a set of interface tori,
$$
\iI_0 = T_1 \cup \ldots \cup T_N \subset \iI,
$$
together with an orientation for each $T_j \subset \iI_0$.  There is then a
distinguished positively oriented homology basis
$$
\{\mu_j,\lambda_j\} \subset H_1(T_j),
$$
where $\lambda_j$ is represented by some oriented boundary component of a
page adjacent to~$T_j$, and $\mu_j$ is represented by a closed leaf of the
characteristic foliation defined on~$T_j$ by~$\xi$.
Choose tubular neighborhoods $\nN(T_j) \subset M$ of~$T_j$ and identify
them with $S^1 \times [-1,1] \times S^1$ to define positively oriented
coordinates $(\theta,\rho,\phi)$ in which $\lambda_j = [S^1 \times \{*\}]$
and $\mu_j = [\{*\} \times S^1]$.  We may then assume that for every
$\theta_0 \in S^1$ the loop $\{(\theta_0,0)\} \times S^1$ is Legendrian,
and the fibration~$\pi$ takes the form
$$
\pi(\theta,\rho,\phi) = \begin{cases}
\phi & \text{ for $\rho > 0$},\\
-\phi & \text{ for $\rho < 0$}.
\end{cases}
$$
Denote the two oriented boundary components of $\nN(T_j)$ by
$$
\p\nN(T_j) = \p_+\nN(T_j) \sqcup \p_-\nN(T_j),
$$
where we define the oriented tori
$\p_\pm \nN(T_j) = \pm(S^1 \times \{\pm 1\} \times S^1)$ with corresponding
homology bases $\{\mu_j^\pm,\lambda_j^\pm\} \subset H_1(\p_\pm\nN(T_j))$
such that
$$
\lambda_j^\pm := \lambda_j \in H_1(\nN(T_j)) \quad\text{ and }\quad
\mu_j^\pm := \pm \mu_j \in H_1(\nN(T_j)).
$$
Denote the union of all the neighborhoods
$\nN(T_j)$ by~$\nN(\iI_0)$.  Then writing two identical 
copies of~$\Sigma$ as $\Sigma_\pm$ and choosing a positively oriented
coordinate $s \in S^1$ for each boundary component $\p_j\Sigma_\pm$, 
we construct new compact oriented manifolds
\begin{equation*}
\begin{split}
M' &= (M \setminus \nN(\iI_0)) \cup 
(- \Sigma_+ \times S^1) \cup (-\Sigma_- \times S^1), \\
M'_0 &= (M_0 \setminus \nN(\iI_0)) \cup 
(- \Sigma_+ \times S^1) \cup (-\Sigma_- \times S^1)
\end{split}
\end{equation*}
from~$M$ and $M_0$ respectively by gluing along orientation reversing
diffeomorphisms $\p_j\Sigma_\pm \times S^1 \to \p_\pm\nN(T_j)$ 
that take the form
$$
(s,t) \mapsto (s,\pm 1,\pm t)
$$
in the chosen coordinates.  Thus in homology,
$[\p_j\Sigma_\pm \times \{*\}] \in H_1(\p_j \Sigma_\pm \times S^1)$ 
is identified with $\lambda_j^\pm$ and
$[\{*\} \times S^1] \in H_1(\p_j\Sigma_\pm \times S^1)$ with~$\mu_j^\pm$.

Once again the fibration $\pi : M \setminus (B \cup \iI) \to S^1$
extends smoothly over the glued in region
$(\Sigma_+ \sqcup \Sigma_-) \times S^1$ as the projection to~$S^1$, so
$M'_0$ inherits from $\boldsymbol{\pi}$ a natural blown up summed open book
$\boldsymbol{\pi}'$, with interface $\iI \setminus \iI_0$, binding~$B$
and fibers that are obtained from the fibers
of~$\boldsymbol{\pi}$ by attaching $-(\Sigma_+ \sqcup \Sigma_-)$ along the
boundary components adjacent to~$\iI_0$.
We say that
$\boldsymbol{\pi}'$ is obtained by \defin{$\Sigma$-decoupling surgery
along~$\iI_0$}.

\begin{remark}
The choice of the term \emph{decoupling} is easiest to justify in the
special case $\Sigma=\DD$: then the surgery cuts open~$M$ along~$T$ and
glues in two solid tori that cap off the corresponding boundary components
of the pages.
\end{remark}

Even if~$M$ is connected, $M'$ may in general be disconnected,
and there is a (possibly empty) component
$$
M'_{\text{flat}} \subset M',
$$
defined as the union of all the closed pages 
of~$\boldsymbol{\pi}'$.  Denote
$M'_{\text{convex}} := M' \setminus M'_{\text{flat}}$, so that
$$
M' = M'_{\text{convex}} \sqcup M'_{\text{flat}}.
$$
On $M'_{\text{convex}}$ there is a contact structure~$\xi'$
which matches~$\xi$ away from the region of surgery and is supported
by $\boldsymbol{\pi}'$ in 
$M'_{\text{convex}} \cap M'_0$.

The above surgery corresponds topologically to the attachment of a
\defin{round handle}: denote the annulus by
$$
\AA = [-1,1] \times S^1
$$
and define
$$
\roundhandle_\Sigma = -\Sigma \times \AA,
$$
with boundary
$$
\p\roundhandle_\Sigma = -\p^-\roundhandle_\Sigma \cup \p^+\roundhandle_\Sigma :=
-\left( \p\Sigma \times \AA \right) \cup 
\left( -\Sigma \times \p \AA \right),
$$
where we identify the two connected components of
$\p^+\roundhandle_\Sigma = -\Sigma \times \{-1,1\} \times S^1$ with
$-\Sigma_\pm \times S^1$ via the orientation preserving maps
\begin{equation}
\label{eqn:Sigmapm}
-\Sigma_\pm \times S^1 \to -\Sigma \times \{\pm 1\} \times S^1 :
(p,\phi) \mapsto (p,\pm 1,\pm \phi).
\end{equation}
Using the identifications of the neighborhoods $\nN(T_j)$ with $S^1 \times
[-1,1] \times S^1$ chosen above, we can identify 
$$
\p^-\roundhandle_\Sigma = 
\p\Sigma \times \AA = \bigsqcup_{j=1}^N \p_j\Sigma \times [-1,1] \times S^1
$$
with $\nN(\iI_0)$ and use this to attach $\roundhandle_\Sigma$
to~$[0,1] \times M$ by gluing $\p^-\roundhandle_\Sigma$ to $\nN(\iI_0) \subset
\{1\} \times M$, defining an oriented cobordism
\begin{equation}
\label{eqn:WT}
W = ([0,1] \times M) \cup_{\nN(\iI_0)} \roundhandle_\Sigma
\end{equation}
with boundary $\p W = M' \sqcup (-M)$.  Use the coordinates
$(\theta,\rho,\phi) \in S^1 \times [-1,1] \times S^1$ on each
$\nN(T_j) \subset \nN(\iI_0)$ to define an oriented link
$\widehat{B}_0$ as the union of all the loops
$$
S^1 \times \{(0,0)\} \subset T_j \subset \iI_0.
$$
Then the core and co-core respectively can be defined as oriented
submanifolds by
$$
\roundcore_\Sigma := ([0,1] \times \widehat{B}_0) \cup_{\widehat{B}_0}
(-\Sigma \times \{(0,0)\}) \subset W
$$
and
$$
\roundcocore_\Sigma := \{p\} \times \AA \subset W
$$
for an arbitrary interior point $p \in \Sigma$.
We have $\p \roundcore_\Sigma = -\widehat{B}_0 \subset M$,
$\p \roundcocore_\Sigma \subset M'$ and $\roundcore_\Sigma \bullet
\roundcocore_\Sigma = 1$.

\begin{thm}
\label{thm:roundHandles}
Suppose $\omega$ is a symplectic form on $[0,1] \times M$ which
satisfies $\omega|_{\xi} > 0$ and
\begin{equation}
\label{eqn:homological}
\sum_{j=1}^N \int_{T_j} \omega = 0,
\end{equation}
and $W$ denotes the round handle cobordism of \eqref{eqn:WT}.
Then after a symplectic deformation away from $\{0\} \times M$,
$\omega$ can be extended symplectically over~$W$ so that it is positive
on $\roundcore_\Sigma$, $\roundcocore_\Sigma$ and the pages 
of~$\boldsymbol{\pi}'$, and $\omega$ dominates a supported contact
structure~$\xi'$ on $M'_{\text{convex}}$.  In particular,
after capping $M'_{\text{flat}}$ by attaching a Lefschetz 
fibration over the disk as in \cite{Eliashberg:cap}, 
this defines a weak symplectic
cobordism from $(M,\xi)$ to $(M'_{\text{convex}},\xi')$.
\end{thm}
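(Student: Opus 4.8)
The plan is to treat the construction as purely local near the attaching region: outside the set
$$
X := \bigl([0,1]\times\nN(\iI_0)\bigr) \cup_{\{1\}\times\nN(\iI_0)} \roundhandle_\Sigma
$$
we keep the trivial cobordism $\bigl([0,1]\times(M\setminus\nN(\iI_0)),\omega\bigr)$ unchanged, so everything reduces to producing a symplectic form on $X$ that (a) agrees with $\omega$ near $\{0\}\times\nN(\iI_0)$ and along the side $[0,1]\times\p\nN(\iI_0)$, so that it glues to the exterior; (b) is positive on $\roundcore_\Sigma$, on $\roundcocore_\Sigma$, and on the parts of the pages of $\boldsymbol{\pi}'$ lying in $X$; and (c) dominates, near $\p^+\roundhandle_\Sigma$, a contact structure $\xi'$ supported by $\boldsymbol{\pi}'$. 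The necessity of the hypothesis \eqref{eqn:homological} for the existence of any such extension is clear in advance: the $2$-chain $-\Sigma\times\{(0,0)\}\times S^1$ inside $\roundhandle_\Sigma$ has boundary $\bigsqcup_j T_j$, so $\sum_j\int_{T_j}\omega$ must vanish once $\omega$ extends as a closed form.

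First I would put $\omega$ into a normal form on the collar $[0,1]\times\nN(\iI_0)$. Since $\omega|_\xi>0$ and $\xi|_{M_0}$ is supported by the blown up summed open book $\boldsymbol{\pi}$, the standard collar models for weakly dominated contact boundaries, adapted to the interface coordinates $(\theta,\rho,\phi)\in S^1\times[-1,1]\times S^1$ on each $\nN(T_j)$, allow me to deform $\omega$ — supported away from $\{0\}\times M$ and fixing a neighborhood of $[0,1]\times\p\nN(\iI_0)$ — to an explicit model $\omega_{\mathrm{mod},j}$ on $[0,1]\times S^1\times[-1,1]\times S^1$ whose only modulus is the flux $a_j := \int_{T_j}\omega$; a Moser argument handles the deformation once the relevant cohomological data, which near a pre-Lagrangian torus reduces to this single period, is matched. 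One arranges the model so that it is already positive on the piece $[0,1]\times\widehat{B}_0$ of the core and on the pages it meets.

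The heart of the proof is the construction of the handle form $\omega_H$ on $\roundhandle_\Sigma = -\Sigma\times[-1,1]_\rho\times S^1_\phi$, which I would take of the schematic shape
$$
\omega_H \;=\; \sigma_\rho \;+\; \eta\wedge d\phi \;+\; d\rho\wedge\tau ,
$$
where $\sigma_\rho$ is a $\rho$-dependent family of (suitably signed) area forms on $\Sigma$, $\eta$ is a $1$-form on $\Sigma$, and $\tau$ is a $1$-form whose $d\phi$-component is positive. Matching $\omega_H$ to $\omega_{\mathrm{mod},j}$ near $\p_j\Sigma\times[-1,1]\times S^1 \cong \nN(T_j)$ forces $\oint_{\p_j\Sigma}\eta = a_j$, and such an $\eta$ exists on the connected surface $\Sigma$ \emph{precisely because} $\sum_j a_j = 0$ by \eqref{eqn:homological}: the image of $H^1(\Sigma;\RR)\to H^1(\p\Sigma;\RR)\cong\RR^N$ is exactly the subspace $\{\sum_j = 0\}$, and this is where the hypothesis enters. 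Non-degeneracy of $\omega_H$ comes from the $\sigma_\rho\wedge d\rho\wedge\tau$ term once $\sigma_\rho$ is taken large relative to $\eta$ and the $\Sigma$-component of $\tau$; positivity on $-\Sigma\times\{p'\}$ for $p'$ near $(0,0)$ and near the two ends $\{\pm1\}\times S^1$ (hence on the core piece $-\Sigma\times\{(0,0)\}$ and the new parts of the pages of $\boldsymbol{\pi}'$) follows from the sign of $\sigma_\rho$ there; positivity on the co-core $\{p\}\times\AA$ follows from the $d\rho\wedge\tau$ term; and the boundary matching is handled by interpolating cutoffs. Simultaneously juggling these positivity requirements against non-degeneracy and the prescribed boundary germ is the main technical obstacle, and I expect it to require a careful, somewhat lengthy choice of interpolating functions, in the spirit of but more involved than the model underlying Theorem~\ref{thm:2handles}.

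It remains to package the output. Near $\p^+\roundhandle_\Sigma$ meeting $M'_{\text{convex}}$, i.e.\ along the glued-in pieces $-\Sigma_\pm\times S^1$, the new pages $-\Sigma\times\{\phi_0\}$ are $\omega_H$-positive and $\boldsymbol{\pi}'$ restricts to the trivial fibration to $S^1_\phi$, so the Giroux/Thurston--Winkelnkemper construction of a supported contact structure yields $\xi'$ with $\omega_H|_{\xi'}>0$, matching $\xi$ (already dominated) on the overlap with the unchanged region; on the closed-page component $M'_{\text{flat}}$ the form $\omega_H$ exhibits $M'_{\text{flat}}\to S^1$ as a fibration with symplectic fibers in convex position, so Eliashberg's capping by a Lefschetz fibration over the disk applies \cite{Eliashberg:cap}. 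Finally, gluing $(X,\omega_H)$ to $\bigl([0,1]\times(M\setminus\nN(\iI_0)),\omega\bigr)$ along $[0,1]\times\p\nN(\iI_0)$, where the forms agree by construction, produces the extended symplectic form on $W$; the asserted positivity on $\roundcore_\Sigma$, $\roundcocore_\Sigma$ and the pages of $\boldsymbol{\pi}'$, and the domination of $\xi'$, are all local statements that were verified along the way.
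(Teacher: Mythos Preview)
Your core idea --- that the hypothesis $\sum_j a_j = 0$ is precisely what allows one to build a closed $1$-form on $\Sigma$ with prescribed periods $a_j$ along $\p_j\Sigma$, and that this is the key obstruction-killing step --- is correct and matches the paper.  But your insistence on treating the construction as ``purely local'', i.e.\ keeping $\omega$ unchanged on $[0,1]\times(M\setminus\nN(\iI_0))$, creates a matching problem along the lateral boundary $[0,1]\times\p\nN(\iI_0)$ that you do not solve.  You assert that a Moser argument puts $\omega|_{[0,1]\times\nN(T_j)}$ into a model $\omega_{\mathrm{mod},j}$ whose ``only modulus is the flux $a_j$'' while simultaneously ``fixing a neighborhood of $[0,1]\times\p\nN(\iI_0)$''; but if the model is determined by the single number $a_j$ and the original $\omega$ near the sides is arbitrary (subject only to $\omega|_\xi>0$), there is no reason these should agree, and a Moser isotopy relative to both the bottom face and the sides of a manifold with corners is neither standard nor obviously available here.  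This is a genuine gap, and it stems from an unnecessary restriction: the theorem allows deforming $\omega$ everywhere away from $\{0\}\times M$, not just over $\nN(\iI_0)$.

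The paper exploits this freedom and takes a different route.  First it deforms $\omega$ \emph{globally} on $[\delta,1]\times M$ to a specific model $\omega_C = C\,d(\varphi(t)\lambda_0 + \lambda_\epsilon) + \Omega_0$ built from the confoliation and contact forms of the abstract open-book model on $M_0$ (Proposition~\ref{prop:deformation}); this sidesteps any lateral matching.  Second --- and this is the step your schematic $\omega_H$ does not capture --- rather than gluing $\roundhandle_\Sigma$ onto $\{1\}\times\nN(\iI_0)$ and then building a form, the paper \emph{digs a hole}: it chooses a hypersurface $H_{T_j}\subset[0,1]\times\nN(T_j)$ transverse to the locally Hamiltonian vector field $X_\theta$ defined by $\omega_0(X_\theta,\cdot)=-d\theta$, removes the region $\uU_{T_j}$ it bounds, and uses the flow of $X_\theta$ to produce collar coordinates $(\sigma,\theta,\tau,\phi)$ in which the model symplectic form becomes $-d(\sigma\,d\theta)+d\eta$ (Lemmas~\ref{lemma:Xtheta} and~\ref{lemma:pullback}).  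In these coordinates the extension over $-\Sigma\times\AA$ reduces to extending $\sigma\,d\theta$ to a primitive $\mu$ of an area form on $\Sigma$ and interpolating $\eta$ to a split form on $\AA$ (Lemma~\ref{lemma:extendingOmega0}); the homological condition enters exactly as you describe, via a closed $1$-form $\kappa$ on $\Sigma$ with $\kappa = c_j\,d\theta$ near each $\p_j\Sigma$.  The Hamiltonian-flow trick is what makes the symplectic extension clean, and it has no analogue in your outline.
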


We will refer to $(W,\omega)$ in this construction from now on as a
\defin{$\Sigma$-decoupling cobordism}.

\begin{remark}
The homological condition \eqref{eqn:homological} is clearly
not removable since the
$2$-cycles $\sum_{j=1}^N [\p_\pm\nN(T_j)]$ both become nullhomologous in~$M'$.
Note that here the chosen orientations of the tori~$T_j$ play a role,
i.e.~they cannot in general be chosen arbitrarily unless~$\omega$ is exact.
No such issue arose in Theorem~\ref{thm:2handles} because~$\omega$ is
always exact on a neighborhood of a binding circle.  This is the reason
why the ``$\Omega$-separating'' condition is needed for many of the
results in \S\ref{sec:results}, and there are easy examples to show that
those theorems are not true without it 
(cf.~Remark~\ref{remark:needOmegaSeparating}).
\end{remark}

For the analogue of Theorem~\ref{thm:2handleCohomology} in this
context, we shall
restrict for simplicity to the case where $\int_{T_j}\omega$ vanishes for
every~$T_j \subset \iI_0$.  Note that in this case, the
Poincar\'e dual of $\omega|_{TM}$ can be represented by a real $1$-cycle in
$M \setminus \nN(\iI_0)$.

\setcounter{thmp}{\value{thm}}
\begin{thmp}
\label{thm:roundHandleCohomology}
If $\int_{T_j} \omega = 0$ for each $T_1,\ldots,T_N \subset \iI_0$,
then the symplectic cobordism $(W,\omega)$ constructed by 
Theorem~\ref{thm:roundHandles} can be arranged so that the following holds.
Choose a real $1$-cycle $h$
in $M \setminus \nN(\iI_0)$ with $[h] \in H_1(M ; \RR)$
Poincar\'e dual to the
restriction of~$\omega$ to $\{0\} \times M$.  
Then there is a number $c > 0$ with
$$
\PD([\omega]) = [0,1] \times [h] + c\, [\roundcocore_\Sigma] \in
H_2(W,\p W ; \RR).
$$
In particular, if $\{0\} \times M \subset (W,\omega)$ is 
(strongly) concave then the following conditions are equivalent:
\begin{enumerate}
\renewcommand{\labelenumi}{(\roman{enumi})}
\item $\omega$ is exact.
\item $[\roundcocore_\Sigma] = 0 \in H_2(W,\p W ; \RR)$.
\item There are no integers $k,m_1,\ldots,m_N \in \ZZ$ with $k > 0$ and
$\sum_{j=1}^N m_j = 0$ such that the homology class
$$
k (\lambda_1 + \ldots + \lambda_N) + \sum_{j=1}^N m_j \mu_j \in H_1(\iI_0)
$$
is trivial in $H_1(M)$.
\end{enumerate}
Further, if $\{0\} \times M$ is concave and $M'_{\text{flat}} = \emptyset$,
the following conditions are also equivalent:
\begin{enumerate}
\renewcommand{\labelenumi}{(\roman{enumi})}
\item $(W,\omega)$ can be arranged to be a strong symplectic cobordism
from $(M,\xi)$ to $(M',\xi')$.
\item $[\p \roundcocore_\Sigma] = 0 \in H_1(M' ; \RR)$.
\item There are no integers $k_\pm,m_1^\pm,\ldots,m_N^\pm \in \ZZ$ 
with $k_- + k_+ > 0$ and $\sum_j m_j^+ = \sum_j m_j^- = 0$ such that
$$
k_+ \sum_{j=1}^N \lambda_j^+ + k_- \sum_{j=1}^N \lambda_j^- +
\sum_{j=1}^N m_j^+ \mu_j^+ + \sum_{j=1}^N m_j^- \mu_j^- =
0 \in H_1(M \setminus \iI_0).
$$
\end{enumerate}
Finally, if $M'_{\text{flat}}$ and $M'_{\text{convex}}$ are both nonempty,
assume the labels are chosen so that $\Sigma_+ \times S^1 \subset
M'_{\text{convex}}$ and $\Sigma_- \times S^1 \subset M'_{\text{flat}}$,
and consider the weak cobordism
$$
(\overline{W},\omega) = (W,\omega) \cup_{M'_{\text{flat}}} (X,\omega_X)
$$
from $(M,\xi)$ to $(M'_{\text{convex}},\xi')$
obtained by capping off $M'_{\text{flat}}$ with a symplectic
Lefschetz fibration $X \to \DD$ as in \cite{Eliashberg:cap}.
The following conditions are then equivalent:
\begin{enumerate}
\renewcommand{\labelenumi}{(\roman{enumi})}
\item $(W,\omega)$ can be arranged to be a strong symplectic cobordism
from $(M,\xi)$ to $(M'_{\text{convex}},\xi')$.
\item $[\p \roundcocore_\Sigma \cap M'_{\text{convex}}] = 0 
\in H_1(M'_{\text{convex}} ; \RR)$.
\item There are no integers $k,m_1,\ldots,m_N \in \ZZ$ with $k > 0$ and
$\sum_j m_j = 0$ such that the homology class
$$
k \sum_{j=1}^N \lambda_j^+ + \sum_{j=1}^N m_j \mu_j^+
$$
is trivial in $H_1(M \setminus \iI_0)$.
\end{enumerate}
\end{thmp}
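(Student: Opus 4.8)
The plan is to compute $H^2_\dR(W)$ and $H_2(W,\p W;\RR)$ by a Mayer--Vietoris argument tailored to the handle decomposition $W=([0,1]\times M)\cup_{\nN(\iI_0)}\roundhandle_\Sigma$, where $\roundhandle_\Sigma=-\Sigma\times\AA$ and $\AA=[-1,1]\times S^1$, and then to read off the Poincar\'e dual formula and all three families of equivalences from this computation together with the positivity statements of Theorem~\ref{thm:roundHandles}; since $\Sigma$ is connected with $N$ boundary components, the only relation among $[\p_1\Sigma],\dots,[\p_N\Sigma]$ in $H_1(\Sigma;\RR)$ is $\sum_j[\p_j\Sigma]=0$. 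For the Poincar\'e dual formula I first retract $[0,1]\times M$ onto $\{1\}\times M$ and $\roundhandle_\Sigma$ onto $-\Sigma\times S^1$. Restriction $H^2_\dR(W)\to H^2_\dR([0,1]\times M)=H^2_\dR(M)$ sends $[\omega]$ to $[\omega|_{TM}]$, the Poincar\'e dual of $[h]$, and this pins down the ``$[0,1]\times[h]$'' summand of $\PD([\omega])$; the remaining part of $\PD([\omega])$ lies in the image of $H_2(\roundhandle_\Sigma,\p\roundhandle_\Sigma)$, and the only such class surviving in $H_2(W,\p W;\RR)$ is $[\roundcocore_\Sigma]$, since any $\gamma\times S^1$ for a loop $\gamma\subset\interior\Sigma$ becomes trivial rel $\p W$ via the nullhomology $\gamma\times\AA$, whose boundary $\gamma\times\p\AA$ lies in $\p^+\roundhandle_\Sigma\subset M'$. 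Hence $\PD([\omega])=[0,1]\times[h]+c\,[\roundcocore_\Sigma]$ for some $c\in\RR$, after choosing $\omega$ appropriately within the deformation class from Theorem~\ref{thm:roundHandles}; computing $\int_{\roundcore_\Sigma}\omega$, which equals the intersection number of $\PD([\omega])$ with $[\roundcore_\Sigma]\in H_2(W,\{0\}\times M;\RR)$, and using $\roundcore_\Sigma\bullet\roundcocore_\Sigma=1$ together with the fact that $\roundcore_\Sigma$ may be made disjoint from $[0,1]\times h$ (as $h\subset M\setminus\nN(\iI_0)$ but $\widehat{B}_0\subset\iI_0$) gives $c=\int_{\roundcore_\Sigma}\omega>0$, the positivity being exactly the assertion of Theorem~\ref{thm:roundHandles} that $\omega>0$ on $\roundcore_\Sigma$.

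For the exactness equivalences, concavity of $\{0\}\times M$ exhibits $\omega|_{T(\{0\}\times M)}$ as exact via a Liouville field, so its Poincar\'e dual is represented by the empty cycle and we may take $h=0$, whence $\PD([\omega])=c\,[\roundcocore_\Sigma]$ with $c>0$. Then $\omega$ exact $\Longleftrightarrow[\omega]=0\Longleftrightarrow[\roundcocore_\Sigma]=0\in H_2(W,\p W;\RR)$, which is (i)$\Leftrightarrow$(ii). For (ii)$\Leftrightarrow$(iii) I use the perfect intersection pairing $H_2(W,\p W;\RR)\times H_2(W;\RR)\to\RR$ and compute $H_2(W;\RR)$ from the same Mayer--Vietoris sequence: it is an extension of $\coker\bigl(H_2(\nN(\iI_0);\RR)\to H_2(M;\RR)\bigr)$, whose classes push into $\{1\}\times M$ and hence are disjoint from $\roundcocore_\Sigma$, by $\ker\bigl(H_1(\nN(\iI_0);\RR)\to H_1(M;\RR)\oplus H_1(\roundhandle_\Sigma;\RR)\bigr)$. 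A class in this kernel must die in $H_1(\roundhandle_\Sigma;\RR)=H_1(\Sigma;\RR)\oplus\RR$, which (using the single relation on the $[\p_j\Sigma]$ and that all $\mu_j$ map to the same generator of the $S^1$-factor) forces it to have the form $k(\lambda_1+\dots+\lambda_N)+\sum_j m_j\mu_j$ with $\sum_j m_j=0$, and must also be trivial in $H_1(M;\RR)$; capping such a class by $k$ parallel copies of $-\Sigma\times\{\mathrm{pt}\}$ together with chains of the form $(\mathrm{arc})\times S^1$ and a $2$-chain in $M$ yields a $2$-cycle of $W$ meeting $\roundcocore_\Sigma$ with multiplicity $k$. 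Thus $[\roundcocore_\Sigma]\neq0$ exactly when such a kernel element with $k>0$ exists, which is the negation of (iii).

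For the strong-cobordism conditions I invoke the standard collar deformation (\cite{Eliashberg:cap}*{Prop.~4.1}): since $\omega$ already dominates a supported contact structure on $M'_{\text{convex}}$ (and, after capping $M'_{\text{flat}}$ with a symplectic Lefschetz fibration $X$ as in \cite{Eliashberg:cap}, on all of $M'$), the cobordism can be made strong precisely when $\omega$ restricts to an exact $2$-form on the relevant positive boundary, i.e.\ when the corresponding de Rham class vanishes. Since the connecting homomorphism carries $\PD_W([\omega])$ to $\pm\PD_{\p W}([\omega|_{T\p W}])$ and $h=0$, that class is Poincar\'e dual in the boundary to $\pm c\,[\p\roundcocore_\Sigma]$ (restricted to the convex part), so with $c>0$ it vanishes iff the appropriate restriction of $[\p\roundcocore_\Sigma]$ is zero; this is (i)$\Leftrightarrow$(ii). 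For (ii)$\Leftrightarrow$(iii), $\p\roundcocore_\Sigma$ is the union of the two circles $\{p\}\times S^1\subset\Sigma_\pm\times S^1$, whose classes are the common images of $\mu_1^\pm,\dots,\mu_N^\pm$; running Mayer--Vietoris for $M'=(M\setminus\nN(\iI_0))\cup(\Sigma_+\times S^1)\cup(\Sigma_-\times S^1)$, a $2$-cycle of $M'$ detecting $[\p\roundcocore_\Sigma]$ is built from $k_\pm$ parallel copies of $-\Sigma_\pm\times\{\mathrm{pt}\}$, chains $(\mathrm{arc})\times S^1$ (whence $\sum_j m_j^\pm=0$) and a $2$-chain in $M\setminus\nN(\iI_0)$ capping off $k_+\sum_j\lambda_j^+ + k_-\sum_j\lambda_j^- + \sum_j m_j^+\mu_j^+ + \sum_j m_j^-\mu_j^-$, and its intersection with $\p\roundcocore_\Sigma$ is $k_++k_-$; so $[\p\roundcocore_\Sigma]\neq0$ iff such integers with $k_-+k_+>0$ exist. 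When $M'_{\text{flat}}\neq\emptyset$ the $\Sigma_-\times S^1$ end lies in $M'_{\text{flat}}$ and is removed by the capping, leaving only the $+$-side and the stated condition in $H_1(M'_{\text{convex}})$; the case $M'_{\text{flat}}=\emptyset$, $M'=M'_{\text{convex}}$ is the first version.

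The conceptual skeleton above is short, but the real labor is the homological bookkeeping: keeping track of the gluing maps $(s,t)\mapsto(s,\pm1,\pm t)$ and of the signed bases $\{\mu_j^\pm,\lambda_j^\pm\}$, and carrying out the three slightly different Mayer--Vietoris computations so that the obstruction sets come out \emph{exactly} as conditions (iii), with the constraints $\sum_j m_j=0$ and $k>0$ (resp.\ $k_-+k_+>0$) appearing correctly; one must also verify that $h$ and $\omega$ can be chosen so that $\PD([\omega])=[0,1]\times[h]+c\,[\roundcocore_\Sigma]$ holds on the nose and that $c>0$ follows from the positivity of $\omega$ on $\roundcore_\Sigma$ in Theorem~\ref{thm:roundHandles}. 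I expect this last translation --- matching the Mayer--Vietoris kernels against the explicitly written integral homology conditions, especially in the mixed case where $M'_{\text{flat}}$ and $M'_{\text{convex}}$ are both nonempty --- to be the main obstacle.
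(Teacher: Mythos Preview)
Your approach is essentially the paper's: the same Mayer--Vietoris decompositions of $W$ and of $M'$, the same characterization of the kernels (yielding exactly the integral conditions in (iii)), and the same reduction of ``strong cobordism'' to exactness of $\omega$ on the relevant boundary component via the collar deformation.

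One point deserves more care. Your computation $c=\int_{\roundcore_\Sigma}\omega$ by ``pairing $\PD([\omega])$ with $[\roundcore_\Sigma]\in H_2(W,\{0\}\times M)$'' is not well posed: $[\omega]\in H^2_\dR(W)$ does not pair canonically with relative classes in $H_2(W,M)$, since changing the representative of $[\roundcore_\Sigma]$ by a $2$-cycle in $M$ shifts the integral by its $\Omega$-period. The paper instead verifies the formula by pairing with \emph{closed} test cycles in $H_2(W)$: for each $\Gamma=k\sum_j\lambda_j+\sum_j m_j\mu_j$ in the Mayer--Vietoris kernel it builds $A_\Gamma=C_M+[L_\ell]+k[\roundcore_\Sigma]$, where $C_M$ is a $2$-chain in $M$ with $\partial C_M=\Gamma$ and $L_\ell$ is assembled from pieces $(\text{arc})\times\{*\}\times S^1$ in the handle together with Lagrangian cylinders over Legendrian $\mu_j$'s, so that $\int_{L_\ell}\omega=0$ (this uses an explicit computation in the model handle, not just formal homology). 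Equating $\int_{A_\Gamma}\omega$ with $A_\Gamma\bullet([0,1]\times[h]+c\,[\roundcocore_\Sigma])$ gives
\[
c=\int_{\roundcore_\Sigma}\omega+\tfrac{1}{k}\Bigl(\textstyle\int_{C_M}\Omega - C_M\bullet[h]\Bigr),
\]
and one must check this is independent of $\Gamma$; the paper does so by writing $\Omega=C\,d\lambda+\Omega_0$ with $\Omega_0$ supported near $h$ and showing the bracket equals $C\int_{\partial C_M}\lambda$, which is proportional to $k$. In the concave case this simplifies to $c=\int_{\roundcore_\Sigma}\omega+\tfrac{1}{k}\int_\Gamma\lambda$, strictly larger than $\int_{\roundcore_\Sigma}\omega$. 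So your conclusion $c>0$ survives, but the exact value and the independence-of-choices step need this extra argument. For the last case with $M'_{\text{flat}}\ne\emptyset$, the paper also invokes the observation that $\iI_0$ then separates $M$, so the $+$ and $-$ halves of any class \eqref{eqn:theKernel} vanish in $H_1(M\setminus\iI_0)$ independently; your sketch gestures at this but does not state it.
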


We now discuss some applications of the capping and decoupling
cobordisms to Embedded Contact Homology (cf.~\cite{Hutchings:ICM}).
Recall that for a closed contact $3$-manifold $(M,\xi)$ and homology
class $h \in H_1(M)$, $\ECH_*(M,\xi ; h)$ is the homology of a chain
complex generated by sets of Reeb orbits with multiplicities whose
homology classes add up to~$h$, with a differential counting embedded
index~$1$ holomorphic curves with positive and negative cylindrical ends in
the symplectization $\RR\times M$.  Similarly, counting embedded
index~$2$ holomorphic curves through a generic point in~$M$ yields 
the so-called $U$-map,
$$
U : \ECH_*(M,\xi ; h) \to \ECH_{*-2}(M,\xi ; h).
$$
The \defin{ECH contact invariant}
$$
c(M,\xi) \in \ECH_*(M,\xi ; 0)
$$
is the homology class represented by the ``empty orbit set''. 
It is equivalent via an isomorphism of Taubes \cite{Taubes:ECH=SWF5} to
a corresponding invariant in Seiberg-Witten theory, and 
also to the Ozsv\'ath-Szab\'o contact invariant \cite{OzsvathSzabo:contact}
by recent work of Colin-Ghiggini-Honda
\cite{ColinGhigginiHonda:HF=ECH3} and independently
Kutluhan-Lee-Taubes \cite{KutluhanLeeTaubes5}.
Like those invariants, its vanishing gives an obstruction to strong 
symplectic fillings, and a version with twisted coefficients also obstructs
weak fillings.

\begin{remark}
Technically the definitions of $\ECH_*(M,\xi ; h)$ and
$c(M,\xi)$ depend not just on~$\xi$ but also on a choice of contact form
and almost complex structure.  However, Taubes' isomorphism to
Seiberg-Witten Floer homology implies that they are actually independent
of these choices, thus we are safe in writing $\ECH_*(M,\xi ; h)$
without explicitly mentioning the extra data.
\end{remark}

An argument due to Eliashberg\footnote{In the appendix of 
\cite{Yau:overtwisted}, Eliashberg sketches an argument to show that every
overtwisted contact manifold has trivial contact homology, and this argument
also implies the vanishing of the ECH contact invariant.} 
shows that $c(M,\xi) = 0$
whenever $(M,\xi)$ is overtwisted, and a much more general computation
in \cite{Wendl:openbook2} established the same result whenever
$(M,\xi)$ has planar $k$-torsion for any $k \ge 0$.  The latter result can 
now be recovered as a consequence of Theorem~\ref{thm:overtwisted},
using a result recently announced by Hutchings \cite{Hutchings:SFT} that
$(M_-,\xi_-) \cob (M_+,\xi_+)$ and $c(M_+,\xi_+) = 0$ imply
$c(M_-,\xi_-) = 0$.  This is highly non-obvious since
the cobordisms we construct are never exact (see Remark~\ref{remark:notExact}),
and non-exact cobordisms do not in general give rise to well-behaved maps
on~ECH in its standard form.  
The situation becomes slightly simpler however under stricter
assumptions, e.g.~Hutchings and Taubes have explained in
\cite{HutchingsTaubes:Arnold2} how to construct such maps for the case
$h=0$ whenever $(W,\omega)$ is a
strong cobordism with an exact symplectic form, sometimes called a
``weakly exact'' cobordism:

\begin{prop}[\cite{HutchingsTaubes:Arnold2}]
\label{prop:ECH}
Suppose $(W,\omega)$ is a strong symplectic cobordism from
$(M_-,\xi_-)$ to $(M_+,\xi_+)$ such that~$\omega$ is exact.
Then there is a $U$-equivariant map
$$
\ECH_*(M_+,\xi_+ ; 0) \to \ECH_*(M_-,\xi_- ; 0)
$$
that takes $c(M_+,\xi_+)$ to $c(M_-,\xi_-)$.
\end{prop}

\begin{remark}
For the example of a $2$-handle cobordism constructed from an ordinary
open book decomposition, the analogue of Proposition~\ref{prop:ECH} in
Heegaard Floer homology has been established by John Baldwin
\cite{Baldwin:cap}.
\end{remark}

Let us now discuss a conjectural generalization
of Proposition~\ref{prop:ECH} which could remove all conditions on~$\omega$.
Recall that for any closed $2$-form $\Omega$ on~$M$, one can define ECH
with \defin{twisted coefficients} in the group ring
$\ZZ[H_2(M) / \ker\Omega]$, which we shall abbreviate by
$$
\ECH(M,\xi ; h,\Omega) := \ECH\big(M,\xi ; h, \ZZ[H_2(M) / \ker\Omega]\big).
$$
Here the differential keeps track of the homology classes in
$H_2(M) / \ker\Omega$ of the holomorphic curves being counted, see 
\cite{HutchingsSullivan:T3}.  The $U$-map can again be defined as a
degree~$-2$ map on $\ECH(M,\xi ; h,\Omega)$, and
the twisted contact invariant
$c(M,\xi ; \Omega)$ is again the homology class in $\ECH(M,\xi ; 0,\Omega)$ 
generated by the empty orbit set.  The vanishing results in \cite{Wendl:openbook2}
give convincing evidence that a more general version of the map in
Proposition~\ref{prop:ECH} should exist,
in particular with the following consequence:

\begin{conj}
\label{conj:c}
Suppose $(W,\omega)$ is a $\Sigma$-capping or $\Sigma$-decoupling cobordism
from $(M_-,\xi_-)$ to $(M_+,\xi_+)$, and write $\Omega_\pm = 
\omega|_{TM\pm}$.  Then:
\begin{enumerate}
\item If $c(M_+,\xi_+ ; \Omega_+)$ vanishes, then so
does $c(M_-,\xi_- ; \Omega_-)$.
\item If $c(M_+,\xi_+ ; \Omega_+)$ is in the image of
the map $U^k$ on $\ECH(M_+,\xi_+ ; 0, \Omega_+)$ for
some $k \in \NN$, then $c(M_-,\xi_- ; \Omega_-)$
is in the image of $U^k$ on
$\ECH(M_-,\xi_- ; 0, \Omega_-)$.
\end{enumerate}
\end{conj}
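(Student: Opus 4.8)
Since this is a conjecture rather than a theorem, what follows is only a sketch of the strategy that the foliation construction of \S\ref{sec:holomorphic} is meant to support. The plan is to build the required map directly from a count of holomorphic curves in the completion $\widehat{W}$ of the capping or decoupling cobordism $(W,\omega)$. First I would complete the cobordism by attaching cylindrical ends $[0,\infty)\times M_+$ and $(-\infty,0]\times M_-$ — after the collar deformations that make both boundary components suitably contact type, as in Theorems~\ref{thm:2handles} and~\ref{thm:roundHandles} — and choose an $\omega$-compatible almost complex structure $J$ that is cylindrical on the ends and restricts on $\handle_\Sigma$ (resp.\ $\roundhandle_\Sigma$) to the structure adapted to the fibration $\boldsymbol{\pi}'$. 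Step one is then to establish the $J$-holomorphic foliation promised in \S\ref{sec:holomorphic}: away from the handle its leaves are trivial cylinders $\RR\times\gamma$ over Reeb orbits, over the handle they are the completed pages of $\boldsymbol{\pi}'$, and there is one distinguished leaf obtained by completing the symplectic core $\core_\Sigma$ (resp.\ $\roundcore_\Sigma$) to a finite-energy curve. Positivity of intersections with this foliation is what provides a priori control on the relevant moduli spaces, substituting for the action filtration that one loses once $\omega$ is non-exact.

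The second step is to define the map. Because $(W,\omega)$ is never an exact cobordism (Remark~\ref{remark:notExact}), the construction of \cite{Hutchings:quantitative} does not apply directly; instead one counts ECH-index-$0$ currents in $\widehat{W}$, weighted by their relative homology classes, with coefficients in the Novikov completion of $\ZZ[H_2(M_\pm)/\ker\Omega_\pm]$ with respect to $\Omega_\pm$-area. The two coefficient rings are identified along $H_2(W)$, whose structure is exactly what the Mayer--Vietoris computations underlying Theorems~\ref{thm:2handleCohomology} and~\ref{thm:roundHandleCohomology} describe, and the $\Omega$-area provides the filtration that makes the resulting infinite sums converge, precisely as in the twisted ECH of $T^3$ in \cite{HutchingsSullivan:T3}. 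One then checks, by the usual parametrized moduli-space arguments, that this count is a chain map, that it is $U$-equivariant, and that it is independent of the auxiliary data ($J$, the contact forms, and the symplectic deformation of $\omega$). Granting this, part~(2) of the conjecture follows from part~(1) by $U$-equivariance.

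For part~(1) one evaluates the induced map $\Phi_W$ on the class of the empty orbit set. Using that $c(M,\xi;\Omega)$ is represented by the empty orbit set for a suitably adapted contact form, and that — as in the explicit computations of \cite{Wendl:openbook2} — the distinguished leaves of the foliation are exactly the index-$0$ currents that govern its image (the trivial cylinders and the completed core contributing a leading coefficient $\pm1$, all other contributions having strictly positive $\Omega_-$-area), one obtains $\Phi_W\big(c(M_+,\xi_+;\Omega_+)\big) = u\cdot c(M_-,\xi_-;\Omega_-)$ for a unit $u$ in the Novikov ring. Since $\Phi_W$ is a well-defined map and $u$ is invertible, the vanishing of $c(M_+,\xi_+;\Omega_+)$ forces that of $c(M_-,\xi_-;\Omega_-)$, which is part~(1).

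The main obstacle is the analysis that the second step sweeps under the rug: ECH transversality for holomorphic \emph{currents} in a non-exact cobordism — in particular dealing with multiply covered leaves of the foliation and with the absence of an exact taming form — and the compactness needed to guarantee that only finitely many currents contribute below each area bound. A more realistic route to a fully rigorous proof would probably bypass the direct curve count and instead produce the cobordism map on twisted Seiberg--Witten Floer homology, transporting it to ECH via Taubes' isomorphism \cite{Taubes:ECH=SWF5} and using the cobordism-map machinery of \cite{KutluhanLeeTaubes1}; one would then still appeal to the foliation to identify the image of the contact class and thereby recover the two stated consequences.
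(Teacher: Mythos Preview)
Since the statement is a conjecture, the paper does not prove it; the paragraphs following Conjecture~\ref{conj:c} and the constructions in \S\ref{sec:holomorphic} only outline a strategy.  Your sketch is in the same spirit---Novikov-completed twisted coefficients, a holomorphic-curve cobordism map, and the foliation as the controlling geometric input---so in broad outline it matches the paper's intended approach.

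Several details of your description are inaccurate.  The leaves of the foliation away from the handle are not trivial cylinders $\RR\times\gamma$ but $J_0$-holomorphic lifts of the \emph{pages} of~$\boldsymbol{\pi}$ (Proposition~\ref{prop:Jhol}); trivial cylinders occur only over the closed Reeb orbits in $B \cup \iI \cup \p M_0$.  The completed core is not a single distinguished leaf but an $S^1$-family of leaves.  The paper also stresses that the ends should carry stable Hamiltonian structures rather than honest contact forms (via the construction in \cite{NiederkruegerWendl}), and that components of $M'_{\text{flat}}$ with closed pages would require Periodic Floer Homology rather than ECH; your sketch assumes contact-type boundaries throughout and ignores the flat components entirely.

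The substantive gap is in your justification of $\Phi_W\big(c(M_+,\xi_+;\Omega_+)\big) = u \cdot c(M_-,\xi_-;\Omega_-)$.  On the chain level, the image of the empty orbit set is governed by finite-energy curves with \emph{no positive ends}, and the paper's key input here is Proposition~\ref{prop:noPositive}: every somewhere injective such curve is an $S^1$-translation of the completed core.  These core curves carry negative ends on nontrivial orbit sets and come in a one-parameter family (hence have index at least one), so the expected conclusion is that the \emph{only} index-zero contribution from $\emptyset_+$ to $\emptyset_-$ is the empty curve itself, giving $\Phi_W(\emptyset_+) = \emptyset_-$ on the nose rather than a unit multiple with higher-area corrections.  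Your phrasing ``the trivial cylinders and the completed core contributing a leading coefficient $\pm 1$'' conflates the role of the empty curve with that of the core and does not isolate this uniqueness-plus-index argument; without it the relation $\Phi_W(c_+) = u\cdot c_-$ is unjustified, and neither part of the conjecture follows from the mere existence of~$\Phi_W$.
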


The first part of the conjecture would reduce
both the untwisted and twisted vanishing results in \cite{Wendl:openbook2}
to the fact, proved essentially by Eliashberg in the appendix of 
\cite{Yau:overtwisted}, that the fully twisted contact invariant vanishes
for every overtwisted contact manifold.  The second part is related to
another result proved in \cite{Wendl:openbook2}, namely the twisted ECH 
version of the planarity obstruction of
Oszv\'ath-Stipsicz-Szab\'o \cite{OzsvathSzaboStipsicz:planar}
in Heegaard Floer homology: if $(M,\xi)$ is planar, then 
$c(M,\xi ; \Omega)$ is in the image of $U^k$ for all~$k$ and all~$\Omega$.
If the conjecture holds, then this fact follows from 
Theorem~\ref{thm:planar} and the computation of
$\ECH(S^3,\xi_0)$.

The obvious way to try to prove Conjecture~\ref{conj:c} would be by
constructing a $U$-equivariant map
$$
\ECH(M_+,\xi_+ ; 0 , \Omega_+) \to
\ECH(M_-,\xi_- ; 0 , \Omega_-)
$$
which takes $c(M_+,\xi_+ ; \Omega_+)$ to
$c(M_-,\xi_- ; \Omega_-)$.  Due to the non-exactness
of~$\omega$ and a resulting lack of energy bounds, it seems unlikely 
that such a map would exist in general, but a more probable scenario is
to obtain a map
$$
\ECH(M_+,\xi_+ ; 0 , \Lambda_\omega) \to
\ECH(M_-,\xi_- ; 0 , \Lambda_\omega),
$$
where $\Lambda_\omega$ is a Novikov completion of $\ZZ[H_2(W) / \ker\omega]$,
and we take advantage of the natural inclusions
$$
H_2(M_\pm) / \ker\Omega_\pm \hookrightarrow H_2(W) / \ker\omega
$$
to define the ECH of $(M_\pm,\xi_\pm)$ with coefficients in~$\Lambda_\omega$.
In cases where $M_+$ has connected components with closed leaves, one
would expect this map to involve also the Periodic Floer Homology
(cf.~\cite{HutchingsSullivan:DehnTwist}) of the resulting mapping tori.
Defining such a map would require a slightly more careful construction
of the weak cobordism $(W,\omega)$, such that both boundary components
inherit stable Hamiltonian structures which can be used to attach
cylindrical ends and define reasonable moduli spaces of finite energy
punctured holomorphic curves.  This can always be done due to a
construction in \cite{NiederkruegerWendl}, which shows that suitable
stable Hamiltonian structures exist for any desired cohomology class on
the boundary.  It is probably also useful to observe that for an
intelligent choice of data, the holomorphic curves in $(W,\omega)$ with
no positive ends can be enumerated precisely: we will show in
Proposition~\ref{prop:noPositive} that all of them arise from the
symplectic core of the handle.

\section{Further applications, examples and discussion}
\label{sec:consequences}

We shall now give some concrete examples of capping and decoupling
cobordisms and survey a few more applications, including new proofs
of several known results and one or two new ones.

\subsection{The Gromov-Eliashberg theorem using holomorphic spheres}
\label{sec:GromovEliashberg}

In \cites{Wendl:openbook2,NiederkruegerWendl}, 
holomorphic curve arguments were used to show that planar torsion is a
filling obstruction, but Theorems~\ref{thm:overtwisted}
and~\ref{thm:weak} make these proofs much easier by using essentially
``soft'' methods to reduce them to the well-known result of
Gromov-Eliashberg that overtwisted contact manifolds are not weakly
fillable.  This does not of course make everything elementary, as
the Gromov-Eliashberg theorem still requires some technology---the original
proof used a ``Bishop family'' of holomorphic disks with totally real
boundary, and these days one can instead use punctured holomorphic curves,
Seiberg-Witten theory or Heegaard Floer homology if preferred.
While this technological overhead is probably not
removable, we can use a decoupling cobordism to simplify the level
of technology a tiny bit: namely we can reduce it to the following
standard fact whose proof requires only \emph{closed} holomorphic spheres,
e.g.~the methods used in \cite{McDuff:rationalRuled}.

\begin{lemma}
\label{lemma:spheres}
If $(W,\omega)$ is a connected weak filling of a nonempty contact manifold
$(M,\xi)$, then it contains no embedded symplectic sphere with
vanishing self-intersection.
\end{lemma}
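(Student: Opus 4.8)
The plan is to derive a contradiction from the existence of an embedded symplectic sphere $S \subset W$ with $[S]\cdot[S] = 0$, using a standard positivity/deformation argument. First I would recall that since $(W,\omega)$ is a weak filling, one can arrange by a collar deformation (or by appealing directly to the weak filling structure) that $\omega$ is positive on $\xi$ along $M = \partial W$; what matters is that $\omega|_{TM}$ is a closed $2$-form dominating $\xi$, so in particular $[\omega|_{TM}]$ pairs nontrivially with something. The heart of the argument is McDuff's classification of symplectic $4$-manifolds containing a symplectic sphere of nonnegative self-intersection: an embedded symplectic $0$-sphere $S$ in a symplectic $4$-manifold forces strong structural conclusions. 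More precisely, I would invoke that $W$ can be realized as (an open subset of) a symplectic ruled surface over the sphere, with $S$ isotopic to a fiber, by the argument of \cite{McDuff:rationalRuled}; equivalently, one uses the moduli space of $J$-holomorphic spheres in class $[S]$ for a generic compatible $J$ to produce a foliation (with possible nodal degenerations) of a neighborhood — and since $[S]^2 = 0$, these spheres do not intersect each other.

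Next I would examine this $J$-holomorphic foliation near the boundary $M = \partial W$. Choose $J$ compatible with $\omega$ and adjusted near $\partial W$ so that $\xi$ is $J$-invariant and $J$ is of contact type along $M$ (this is possible for any weak filling after a deformation, as in \cite{NiederkruegerWendl} or \cite{Eliashberg:cap}); this makes $M$ $J$-convex in the appropriate sense so that closed $J$-holomorphic spheres cannot touch $\partial W$ — a sphere meeting $M$ would, by the maximum principle applied to a suitable plurisubharmonic function near the boundary, be forced into $M$, which is impossible for a closed holomorphic curve in the nonempty (hence $3$-dimensional) hypersurface. Hence the sphere $S$, and all nearby members of its moduli space, stay in the interior. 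The moduli space of such spheres (class $[S]$, $[S]^2 = 0$, genus $0$) is then compact modulo bubbling: but bubbling in class $[S]$ is excluded since $[S]^2 = 0$ and $\omega$ is positive on any sphere component, so energy considerations plus the adjunction/positivity of intersections rule out nontrivial nodal configurations — any bubble component would have a neighbor with negative self-intersection or zero area. Therefore the moduli space is a closed surface foliating $W$ (or at least an open-and-closed nonempty subset of it), forcing $W$ to be closed, contradicting $\partial W = M \neq \emptyset$.

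The main obstacle I expect is handling the behavior of the $J$-holomorphic spheres at the boundary correctly: one must be careful that "weak" filling (as opposed to strong) still permits choosing $J$ so that no holomorphic sphere escapes to or through $\partial W$. For a strong filling this is the classical convexity argument; for a weak filling one needs the observation that after deforming $\omega$ in a collar (using that $\omega|_{TM}$ dominates $\xi$), one may assume $\omega$ is of contact type near $\partial W$, OR one argues more directly that a closed holomorphic curve cannot have an interior tangency with a hypersurface along which $J$ preserves a contact structure dominated by $\omega$ — this is a local maximum-principle statement. A secondary technical point is that the foliation argument in \cite{McDuff:rationalRuled} is stated for closed symplectic manifolds, so one applies it to the interior of $W$ and must rule out the spheres degenerating or running off to the boundary; but the two points are really the same point, and once it is settled the contradiction (a nonempty boundary that cannot exist) is immediate. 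Everything else is routine positivity of intersections and Gromov compactness.
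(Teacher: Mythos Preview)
Your approach is essentially the same as the paper's: choose a compatible $J$ making the boundary $J$-convex and the sphere $J$-holomorphic, then use McDuff's argument that the moduli space of spheres in class $[S]$ foliates~$W$, which is incompatible with a nonempty $J$-convex boundary. One minor imprecision: nodal degenerations in class $[S]$ need not be excluded entirely (the paper explicitly allows finitely many nodal fibers), but this does not affect the argument since such fibers still fit into the foliation picture and the contradiction with $J$-convexity at the boundary goes through unchanged.
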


This lemma follows essentially from McDuff's results 
\cite{McDuff:rationalRuled}, but by today's standards it is also easy to 
prove on its own: if one chooses a compatible
almost complex structure to make the boundary $J$-convex and 
the embedded symplectic sphere $J$-holomorphic,
then vanishing self-intersection implies that the latter lives in a smooth
$2$-dimensional moduli space of holomorphic spheres that foliate~$W$
(except at finitely many nodal singularities).  This forces some leaf of
the foliation to hit the boundary tangentially, thus contradicting
$J$-convexity.

\begin{coru}[\cites{Gromov,Eliashberg:diskFilling}]
Every weakly fillable contact manifold is tight.
\end{coru}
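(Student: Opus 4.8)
The plan is to deduce the corollary from Lemma~\ref{lemma:spheres} together with Theorem~\ref{thm:weak}, using overtwistedness to produce the forbidden sphere. First I would argue by contradiction: suppose $(M,\xi)$ is a weakly fillable contact manifold that is not tight, i.e.\ it is overtwisted, and let $(W_0,\omega_0)$ be a connected weak filling (replacing $W_0$ by a connected component meeting $M$ if necessary, and noting $M\neq\emptyset$). The key observation is that an overtwisted contact manifold trivially has $\Omega$-separating planar $0$-torsion for \emph{every} closed $2$-form $\Omega$ with $\Omega|_\xi>0$: an overtwisted disk furnishes a planar torsion domain with empty set of relevant tori, so the separating condition $\int_T\Omega=0$ is vacuous. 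Thus I may apply Theorem~\ref{thm:weak} with $\Omega:=\omega_0|_{TM}$ to obtain a weak symplectic cobordism $(W_1,\omega_1)$ from $(M,\xi)$ to an overtwisted contact manifold $(M_\OT,\xi_\OT)$ with $\omega_1|_{TM}=\Omega=\omega_0|_{TM}$.

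Next I would glue. Since the restrictions of $\omega_0$ and $\omega_1$ to $TM$ agree, Lemma~\ref{lemma:gluing} lets me glue $(W_0,\omega_0)$ to $(W_1,\omega_1)$ along $M$, producing a connected weak filling $(W,\omega)$ of $(M_\OT,\xi_\OT)$. But this contradicts the Gromov--Eliashberg theorem (in its weak form), which asserts that an overtwisted contact $3$-manifold admits no weak filling. To keep the technology self-contained and in the spirit of \S\ref{sec:GromovEliashberg}, I would instead route the final step through the decoupling construction: rather than glue a filling directly to an overtwisted manifold, use the capping/decoupling cobordisms of Theorems~\ref{thm:2handles}--\ref{thm:roundHandles} applied to the summed open book supporting an overtwisted $(M_\OT,\xi_\OT)$ to cap it off by a weak cobordism into something carrying a symplectically embedded $0$-sphere, or directly invoke Theorem~\ref{thm:caps} (an overtwisted contact manifold contains an $\Omega$-separating partially planar domain, namely a planar piece coming from the overtwisted disk). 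Theorem~\ref{thm:caps} then yields a symplectic cap $(W',\omega')$ of $(M_\OT,\xi_\OT)$ with $\omega'|_{TM_\OT}=\Omega$ containing a symplectically embedded sphere $S$ with $S\cdot S=0$.

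Finally I would glue $(W,\omega)$ to this cap $(W',\omega')$ along $(M_\OT,\xi_\OT)$, again using Lemma~\ref{lemma:gluing} since the boundary restrictions of the symplectic forms match. The result is a \emph{closed} symplectic $4$-manifold, or more precisely, running the argument one step earlier, a connected weak filling $(\widehat W,\widehat\omega)$ of a nonempty contact manifold that contains the symplectically embedded $0$-sphere $S$ inherited from $W'$. This directly contradicts Lemma~\ref{lemma:spheres}. Hence no such $(M,\xi)$ exists, i.e.\ every weakly fillable contact manifold is tight.

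The main obstacle is the bookkeeping needed to verify that overtwistedness does supply an $\Omega$-separating partially planar (indeed planar $0$-torsion) domain for the \emph{given} cohomology class $\Omega=\omega_0|_{TM}$, and that the gluings preserve the weak cobordism condition with matching boundary forms; this is where Lemma~\ref{lemma:gluing} and Definitions~\ref{defn:planar}--\ref{defn:planarTorsion} do the real work. Everything else is formal: the contradiction with Lemma~\ref{lemma:spheres} is immediate once the sphere of square zero has been transported into a connected weak filling of a nonempty contact manifold by composing the filling, the decoupling/capping cobordism of Theorem~\ref{thm:weak} or~\ref{thm:caps}, and the gluing lemma.
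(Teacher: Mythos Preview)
Your proposal ultimately lands on the same idea as the paper, but it reaches it by an unnecessarily circuitous path, and a couple of the intermediate claims are not quite right.

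First, the detour through Theorem~\ref{thm:weak} buys you nothing: $(M,\xi)$ is already overtwisted, i.e.\ already has planar $0$-torsion, so producing a cobordism to \emph{another} overtwisted manifold is vacuous (and, as you noticed yourself, circular if you then try to invoke Gromov--Eliashberg). The paper skips this entirely and works directly with $(M,\xi)$.

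Second, your justification of the $\Omega$-separating condition is incorrect. The planar $0$-torsion domain does \emph{not} have an empty set of relevant interface tori: the planar piece $M_0^P$ is a solid torus with disk-like pages, and its boundary $T=\partial M_0^P$ is the one interface torus that must be checked. The separating condition holds because $T$ bounds a solid torus in~$M$, hence $[T]=0\in H_2(M)$ and $\int_T\Omega=0$ automatically. This is exactly what the paper uses.

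Third, invoking Theorem~\ref{thm:caps} as a black box does not give a contradiction with Lemma~\ref{lemma:spheres}: gluing a filling to a cap yields a \emph{closed} symplectic $4$-manifold, and such manifolds with symplectic $0$-spheres certainly exist. Your phrase ``running the argument one step earlier'' is precisely what the paper does explicitly: attach a single $\DD$-decoupling cobordism along~$T$, cap the resulting $S^1\times S^2$ component with $\DD\times S^2$, and observe that what remains is a connected weak filling of the nonempty contact manifold $(M'_{\text{convex}},\xi')$ containing the $0$-sphere. That is the direct argument; your proposal is a correct but convoluted repackaging of it.
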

\begin{proof}
A schematic diagram of the proof is shown in Figure~\ref{fig:overtwisted}.
Suppose $(W,\omega)$ is a weak filling of $(M,\xi)$ and the latter is
overtwisted.  Then $(M,\xi)$ contains a planar $0$-torsion 
domain\footnote{The fact that overtwistedness implies planar $0$-torsion
relies on Eliashberg's classification of overtwisted contact structures 
\cite{Eliashberg:overtwisted}, quite a large result in itself.  The
original ``Bishop disk'' argument of Gromov and Eliashberg 
had the advantage of not requiring
this.} $M_0$, whose planar piece $M_0^P$ is a solid torus with disk-like
pages, attached along an interface torus $T = \p M_0^P$ to another
subdomain whose pages are not disks.  Since $[T] = 0 \in H_2(M)$,
$\int_T \omega = 0$ and we can attach a $\DD$-decoupling cobordism
along~$T$, producing a larger symplectic manifold $(W',\omega)$ whose
boundary has two connected components
$$
\p W' = M'_{\text{flat}} \sqcup M'_{\text{convex}},
$$
of which the latter carries a contact structure~$\xi'$ dominated
by~$\omega$.  The component $M'_{\text{flat}}$ has closed sphere-like pages,
and is thus the trivial symplectic fibration $S^1 \times S^2 \to S^1$.
After capping $M'_{\text{flat}}$ by a symplectic fibration
$\DD \times S^2 \to \DD$, we then obtain a weak filling of
$(M'_{\text{convex}},\xi')$ containing a symplectic sphere with vanishing
self-intersection, contradicting Lemma~\ref{lemma:spheres}.
\end{proof}

\begin{figure}
\begin{center}
\includegraphics{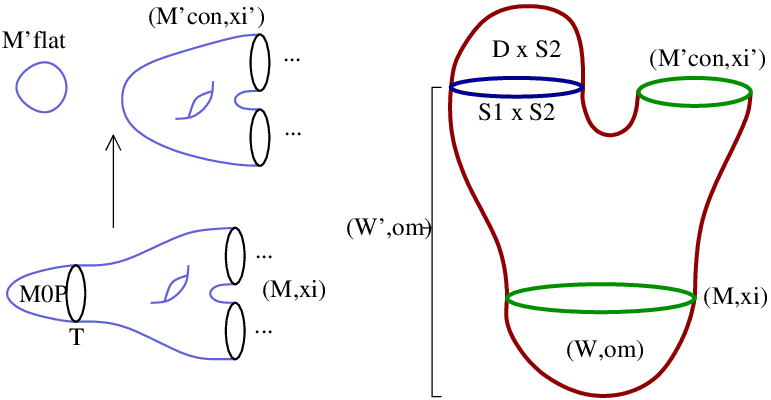}
\caption{\label{fig:overtwisted}
A proof of the Gromov-Eliashberg theorem using a $\DD$-decoupling cobordism.
The left shows the effect on the pages when a round handle $\DD \times \AA$
is attached to a planar $0$-torsion domain.  The right shows the resulting
cobordism and consequent weak filling which furnishes a contradiction
to Lemma~\ref{lemma:spheres}.}
\end{center}
\end{figure}

\begin{remark}
A related argument appears in \cite{Gay:GirouxTorsion}, using the fact
that overtwisted contact manifolds always have Giroux torsion;
see also \S\ref{sec:Gay} below.
\end{remark}

\subsection{Eliashberg's cobordisms from $T^3$ to $S^3 \sqcup \ldots \sqcup S^3$}
\label{sec:EliashbergT3}

Let $T^3 = S^1 \times S^1 \times S^1$ with coordinates $(\eta,\phi,\theta)$
and define for $n \in \NN$ the contact structure
\begin{equation}
\label{eqn:xin}
\xi_n = \ker\left[ \cos(2\pi n \eta) \, d\theta + \sin(2\pi n \eta)\, d\phi \right].
\end{equation}
These contact structures are all tight, but Eliashberg showed in
\cite{Eliashberg:fillableTorus} that they are not strongly fillable for 
$n \ge 2$, which follows from the fact that disjoint unions of multiple
copies of $(S^3,\xi_0)$ are not fillable, together with the following:

\begin{thmu}[\cite{Eliashberg:fillableTorus}]
For any $n \in \NN$, $(T^3,\xi_n)$ is symplectically cobordant to the
disjoint union of~$n$ copies of the tight $3$-sphere.
\end{thmu}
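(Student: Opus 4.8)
The plan is to realize $(T^3,\xi_n)$ as the convex boundary of a $\Sigma$-decoupling cobordism, where $\Sigma$ is a disk and the interface tori are chosen to cut the standard supporting open book of $(T^3,\xi_n)$ into $n$ pieces. Recall that $(T^3,\xi_n)$ is supported by a blown up summed open book with empty binding, $n$ interface tori $T_j = \{n\eta \in \ZZ - \text{something}\}$ (i.e.\ the tori $\{\eta = j/n\}$ for $j = 1,\dots,n$), and pages that are cylinders $S^1 \times (0,1)$ fibering over the $\phi$- or $\theta$-circle depending on the sign convention; concretely, the fibration $\pi$ on $T^3 \setminus \iI$ is locally $(\eta,\phi,\theta) \mapsto \theta$ with the sign of $\theta$ alternating across consecutive interface tori. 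Each $T_j$ is null-homologous in $T^3$ — it bounds on either side — so $\int_{T_j}\Omega = 0$ automatically for any closed $2$-form, and in particular the homological condition \eqref{eqn:homological} of Theorem~\ref{thm:roundHandles} is satisfied for any choice of orientations. I would take $M_0 = M = T^3$ and $\iI_0 = \iI = T_1 \cup \dots \cup T_n$, with $\Sigma = \DD$.

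**Key steps.** First I would write down explicitly the standard contact form for $\xi_n$ and a supporting blown up summed open book, verifying that near each interface torus it has exactly the normal form required in the setup preceding Theorem~\ref{thm:roundHandles} (Legendrian ruling on $T_j$, $\pi = \pm\phi$ on the two sides). Second, apply Theorem~\ref{thm:roundHandles} with $\omega$ any symplectic form on $[0,1]\times T^3$ dominating $\xi_n$ (e.g.\ $d(e^t\alpha_n)$ for a contact form $\alpha_n$, which is exact) and $\roundhandle_\DD = -\DD \times \AA$ attached along $\nN(\iI_0)$: the resulting $M'$ has a new blown up summed open book $\boldsymbol{\pi}'$ obtained by capping off each cylindrical page along both boundary circles. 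Since cutting $T^3$ along the $n$ tori $\{\eta = j/n\}$ produces $n$ cylinders $S^1 \times [j/n,(j+1)/n]$ each of which becomes, after gluing in two copies of $-\DD\times S^1$, a solid torus fibering over $S^1$ with disk pages — i.e.\ an open book on $S^3$ with annular... wait, with disk pages, which is the standard open book of $(S^3,\xi_0)$. Thus $M'_{\text{convex}} = \bigsqcup_{i=1}^n (S^3,\xi_0)$ and $M'_{\text{flat}} = \emptyset$ (no page closes up without binding here, since the disk pages have binding). Theorem~\ref{thm:roundHandles} then gives a weak symplectic cobordism from $(T^3,\xi_n)$ to $\bigsqcup^n (S^3,\xi_0)$. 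Third, I would upgrade this to a strong cobordism: since $\omega$ can be chosen exact and $\{0\}\times T^3$ made concave, the standard trick (cf.\ \cite{Eliashberg:cap}*{Prop.~4.1}, invoked in the discussion after Theorem~\ref{thm:2handles}) deforms $\omega$ near the negative boundary to make it convex with induced contact structure $\xi_n$; alternatively one checks the homological criterion in Theorem~\ref{thm:roundHandleCohomology}, noting $[\roundcocore_\DD] $ and its boundary class vanish because the relevant longitude/meridian combinations on the $T_j$ are killed in $H_1(T^3)$. This yields $(T^3,\xi_n)\cob \bigsqcup^n(S^3,\xi_0)$.

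**Main obstacle.** The genuinely delicate point is the explicit identification of the capped-off combinatorial data: one must choose the orientations of the $T_j$, the longitude/meridian framings $\{\mu_j,\lambda_j\}$, and the gluing coordinates $s\in S^1$ on $\p_j\DD$ so that after $\DD$-decoupling surgery the pieces are genuinely $(S^3,\xi_0)$ with the \emph{standard tight} contact structure and not merely diffeomorphic to $S^3$ with some a priori unknown supported contact structure. This is a bookkeeping computation with the homology identifications $[\p_j\Sigma_\pm\times *]\leftrightarrow\lambda_j^\pm$, $[*\times S^1]\leftrightarrow\mu_j^\pm$ spelled out before Theorem~\ref{thm:roundHandles}, combined with the fact that the disk open book supports the unique tight contact structure on $S^3$ — the uniqueness clause ("supports a contact structure, unique up to deformation, unless the pages are closed") in the review of blown up summed open books does the work, once one confirms the pages are disks. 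I expect everything else (existence of the dominating $\omega$, exactness, the concavity deformation) to be routine given the machinery already assembled; the one-line homological check via Theorem~\ref{thm:roundHandleCohomology}(iii) for strength is where I would be most careful, but since $H_1(T^3) = \ZZ^3$ is torsion-free and $\lambda_j = [S^1_\eta]$ while $\mu_j$ is the Reeb/Legendrian direction, the class $k\sum\lambda_j^+ + \sum m_j\mu_j^+$ is nontrivial in $H_1(T^3\setminus\iI_0)$ for $k>0$, so the strong cobordism exists.
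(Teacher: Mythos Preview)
Your approach has a genuine gap in the identification of~$M'$. Decoupling along \emph{all} interface tori does not produce copies of $(S^3,\xi_0)$: each irreducible subdomain of the summed open book on $(T^3,\xi_n)$ has \emph{cylindrical} pages, and $\DD$-decoupling surgery caps each boundary circle of such a cylinder by a disk, so the resulting pages are closed spheres. Thus $M' = M'_{\text{flat}}$ consists entirely of copies of $S^1\times S^2$ carrying symplectic $S^2$-fibrations, and $M'_{\text{convex}} = \emptyset$; no contact structure is induced at the positive end, and what you have built is a cap rather than a cobordism to $\bigsqcup^n(S^3,\xi_0)$. Concretely, your computation $(T^2\times I)\cup(-\DD\times S^1)\cup(-\DD\times S^1) \cong$ ``solid torus'' is wrong: by the gluing prescription before Theorem~\ref{thm:roundHandles}, both copies of $\p\DD$ are identified with the longitude~$\lambda_j$ (the page-boundary direction), so the same circle of $T^2$ is filled at both ends and the result is $S^2\times S^1$, not $S^3$.

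There are two secondary errors as well. First, the supporting summed open book on $(T^3,\xi_n)$ has $2n$ interface tori and $2n$ irreducible subdomains, not~$n$: the sign of the fibration coordinate must flip at each zero of $\cos(2\pi n\eta)$, of which there are~$2n$. Second, the tori $\{\eta = \text{const}\}$ are not nullhomologous in~$T^3$---they represent a generator of $H_2(T^3)$---so your justification of \eqref{eqn:homological} is incorrect, though the condition does hold once you take $\omega$ exact. The paper's argument avoids the closed-page problem by attaching $\DD$-decoupling handles along only \emph{every second} interface torus ($n$ of the $2n$), so that each resulting component still contains one interface torus and carries the symmetric summed open book with disk pages on either side---this is the tight $S^1\times S^2$. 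A subsequent $\DD$-capping cobordism, applied to the ordinary open book on $S^1\times S^2$ with cylindrical pages and two binding circles, then reaches $(S^3,\xi_0)$.
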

\begin{proof}
The torus $(T^3,\xi_n)$ admits a supporting summed open book 
decomposition with $2n$ irreducible subdomains $M_j$
each having cylindrical
pages and trivial monodromy, attached to each other along~$2n$ interface
tori $\iI = \bigcup_j T_j$ such that
$T_j = M_j \cap M_{j+1}$ for $j = \ZZ_{2n}$.  Attaching round handles
$\DD \times\AA$ along every second interface torus 
$T_0,T_2,\ldots,T_{2n-2}$ yields a weak symplectic cobordism to the 
disjoint union of~$n$ copies of the tight $S^1 \times S^2$
(Figure~\ref{fig:T3}).  The latter
is also supported by an open book with cylindrical pages and trivial monodromy,
so we can attach a $2$-handle $\DD \times \DD$ along one binding
component to create a weak cobordism to the tight~$S^3$.  The resulting
weak cobordism from $T^3$ to $S^3 \sqcup \ldots \sqcup S^3$ can be
deformed to a strong cobordism since the symplectic form is necessarily
exact near $S^3 \sqcup \ldots \sqcup S^3$.
\end{proof}
\begin{remark}
\label{remark:needOmegaSeparating}
Note that $(T^3,\xi_n)$ is always \emph{weakly} fillable
\cite{Giroux:plusOuMoins}, and
indeed, the above cobordism cannot be attached to any weak filling
$(W,\omega)$ of $(T^3,\xi_n)$ for which $\int_{T_j} \omega \ne 0$.
This shows that the homological condition in Theorem~\ref{thm:roundHandles}
cannot be removed.
\end{remark}

\begin{figure}
\begin{center}
\includegraphics{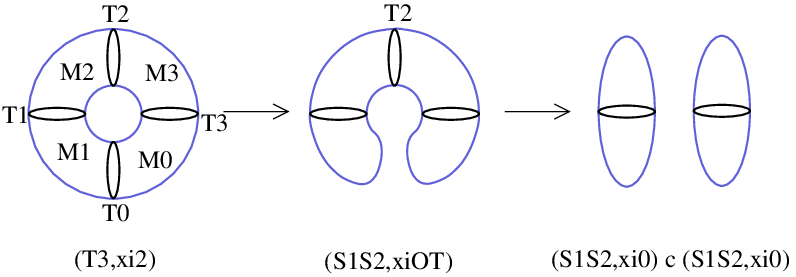}
\caption{\label{fig:T3}
The torus $(T^3,\xi_2)$ can be constructed out of four irreducible
subdomains containing cylindrical pages with trivial monodromy.
Attaching one $\DD$-decoupling cobordism yields an overtwisted
$S^1 \times S^2$, and one can then attach a second one to obtain a
disjoint union of two copies of the tight $S^1 \times S^2$.}
\end{center}
\end{figure}

\subsection{Gay's cobordisms for Giroux torsion}
\label{sec:Gay}

Recall that a contact manifold $(M,\xi)$ is said to have
\defin{Giroux torsion} $\GT(M,\xi) = n \in \NN$ 
if $n$ is the largest integer for which $(M,\xi)$
admits a contact embedding of
$([0,1] \times T^2, \xi_n)$, where $\xi_n$ is given by \eqref{eqn:xin};
we write $\GT(M,\xi) = 0$ if there are no such embeddings and
$\GT(M,\xi) = \infty$ if they exist for arbitrarily large~$n$.
Every contact manifold with positive Giroux torsion also has
planar $1$-torsion (see \cite{Wendl:openbook2}), thus
as a special case of Theorem~\ref{thm:overtwisted}, every
$(M,\xi)$ with $\GT(M,\xi) \ge 1$ is symplectically cobordant to
something overtwisted; this was proved by David Gay in \cite{Gay:GirouxTorsion}
for $\GT(M,\xi) \ge 2$.  A concrete picture of this 
cobordism\footnote{Both the cobordism in Figure~\ref{fig:Giroux} and the
one that is constructed explicitly in \cite{Gay:GirouxTorsion} for the
case $\GT(M,\xi) \ge 2$ are \emph{weak} cobordisms, not strong in general.
As David Gay has pointed out to me, these can always be turned into
strong cobordisms by attaching additional $2$-handles to make the 
positive boundary an overtwisted rational homology sphere (see the proof of
Theorem~\ref{thm:overtwisted} in \S\ref{sec:proofs}).} 
is shown
in Figure~\ref{fig:Giroux}: namely, if $\GT(M,\xi) \ge 1$, then~$M$ contains a
domain $[0,1] \times T^2 \cong M_0 \subset M$ on which~$\xi$ is supported 
by a blown up summed open book with three irreducible subdomains
$$
M_0 = M_- \cup_{T_-} Z \cup_{T_+} M_+
$$
attached to each other in a chain along two
interface tori $T_\pm = Z \cap M_\pm$.  As is explained in
\cite{Wendl:openbook2}, $M_0$ is literally the closure of some open
neighborhood of the standard Giroux torsion domain 
$([0,1] \times T^2,\xi_1)$ in~$M$, and the middle segment $Z$ can be identified
with $[1/4,3/4] \times T^2$ in $([0,1] \times T^2,\xi_1)$: in particular
it has cylindrical pages with trivial monodromy.  Likewise
$M_+$ and $M_-$ have cylindrical pages but nontrivial monodromy in
general---this detail will play no role in the following.
Attaching a round handle
$\DD \times\AA$ along~$T_-$ produces a weak symplectic cobordism to
a new contact manifold~$M'$, containing the disconnected domain
$M'_0$ shown in Figure~\ref{fig:Giroux}: in particular $M_-$ and $Z$ are each
transformed into subdomains $M_-'$ and $Z'$ with disk-like pages.
Now $Z' \cup M_+ \subset M'$ contains an overtwisted disk; indeed, it is
a planar $0$-torsion domain.  Observe that this construction can also
be used to show that $(M,\xi)$ is not \emph{weakly} fillable if
the torsion domain separates~$M$, as then $\int_{T_-} \omega = 0$ for
any symplectic form~$\omega$ arising from a weak filling.

\begin{figure}
\begin{center}
\includegraphics{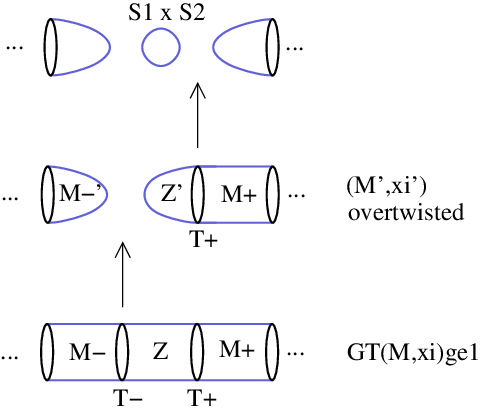}
\caption{\label{fig:Giroux}
Attaching a $\DD$-decoupling cobordism to $(M,\xi)$ with Giroux torsion
at least~$1$ yields an overtwisted contact manifold $(M',\xi')$.
Attaching one more yields the disjoint union of a contact manifold with
a trivial symplectic $S^2$-fibration over~$S^1$.}
\end{center}
\end{figure}

Gay's proof in \cite{Gay:GirouxTorsion} that Giroux torsion obstructs 
strong filling did not directly use the above cobordism, but proved
instead that $(M,\xi)$ with $\GT(M,\xi) \ge 1$ admits a symplectic cobordism
to some non-empty contact manifold such that the cobordism itself contains
a symplectic sphere with vanishing self-intersection---Gay's argument
then used gauge theory to derive a contradiction if $(M,\xi)$ has a
filling, but one can just as well use Lemma~\ref{lemma:spheres} above.
A close relative of Gay's cobordism construction is easily obtained
from the above picture: attaching round handles $\DD\times\AA$
along both $T_-$ and $T_+$, the top of the cobordism contains a connected
component with closed sphere-like pages (the top picture in
Figure~\ref{fig:Giroux}), which can be capped by
$\DD \times S^2$ to produce a cobordism that contains symplectic
spheres of self-intersection number~$0$.

\subsection{Some new examples with $M_- \cob M_+$ but $M_- \necob M_+$}
\label{sec:nonexact}

Gromov's theorem \cite{Gromov} on the non-existence of exact Lagrangians
in $\RR^{2n}$ provides perhaps the original example of a pair of
contact manifolds that are strongly but not exactly cobordant: indeed,
viewing $(T^3,\xi_1)$ as the boundary of a Weinstein neighborhood of
any Lagrangian torus in the
standard strong filling of the tight $3$-sphere $(S^3,\xi_0)$, we obtain
$$
(T^3,\xi_1) \cob (S^3,\xi_0)
\quad\text{ but }\quad
(T^3,\xi_1) \necob (S^3,\xi_0).
$$
The nonexistence of the exact cobordism here can also be proved by the
argument of Hofer \cite{Hofer:weinstein} mentioned in the introduction,
which implies that if $(M,\xi)$ admits a Reeb vector field with no
contractible periodic orbit, then $(M,\xi) \necob (M',\xi')$ whenever
either $(M',\xi')$ is overtwisted or $M' \cong S^3$.
Together with Theorem~\ref{thm:overtwisted}, this implies that for any
$(M_\OT,\xi_\OT)$ overtwisted and $n \ge 2$,
$$
(T^3,\xi_n) \cob (M_\OT,\xi_\OT)
\quad\text{ but }\quad
(T^3,\xi_n) \necob (M_\OT,\xi_\OT).
$$
A subtler obstruction to exact cobordisms is defined in joint work of the
author with Janko Latschev \cite{LatschevWendl} via
Symplectic Field Theory, leading to the
following example.  For any integer $k \ge 1$, suppose $\Sigma$ is a closed,
connected and oriented surface of genus $g \ge k$, and
$\Gamma \subset \Sigma$ is a multicurve consisting of $k$ disjoint embedded
loops which divide~$\Sigma$ into exactly two connected components
$$
\Sigma = \Sigma_+ \cup_\Gamma \Sigma_-,
$$
such that $\Sigma_+$ has genus~$0$ and $\Sigma_-$ has genus $g - k + 1 > 0$.
By a construction due to Lutz \cite{Lutz:77}, the product 
$$
M_{k,g} := S^1 \times \Sigma
$$
then admits a unique (up to isotopy) $S^1$-invariant contact structure
$\xi_{k,g}$ such that the convex surfaces $\{*\} \times \Sigma$ have
dividing set~$\Gamma$.  The contact manifold
$(M_{k,g},\xi_{k,g})$ then has planar $(k-1)$-torsion, as the two subsets
$S^1 \times \Sigma_\pm$ can be regarded as the irreducible subdomains of a
supporting summed open book with pages $\{*\} \times \Sigma_\pm$,
so we view $S^1 \times \Sigma_+$ as the planar piece and
$S^1 \times \Sigma_-$ as the padding (see Definition~\ref{defn:planarTorsion}).
In particular, $(M_{k,g},\xi_{k,g})$ is overtwisted if and only if
$k=1$, and for $k \ge 2$ it has a Reeb vector field with no contractible
periodic orbits.  It turns out in fact that each increment of~$k$ contains
an obstruction to exact fillings that is invisible in the non-exact case.

\begin{thm}
\label{thm:Janko}
If $k > \ell \ge 1$ then for any $g \ge k$ and $g' \ge \ell$,
$$
(M_{k,g},\xi_{k,g}) \cob (M_{\ell,g'},\xi_{\ell,g'})
\quad\text{ but }\quad
(M_{k,g},\xi_{k,g}) \necob (M_{\ell,g'},\xi_{\ell,g'})
$$
\end{thm}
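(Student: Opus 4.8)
The plan is to prove the positive statement $(M_{k,g},\xi_{k,g}) \cob (M_{\ell,g'},\xi_{\ell,g'})$ by an explicit handle attachment using Theorem~\ref{thm:roundHandles}, and to prove the negative statement $(M_{k,g},\xi_{k,g}) \necob (M_{\ell,g'},\xi_{\ell,g'})$ by invoking the SFT obstruction from \cite{LatschevWendl}.

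For the cobordism, recall that $(M_{k,g},\xi_{k,g})$ carries a supported summed open book with two irreducible subdomains, $S^1 \times \Sigma_+$ (the planar piece, genus~$0$ with $k$ boundary components) and $S^1 \times \Sigma_-$ (the padding, genus $g-k+1$), glued along the $k$ interface tori $\iI = S^1 \times \Gamma$. First I would choose $j := k - \ell$ of these interface tori, say $T_1, \dots, T_j \subset \iI$, and attach a $\DD$-decoupling round handle along $\iI_0 = T_1 \cup \dots \cup T_j$. Since $S^1 \times \Sigma$ is a product and $[\Gamma]$-components are homologous in $\Sigma$ with appropriate orientations, the homological condition \eqref{eqn:homological} $\sum \int_{T_i}\omega = 0$ can be arranged (indeed $\omega$ may be chosen with $\int_{T_i}\omega = 0$ for each $i$, using that the $T_i$ are each separating or by a suitable choice of orientations); this is where I would need to be slightly careful, but it is purely homological. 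The effect of the decoupling surgery on the pages is to cap off the $j$ corresponding boundary circles of $\Sigma_\pm$ with disks: the planar piece $\Sigma_+$ (genus~$0$, $k$ boundary components) becomes a surface with $k - j = \ell$ boundary components and still genus~$0$, while the padding side acquires closed pages on $j-1$ of the resulting components — no, more carefully: decoupling along $j$ tori splits $S^1 \times \Sigma$ into pieces, the relevant one being a new contact manifold whose supported summed open book has $\ell$ interface tori, planar piece of genus~$0$ with $\ell$ boundary components, and padding of genus $g - k + 1 = (g-j) - \ell + 1$; choosing $g' := g - j$ (which satisfies $g' \ge \ell$ since $g \ge k$ gives $g - j \ge k - j = \ell$... one must check $g' \ge \ell$, which holds, and more generally allow further stabilizations to reach arbitrary $g' \ge \ell$) this is precisely $(M_{\ell,g'},\xi_{\ell,g'})$, up to the closed-page components $M'_{\text{flat}}$ which are trivial $S^2$-bundles over $S^1$ and get capped off by Lefschetz fibrations $\DD \times S^2 \to \DD$ as in Theorem~\ref{thm:roundHandles}. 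Adjusting $g'$ to any value $\ge \ell$ is handled by attaching extra trivial handles or stabilizing the open book before the surgery. Theorem~\ref{thm:roundHandles} then delivers the weak cobordism, and since we only need $\cob$ (not strong), or since the flat part is capped, we obtain a genuine strong cobordism after the routine deformation near the convex end (the symplectic form can be taken exact near a rational-homology-sphere-like piece, or one applies Theorem~\ref{thm:roundHandleCohomology}).

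For the non-existence of an exact cobordism, I would cite directly the obstruction of \cite{LatschevWendl}: the algebraic SFT filling obstruction assigns to each $(M_{k,g},\xi_{k,g})$ an element reflecting planar $(k-1)$-torsion, and the point is that an exact symplectic cobordism induces a map on (linearized, or full) contact homology / SFT that is compatible with these obstruction classes, so that $(M_{k,g},\xi_{k,g}) \ecob (M_{\ell,g'},\xi_{\ell,g'})$ with $k > \ell$ would force the order-$k$ obstruction in the domain to map to something already killed by the order-$\ell$ obstruction in the target — a contradiction, since the construction in \cite{LatschevWendl} shows these live in genuinely different "levels." Concretely, for $\ell = 1$ the target is overtwisted, and Hofer's argument (as recalled in the introduction) already gives $(M_{k,g},\xi_{k,g}) \necob$ overtwisted because $(M_{k,g},\xi_{k,g})$ admits a Reeb field with no contractible orbit; for general $\ell \ge 2$ one needs the finer graded obstruction of \cite{LatschevWendl}.

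The main obstacle I expect is not the cobordism construction — that is a direct application of Theorem~\ref{thm:roundHandles} once the combinatorics of which tori to cap is sorted out, together with bookkeeping to hit an arbitrary $g' \ge \ell$ — but rather making the non-exactness argument precise: one must quote the correct statement from \cite{LatschevWendl} about how the graded SFT obstruction behaves under exact cobordisms and verify that $(M_{k,g},\xi_{k,g})$ and $(M_{\ell,g'},\xi_{\ell,g'})$ indeed sit at incompatible levels. Since the paper treats this as a known input, in practice I would simply reduce to the statement proved there, checking only that our manifolds $M_{k,g}$ are among the examples to which it applies.
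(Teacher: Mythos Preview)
Your treatment of the nonexistence of the exact cobordism is correct and matches the paper: both simply cite \cite{LatschevWendl}.

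For the existence of the strong cobordism, your explicit decoupling argument is essentially what the paper sketches as an illustration, but it does \emph{not} prove the full statement, and the paper does not claim that it does. Attaching $\DD$-decoupling handles one interface torus at a time takes $(M_{k,g},\xi_{k,g})$ to $(M_{k-1,g-1},\xi_{k-1,g-1})$, and iterating $k-\ell$ times yields only the single value $g' = g - (k-\ell)$. Your remark that ``adjusting $g'$ to any value $\ge \ell$ is handled by attaching extra trivial handles or stabilizing the open book'' is not a proof: there is no obvious stabilization move on the summed open book that changes $g$ while fixing $k$ and producing a strong cobordism in the right direction. The paper instead dispatches the general case in one line via Corollary~\ref{cor:equivalence}: since $k > \ell \ge 1$ forces $k \ge 2$, the manifold $(M_{k,g},\xi_{k,g})$ has planar $(k-1)$-torsion with $k-1 \ge 1$, hence is symplectically cobordant to \emph{every} closed contact $3$-manifold, in particular to $(M_{\ell,g'},\xi_{\ell,g'})$ for arbitrary $g' \ge \ell$.

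A few further corrections to your explicit construction. First, since $\ell \ge 1$, capping off $k-\ell$ boundary circles on each side of the page still leaves $\ell \ge 1$ interface tori, so no component of $M'$ has closed pages: $M'_{\text{flat}} = \emptyset$ and no $S^2$-bundles or Lefschetz caps appear. Second, $\cob$ in this paper \emph{is} the strong cobordism relation, so your phrase ``we only need $\cob$ (not strong)'' is a confusion. To upgrade the weak cobordism from Theorem~\ref{thm:roundHandles} to a strong one, the paper invokes Theorem~\ref{thm:roundHandleCohomology} with the concrete observation you are missing: the boundary of the co-core $\roundcocore_\DD$ in $M_{k-1,g-1}$ consists of two loops of the form $S^1 \times \{*\}$ with opposite orientations, hence is nullhomologous, so condition~(ii) of that theorem holds.
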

\begin{proof}
The nonexistence of the exact cobordism is a result of \cite{LatschevWendl}.
The existence of the non-exact cobordism follows immediately from
Corollary~\ref{cor:equivalence}, but in certain cases one can construct
it much more explicitly as in Figure~\ref{fig:Mk}.  In particular,
$(M_{k,g},\xi_{k,g})$ is supported by a summed open book
consisting of the two irreducible subdomains $S^1 \times \Sigma_\pm$
with pages $\{*\} \times \Sigma_\pm$ attached along~$k$ interface tori.
Attaching $\DD \times \AA$ along one
of the interface tori gives a weak $\DD$-decoupling cobordism to
$(M_{k-1,g-1},\xi_{k-1,g-1})$.
Theorem~\ref{thm:roundHandleCohomology} then implies that this can be
deformed to a strong cobordism, as the restriction of the symplectic
form to $M_{k-1,g-1}$ is Poincar\'e dual to a multiple of the boundary of
the co-core; the latter consists of two loops of the form $S^1 \times \{*\}$
with opposite orientations and is thus nullhomologous.
\end{proof}

\begin{figure}
\begin{center}
\includegraphics{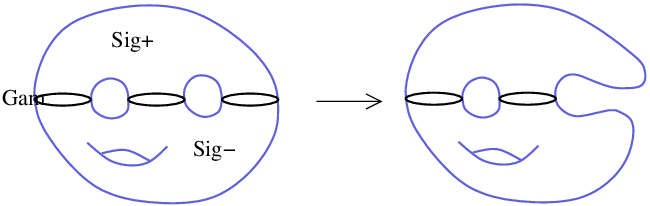}
\caption{\label{fig:Mk}
An explicit cobordism from $(M_{3,3},\xi_{3,3})$ to $(M_{2,2},\xi_{2,2})$
as in the proof of Theorem~\ref{thm:Janko}
can be realized as a $\DD$-decoupling cobordism.}
\end{center}
\end{figure}

\subsection{Open books with reducible monodromy}
\label{sec:reducible}

Any compact, connected and oriented surface $\Sigma$ with boundary,
together with a
diffeomorphism $\varphi : \Sigma \to \Sigma$ fixing the boundary,
determines a contact $3$-manifold $(M_{\varphi},
\xi_{\varphi})$, namely the one supported by the open book
decomposition with page~$\Sigma$ and monodromy~$\varphi$.  
Recall that the mapping class of the monodromy map~$\varphi$ is said to be 
\defin{reducible} if it has a representative that preserves some
multicurve $\Gamma \subset \Sigma$ such that no component of
$\Sigma\setminus\Gamma$ is a disk or an annulus.  Consider the simple
case in which $\varphi$ preserves each individual connected component 
$\gamma \subset \Gamma$ and also preserves its orientation (note that this
is always true for some iterate of~$\varphi$).  In this case we 
may assume after a suitable isotopy that $\varphi$ is the identity on a
neighborhood of $\p\Sigma \cup \Gamma$, so that for some open
annular neighborhood $\gamma \subset \nN(\gamma) \subset \Sigma$ of each
curve $\gamma \subset \Gamma$, $M_\varphi$ contains a thickened torus
region
$$
S^1 \times \nN(\gamma) \subset M_\varphi
$$
on which the open book decomposition is the projection to the first factor.
Let $\nN(\Gamma) \subset \Sigma$ denote the union of all the neighborhoods
$\nN(\gamma)$ and define the possibly disconnected compact surface
$$
\Sigma_\Gamma = \Sigma \setminus \nN(\Gamma)
$$
with boundary; then $\varphi$ restricts to this surface as an orientation
preserving diffeomorphism $\varphi_\Gamma : \Sigma_\Gamma \to \Sigma_\Gamma$
that preserves each connected component and equals the identity
near~$\p\Sigma_\Gamma$.  Denote the connected components
of $\Sigma_\Gamma$ by
$$
\Sigma_\Gamma = \Sigma_\Gamma^1 \sqcup \ldots \sqcup \Sigma_\Gamma^N
$$
and the corresponding restrictions of~$\varphi_\Gamma$ by
$$
\varphi_\Gamma^j : \Sigma_\Gamma^j \to \Sigma_\Gamma^j
$$
for $j = 1,\ldots,N$.  Since each $\Sigma_\Gamma^j$ necessarily has nonempty
boundary, each gives rise to a connected contact manifold
$(M_{\varphi_\Gamma^j},\xi_{\varphi_\Gamma^j})$.

\begin{thm}
\label{thm:reducible}
Given a reducible monodromy map $\varphi : \Sigma \to \Sigma$ as
described above, there exists a weak symplectic cobordism $(W,\omega)$ from
$$
(M_{\varphi_\Gamma^1},\xi_{\varphi_\Gamma^1}) \sqcup \ldots \sqcup
(M_{\varphi_\Gamma^N},\xi_{\varphi_\Gamma^N})
\quad\text{ to }\quad
(M_\varphi,\xi_\varphi),
$$
which is (strongly) concave at the negative boundary and such that the
restriction of~$\omega$
to the positive boundary is Poincar\'e dual to a positive multiple of
$$
\sum_{\gamma \subset \Gamma} [S^1 \times \{p_\gamma\}] \in H_1(M_\varphi;\RR),
$$
where the summation is over the connected components of~$\Gamma$ and
$p_\gamma \subset \nN(\gamma)$ denotes an arbitrarily chosen point.

Moreover, given any closed $2$-form $\Omega$ on
$M_{\varphi_\Gamma^1} \sqcup \ldots \sqcup M_{\varphi_\Gamma^N}$ that
dominates the respective contact structures, one can also construct a
weak cobordism between the contact manifolds above such that
$\omega$ matches $\Omega$ at the negative boundary.
\end{thm}
\begin{proof}
The cobordism is a stack of $\AA$-capping cobordisms, constructed
by attaching handles of the form
$[-1,1] \times S^1 \times \DD$ via Theorem~\ref{thm:2handles} along
all pairs of binding circles in $M_{\varphi_\Gamma^1} \sqcup \ldots
\sqcup M_{\varphi_\Gamma^N}$ that correspond to the same curve in~$\Gamma$.
The co-core of each of these handles is a disk with boundary of the
form $S^1 \times \{*\} \subset S^1 \times \nN(\gamma) \subset M_\varphi$,
thus the cohomology class of~$\omega$ at the positive boundary follows
immediately from Theorem~\ref{thm:2handleCohomology}.
\end{proof}
\begin{cor}
If the contact manifolds $(M_{\varphi_\Gamma^j},\xi_{\varphi_\Gamma^j})$
for $j=1,\ldots,N$ are all weakly fillable, then so is
$(M_\varphi,\xi_\varphi)$.
\end{cor}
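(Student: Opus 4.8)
The plan is to reduce the corollary to Theorem~\ref{thm:reducible}, exploiting the freedom to prescribe the symplectic form at the negative boundary. Suppose $(W_j,\omega_j)$ is a weak symplectic filling of $(M_{\varphi_\Gamma^j},\xi_{\varphi_\Gamma^j})$ for each $j = 1,\ldots,N$. Taking the disjoint union produces a weak filling
$$
(V,\omega_V) := \bigsqcup_{j=1}^N (W_j,\omega_j)
$$
of $\bigsqcup_{j=1}^N (M_{\varphi_\Gamma^j},\xi_{\varphi_\Gamma^j})$, and by the definition of a weak filling the closed $2$-form
$$
\Omega := \omega_V|_{T(M_{\varphi_\Gamma^1} \sqcup \ldots \sqcup M_{\varphi_\Gamma^N})}
$$
dominates the contact structures $\xi_{\varphi_\Gamma^j}$ on the respective components.

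Next I would feed this particular $\Omega$ into Theorem~\ref{thm:reducible}, which yields a weak symplectic cobordism $(W,\omega)$ from $\bigsqcup_{j=1}^N (M_{\varphi_\Gamma^j},\xi_{\varphi_\Gamma^j})$ to $(M_\varphi,\xi_\varphi)$ whose restriction to the negative boundary equals $\Omega$. Since the restrictions of $\omega_V$ and of $\omega$ to the common (contactomorphic) boundary $\bigsqcup_{j=1}^N M_{\varphi_\Gamma^j}$ now agree, Lemma~\ref{lemma:gluing} applies and we may glue $(V,\omega_V)$ to $(W,\omega)$ along that boundary, obtaining a weak symplectic cobordism
$$
(W',\omega') := (V,\omega_V) \cup (W,\omega)
$$
from $\emptyset$ to $(M_\varphi,\xi_\varphi)$, i.e.\ a weak filling of $(M_\varphi,\xi_\varphi)$. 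If one insists that a filling be connected, it is enough to pass to the connected component of $W'$ whose boundary is $M_\varphi$ (the latter being connected) and discard the remaining closed components; this is again a weak filling.

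I do not expect any genuine obstacle: the reason Theorem~\ref{thm:reducible} is stated with a prescribable dominating $2$-form at the negative boundary is precisely to make this gluing go through. The only points requiring care are (i) that the $2$-form inherited from the disjoint union of the $(W_j,\omega_j)$ really does dominate each $\xi_{\varphi_\Gamma^j}$, which is immediate from the definition of weak filling, and (ii) that the hypotheses of Lemma~\ref{lemma:gluing}---a Darboux-type collar normal form with matching boundary restrictions---are met, which holds by construction. The ``$\Omega \equiv 0$'' clause in Theorem~\ref{thm:reducible} plays no role here; it would be relevant only if one wished to upgrade the output to a strong filling, which is not claimed.
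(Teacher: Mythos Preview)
Your proof is correct and is exactly the argument the paper has in mind; the corollary is stated without proof precisely because it follows from Theorem~\ref{thm:reducible} together with the gluing Lemma~\ref{lemma:gluing} in just the way you describe. Your remark about passing to a connected component is harmless but in fact unnecessary: since $\Sigma$ is connected, the handle attachments in Theorem~\ref{thm:reducible} connect all the pieces, so the resulting filling is already connected.
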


\begin{remark}
\label{remark:JohnJeremy}
John Baldwin \cite{Baldwin:cap} has observed that topologically,
the cobordism of Theorem~\ref{thm:reducible} can also be obtained by
performing boundary connected sums on the pages and then using
$\DD$-capping cobordisms to remove extra boundary components; in
\cite{Baldwin:cap} this is used to deduce a relation between the
Ozsv\'ath-Szab\'o contact invariants of ($M_{\varphi},\xi_{\varphi}$)
and the pieces $(M_{\varphi^1_\Gamma},\xi_{\varphi^1_\Gamma}),\ldots,
(M_{\varphi^N_\Gamma},\xi_{\varphi^N_\Gamma})$.  Additionally,
Jeremy Van Horn-Morris and John Etnyre have pointed out to me that if one
also assumes every component of $\Sigma\setminus\Gamma$ to intersect
$\p \Sigma$, then one can replace the weak cobordism of
Theorem~\ref{thm:reducible} with a Stein cobordism.  This does not appear
to be possible if any component of $\Sigma\setminus\Gamma$ has its
full boundary in~$\Gamma$.
\end{remark}

\subsection{Etnyre's planarity obstruction}
\label{sec:Etnyre}

Let us say that a connected contact 
$3$-manifold $(M,\xi)$ is \defin{maximally cobordant to $S^3$} if there
exists a compact connected $4$-manifold $W$ with $\p W = S^3 \sqcup (-M)$
such that for every closed $2$-form $\Omega$ on~$M$ with $\Omega|_\xi > 0$,
there is a symplectic form~$\omega$ on~$W$ with $\omega|_{TM} = \Omega$
defining a weak symplectic cobordism from $(M,\xi)$ to $(S^3,\xi_0)$.
Theorem~\ref{thm:planar} says that
every planar contact manifold is maximally
cobordant to~$S^3$.  It turns out that this suffices to give an alternative
proof of the planarity obstruction in \cite{Etnyre:planar}*{Theorem~4.1}.

\begin{thm}
\label{thm:Etnyre}
Suppose $(M,\xi)$ is maximally cobordant to~$S^3$.  Then every 
connected weak
semi-filling of $(M,\xi)$ has connected boundary and negative-definite
intersection form.
\end{thm}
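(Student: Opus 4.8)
The plan is to glue a maximal cobordism to $S^3$ on top of the semi-filling, cap off the resulting tight $S^3$ by the standard Fubini--Study piece, and then read off both conclusions from the geometry of the $+1$-sphere that appears. So let $(W_0,\omega_0)$ be a weak semi-filling of $(M,\xi)$; thus $W_0$ is connected, $M$ is one component of $\partial W_0$, and $\omega_0$ dominates $\xi$ as well as the contact structures induced on the remaining components of $\partial W_0$. Put $\Omega := \omega_0|_{TM}$, a closed $2$-form on $M$ with $\Omega|_\xi > 0$. Since $(M,\xi)$ is maximally cobordant to $S^3$, there is a compact connected $4$-manifold $W$ with $\partial W = S^3 \sqcup (-M)$ and a symplectic form $\omega$ on $W$ with $\omega|_{TM} = \Omega$ making $(W,\omega)$ a weak cobordism from $(M,\xi)$ to $(S^3,\xi_0)$. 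The restrictions of $\omega_0$ and $\omega$ to $M$ agree (both equal $\Omega$) and the contact structures match, so Lemma~\ref{lemma:gluing} produces a connected weak symplectic manifold
$$
(\widehat W,\widehat\omega) := (W_0,\omega_0) \cup_M (W,\omega),
$$
with $\partial\widehat W = S^3 \sqcup (\partial W_0 \setminus M)$, and $\widehat\omega$ dominating $\xi_0$ on the $S^3$-component and the remaining contact structures elsewhere.

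Next I would convert this into a closed $+1$-sphere. Since $H^2_{\dR}(S^3) = 0$, the form $\widehat\omega|_{TS^3}$ is exact, so by the standard collar deformation (cf.~\cite{Eliashberg:cap}) we may assume $S^3$ is a \emph{convex} boundary component of $(\widehat W,\widehat\omega)$ inducing the contact structure $\xi_0$; now glue on the standard Fubini--Study cap $(\CC P^2 \setminus \mathring{B}^4, \omega_{\mathrm{FS}})$, whose concave boundary is exactly $(S^3,\xi_0)$. The result is a connected symplectic manifold $(Z,\omega_Z)$ with $\partial Z = \partial W_0 \setminus M$, still weakly dominating the same contact structures there, which contains a symplectically embedded sphere $\ell$ --- a projective line of the cap --- with $[\ell]\cdot[\ell] = +1$.

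Both assertions now follow. \textbf{Connectedness of the boundary.} If $\partial W_0 \setminus M \neq \emptyset$, blow $Z$ up at a point of $\ell$: the proper transform $\widetilde\ell$ is a symplectically embedded sphere of self-intersection $0$ inside the connected weak filling $Z \# \overline{\CC P^2}$ of the nonempty contact manifold $\partial W_0 \setminus M$, contradicting Lemma~\ref{lemma:spheres}. Hence $\partial W_0 = M$ is connected and $Z$ is closed. \textbf{Negative definiteness.} A closed symplectic $4$-manifold containing a symplectic sphere of square $+1$ is, by McDuff's structure theorem \cite{McDuff:rationalRuled}, a blow-up of $(\CC P^2,\omega_{\mathrm{FS}})$; in particular $b_2^+(Z) = 1$ and $Q_Z$ is nondegenerate of rank $1+k$ for some $k \ge 0$. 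Because $Z = \widehat W \cup_{S^3}(\CC P^2 \setminus \mathring{B}^4)$ is glued along a rational homology sphere, Mayer--Vietoris (equivalently, Novikov additivity) gives $Q_Z \cong Q_{\widehat W} \oplus \langle 1 \rangle$, and since $\partial\widehat W = S^3$ is itself a rational homology sphere the form $Q_{\widehat W}$ is nondegenerate; reading off $b_2^+$ forces $b_2^+(\widehat W) = 0$, so $Q_{\widehat W}$ is negative definite. Finally $\widehat W = W_0 \cup_M W$, and the intersection pairing of $W_0$ is computed by intersecting $2$-cycles pushed off $M$ into the interior, so $Q_{W_0}(a,b) = Q_{\widehat W}(j_*a, j_*b)$ for the inclusion-induced map $j_* : H_2(W_0;\RR) \to H_2(\widehat W;\RR)$. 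Negative definiteness of $Q_{\widehat W}$ then gives $Q_{W_0}(a,a) \le 0$ for all $a$, with equality only if $j_*a = 0$; by Mayer--Vietoris such $a$ lie in the image of $H_2(M;\RR)$, which is contained in the radical of $Q_{W_0}$. Thus $b_2^+(W_0) = 0$, i.e.~the intersection form of $W_0$ is negative definite.

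The main obstacle is the appeal to McDuff's rational/ruled classification of symplectic $4$-manifolds in order to conclude $b_2^+(Z) = 1$ from the existence of the $+1$-sphere; this is the one genuinely deep ingredient. Everything else is soft: the gluing of weak cobordisms (Lemma~\ref{lemma:gluing}), the weak-to-strong deformation near $S^3$ (which works precisely because $H^2_{\dR}(S^3)=0$), the symplectic blow-up that reduces the $+1$-sphere to a $0$-sphere, and the Mayer--Vietoris/Novikov-additivity bookkeeping. Some care is needed in that last step when $M$ is not a rational homology sphere, so that ``negative definite'' is read as $b_2^+(W_0)=0$ --- equivalently, negative definiteness of the nondegenerate form induced on $H_2(W_0;\RR)$ modulo its radical.
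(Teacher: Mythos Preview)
Your argument is correct, and the overall shape---glue the cobordism on top, then invoke McDuff's structure theory---matches the paper. The differences are worth noting, though.

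The paper stops at the strong filling $(\widehat W,\widehat\omega)$ of $(S^3,\xi_0)$ and invokes the Gromov--Eliashberg classification to conclude $\widehat W \cong B^4 \# \overline{\CC P^2} \# \ldots \# \overline{\CC P^2}$ directly, rather than capping by $\CC P^2\setminus \mathring B^4$ and citing McDuff's $+1$-sphere theorem; these are essentially equivalent packagings of the same hard input. For connectedness of $\partial W_0$, the paper simply quotes that $(S^3,\xi_0)$ is not weakly hyper-fillable, while your blow-up-to-a-$0$-sphere argument via Lemma~\ref{lemma:spheres} reproves that fact on the spot.

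The substantive difference is in the negative-definiteness step. The paper uses the \emph{maximal} part of the hypothesis in a way you do not: given a nonzero $A\in H_2(W_0;\QQ)$ with $\int_A\omega=0$, it perturbs $\omega$ to $\omega+\epsilon\sigma$ (still a weak filling) so that $\int_A(\omega+\epsilon\sigma)\ne 0$, then re-applies the maximal-cobordism hypothesis to this \emph{new} $\Omega$ and concludes that $\iota_*:H_2(W_0;\QQ)\to H_2(\widehat W;\QQ)$ is injective. Combined with the strict negative definiteness of $Q_{\widehat W}$, this yields $Q_{W_0}(A,A)<0$ for every nonzero $A$---i.e.\ the intersection form of $W_0$ is negative definite in the strict sense, with trivial radical. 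Your argument, by contrast, uses only a single cobordism for the one given $\Omega$ and establishes $b_2^+(W_0)=0$, i.e.\ negative definiteness modulo the radical, exactly as you flag in your final paragraph. So your proof is valid for the statement as usually read, but the paper extracts a slightly stronger conclusion from the ``maximal'' hypothesis, and it is precisely that injectivity trick which makes the maximality genuinely necessary.
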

\begin{proof}
Let~$W_1$ be the compact $4$-manifold with $\p W_1 = S^3 \sqcup (-M)$
guaranteed by the assumption, and suppose $(W_0,\omega)$ is a weak filling
of $(M,\xi) \sqcup (M',\xi')$, where $(M',\xi')$ is some other contact
manifold, possibly empty.  If $W = W_0 \cup_M W_1$ is defined by gluing
these two along~$M$, then by assumption $\omega$ can be extended over~$W_1$
so that $(W,\omega)$ becomes a weak filling of $(S^3,\xi_0) \sqcup (M',\xi')$,
implying that $M'$ must be empty since $(S^3,\xi_0)$ is not weakly
co-fillable.  Now~$\omega$ is exact near $\p W = S^3$, so without loss
of generality we may assume $(W,\omega)$ is a strong filling of $(S^3,\xi_0)$.

We claim that the map induced on homology $H_2(W_0;\QQ) \to H_2(W;\QQ)$ by
the inclusion $\iota : W_0 \hookrightarrow W$ is injective.  Indeed, if
$A \in H_2(W_0;\QQ)$ satisfies $\int_A \omega \ne 0$, then obviously
$\int_{\iota_*A} \omega$ is also nonzero and thus $\iota_*A \ne 0 \in
H_2(W ; \QQ)$.  If $\int_A \omega = 0$ but $A \in H_2(W_0;\QQ)$ is nontrivial,
we can pick any closed $2$-form $\sigma$ on~$W_0$ with 
$\int_A \sigma \ne 0$ and replace~$\omega$ by $\omega + \epsilon\sigma$
for any $\epsilon > 0$ sufficiently small so that $(W_0,\omega+\epsilon\sigma)$
remains a weak filling of $(M,\xi)$.  Then $\omega + \epsilon\sigma$ also
extends over~$W_1$, so that the above argument goes through
again to prove that $\iota_*A$ is nontrivial.

Finally, we use the fact that the strong fillings of $(S^3,\xi_0)$ have been
classified: by a result of Gromov \cite{Gromov}
and Eliashberg \cite{Eliashberg:diskFilling}, $W$ is necessarily diffeomorphic
to a symplectic blow-up of the $4$-ball, i.e.
$$
W \cong B^4 \# \overline{\CC P^2} \# \ldots \# \overline{\CC P^2}.
$$
Since the latter has a negative-definite intersection form and
$\iota_* : H_2(W_0;\QQ) \to H_2(W;\QQ)$ is injective, the result follows.
\end{proof}

Our proof of Theorem~\ref{thm:planar} combined with 
Conjecture~\ref{conj:c} would
also reprove the algebraic planarity obstruction established in
\cite{Wendl:openbook2}, which is the twisted ECH version
of a Heegaard Floer theoretic result by
due to Oszv\'ath, Stipsicz and Szab\'o 
\cite{OzsvathSzaboStipsicz:planar}.  Note that the condition of
being maximally cobordant to~$S^3$ does not require $(M,\xi)$ to be fillable.
It is also not clear whether there can exist non-planar contact manifolds
that also satisfy this condition; the author is unaware of any known 
invariants that would be able to detect this distinction.

\begin{question}
\label{question:planar}
Is there a non-planar contact $3$-manifold which is maximally
cobordant to~$S^3$?
\end{question}

Note that if the assumption of Theorem~\ref{thm:Etnyre} is relaxed to
$(M,\xi) \cob (S^3,\xi_0)$, then the result becomes false: a counterexample
is furnished by the standard $3$-torus $(T^3,\xi_1)$, which admits a
cobordism to $(S^3,\xi_0)$ by Theorem~\ref{thm:toS3} but also is strongly
filled by
$T^*T^2$, whose intersection form is indefinite.  Assuming a strong filling
$(W_0,\omega)$ of $(M,\xi)$, the proof above
fails precisely at the point where the inclusion $W_0 
\hookrightarrow W$ is required to induce an \emph{injective} map
$H_2(W_0; \QQ) \to H_2(W;\QQ)$.  However, it still follows by the same
argument that $H_2(W_0;\QQ)$ cannot contain any class with strictly
positive square, hence we obtain the following weaker result with more
general assumptions---it applies in particular to all the contact manifolds
covered by Theorem~\ref{thm:toS3}.

\begin{thm}
\label{thm:nonpositive}
Suppose $(M,\xi)$ is a closed connected contact $3$-manifold with
$(M,\xi) \cob (S^3,\xi_0)$.
Then every strong semi-filling $(W,\omega)$ of $(M,\xi)$ has connected 
boundary and $b_2^+(W) = 0$.
\end{thm}

\subsection{Some remarks on planar torsion}
\label{sec:afterword}

The filling obstruction known as planar torsion was introduced in
\cite{Wendl:openbook2} with mainly holomorphic curves as motivation,
as it provides the most general setting known so far in which the existence
and uniqueness of certain embedded holomorphic curves leads to a vanishing
result for the ECH contact invariant.  In light of our cobordism
construction, however, one can now provide an alternative motivation for
the definition in purely symplectic topological terms.
The first step is to
understand what kinds of blown up summed open books automatically support
overtwisted contact structures: using Eliashberg's classification theorem
\cite{Eliashberg:overtwisted} and Giroux's criterion (cf.~\cite{Geiges:book}),
this naturally leads to the notion of planar $0$-torsion.  Then a more
general blown up summed open book defines a planar $k$-torsion domain
for some $k \ge 1$ if and only if it can be transformed into a planar
$0$-torsion domain by a sequence of $\DD$-capping and $\DD$-decoupling
surgeries; this is the essence of Proposition~\ref{prop:kTok-1} proved below.  
From this perspective, the definition of planar torsion and
the crucial role played by blown up summed open books seem completely
natural.  

More generally, the partially planar domains are precisely the
blown up summed open books for which a sequence of $\DD$-capping and
$\DD$-decoupling cobordisms can be used to construct a symplectic cap
that contains a symplectic sphere with square~$0$.  
As far as the author is aware,
almost all existing uniqueness or classification results for symplectic
fillings 
(e.g.~\cites{Wendl:fillable,Lisca:fillingsLens,OhtaOno:simpleSingularities})
apply to contact manifolds that admit caps of this type.  However, it does
not always suffice to construct an appropriate cap and then apply
McDuff's results \cite{McDuff:rationalRuled}: e.g.~the classification of
strong fillings of planar contact manifolds in terms of Lefschetz
fibrations \cites{Wendl:fillable,LisiVanhornWendl} truly relies on
\emph{punctured} holomorphic curves, as there is no obvious way to
produce a Lefschetz fibration with bounded fibers out of a family of
holomorphic spheres in a cap.

Finally, we remark that while Theorems~\ref{thm:overtwisted} and~\ref{thm:weak} 
substantially simplify the proof that planar torsion is a filling obstruction,
they do not reproduce all of the results in \cite{Wendl:openbook2}:
in particular the technology of Embedded Contact Homology is not yet far
enough along to deduce the vanishing of the contact invariant from a
non-exact cobordism.  Moreover, a proof using capping and decoupling
cobordisms simplifies the technology needed but does not remove it, as
a simplified version of the very same technology is required to 
prove the Gromov-Eliashberg
theorem (cf.~\S\ref{sec:GromovEliashberg}).  From the author's own
perspective, the idea for constructing symplectic cobordisms out of these
types of handles would never have emerged without a holomorphic curve
picture in the background (cf.~Figure~\ref{fig:Jhol}), 
and as we will discuss in \S\ref{sec:holomorphic},
after one has constructed the symplectic structure, it is practically no
extra effort to add a foliation by embedded $J$-holomorphic curves which
reproduces the $J$-holomorphic blown up open books of \cite{Wendl:openbook2}
on both boundary components.  The moral is that whether one prefers to
prove non-fillability results by direct holomorphic curve arguments or by
constructing cobordisms to reduce them to previously known results, it is
essentially the same thing: neither proof would be possible without the other.

\section{The details}
\label{sec:details}

The plan for proving the main results is as follows.  
We begin in \S\ref{sec:BUSOBD}
by reviewing the fundamental definitions involving blown up summed open
books and planar torsion, culminating with the (more or less obvious)
observation that one can
always use capping or decoupling surgery to decrease the order of a planar 
torsion domain.  In \S\ref{sec:model}, we introduce a useful concrete model
for a blown up summed open book and its supported contact structure.
This is applied in \S\ref{sec:modelCobordism} to write down a model of a weak
$\Sigma$-decoupling cobordism, and minor modifications explained in
\S\ref{sec:notRound} yield a similar model for the $\Sigma$-capping
cobordism.  This completes the cobordism construction for the case
where the negative boundary is strongly concave (or more generally when the
given symplectic form $\omega$ in Theorem~\ref{thm:2handles}
or~\ref{thm:roundHandles} is exact), see Remark~\ref{remark:concave}.
For the general case, we need
to show additionally that any given symplectic form
on $[0,1] \times M$ satisfying the necessary cohomological condition 
can be deformed so as to attach smoothly to the
model cobordisms we've constructed; 
this  is shown in \S\ref{sec:deformation}, thus completing the
proofs of Theorems~\ref{thm:2handles} and~\ref{thm:roundHandles}.  We
prove Theorems~\ref{thm:2handleCohomology} 
and~\ref{thm:roundHandleCohomology} in \S\ref{sec:cohomology},
answering the essentially cohomological question
of when the weak cobordism can be made strong, and when its symplectic
form is exact.  With these ingredients all in place, the proofs 
of the main results from \S\ref{sec:results} 
are completed in~\S\ref{sec:proofs}.  Finally,
\S\ref{sec:holomorphic} gives a brief discussion of the existence
and uniqueness of holomorphic curves in the cobordisms we've constructed.

\subsection{Review of summed open books and planar torsion}
\label{sec:BUSOBD}

The following notions were introduced in \cite{Wendl:openbook2}, and we
refer to that paper for more precise definitions and further discussion.

Assume $M$ is a compact oriented $3$-manifold, possibly with boundary, the
latter consisting of a union of $2$-tori.
A \defin{blown up summed open book} $\boldsymbol{\pi}$ on~$M$ can be 
described via the following data.
\begin{enumerate}
\item An oriented link $B \subset M \setminus \p M$, called the \defin{binding}.
\item A disjoint union of $2$-tori $\iI \subset M \setminus \p M$, called the
\defin{interface}.
\item For each interface torus $T \subset \iI$ a distinguished basis 
$(\lambda,\mu)$ of $H_1(T)$, where $\mu$ is defined only up to sign.
\item For each boundary torus $T \subset \p M$ a distinguished basis 
$(\lambda,\mu)$ of $H_1(T)$.
\item A fibration
$$
\pi : M \setminus (B \cup \iI) \to S^1
$$
whose restriction to $\p M$ is a submersion.
\end{enumerate}
The distinguished homology classes $\lambda, \mu \in H_1(T)$ associated
to each torus $T \subset \iI \cup \p M$ are called \defin{longitudes}
and \defin{meridians} respectively, and the oriented connected components
of the fibers $\pi^{-1}(\text{const})$ are called \defin{pages}.
We assume moreover that the fibration~$\pi$ can be expressed in the following
normal forms near the components of $B \cup \iI \cup \p M$.
As in an ordinary open book decomposition, each binding circle 
$\gamma \subset B$ has a neighborhood
admitting coordinates $(\theta,\rho,\phi) \in S^1 \times \DD$, where
$(\rho,\phi)$ are polar coordinates on the disk (normalized so that
$\phi \in S^1 = \RR / \ZZ$), such that $\gamma = \{ \rho = 0 \}$ and
\begin{equation}
\label{eqn:normalBinding}
\pi(\theta,\rho,\phi) = \phi.
\end{equation}
Near an interface torus $T \subset \iI$, we can find a
neighborhood with coordinates $(\theta,\rho,\phi) \in S^1 \times
[-1,1] \times S^1$ such that $T = \{\rho = 0 \} = S^1 \times \{0\} \times S^1$
with $(\lambda,\mu)$ matching the natural basis of 
$H_1(S^1 \times \{0\} \times S^1)$,  and
\begin{equation}
\label{eqn:normalInterface}
\pi(\theta,\rho,\phi) = \begin{cases}
\phi & \text{ for $\rho > 0$,}\\
-\phi & \text{ for $\rho < 0$.}
\end{cases}
\end{equation}
A neighborhood of a boundary torus $T \subset \p M$ similarly admits
coordinates $(\theta,\rho,\phi) \in S^1 \times [0,1] \times S^1$ with
$T = S^1 \times \{0\} \times S^1$ and
\begin{equation}
\label{eqn:normalBoundary}
\pi(\theta,\rho,\phi) = \phi.
\end{equation}
Observe that unlike the normal form \eqref{eqn:normalBinding}, the
map \eqref{eqn:normalBoundary} is well defined at $\rho=0$, since
there are no polar coordinates and hence no coordinate singularity.
The above conditions imply that the closure of each page is a smoothly
\emph{immersed} surface, whose boundary components are each embedded submanifolds of
$B$, $\iI$ or $\p M$, and in the last two cases homologous to the 
distinguished longitude~$\lambda$.  The ``generic'' page has an embedded
closure, but in isolated cases there may be pairs of boundary components
that are identical as oriented $1$-dimensional submanifolds in~$\iI$.

In general, any or all of $B$, $\iI$ and $\p M$ may be empty, and~$M$
may also be disconnected.
If $B \cup \iI \cup \p M = \emptyset$ we have simply a fibration 
$\pi : M \to S^1$ whose
fibers are closed oriented surfaces.  If $\iI \cup \p M = \emptyset$
but $B \ne \emptyset$ and $M$ is connected, we have an ordinary open book.  

We say that $\boldsymbol{\pi}$ is
\defin{irreducible} if the fibers $\pi^{-1}(\text{const})$ are connected,
i.e.~there is only one $S^1$-parametrized family of pages.  More generally,
any blown up summed open book can be presented uniquely as a union of
\defin{irreducible subdomains}
$$
M = M_1 \cup \ldots \cup M_N,
$$
which each inherit irreducible blown up summed open books and are attached
together along boundary tori (which become interface tori in~$M$).

The notion of a contact structure supported by an open book generalizes
in a natural way: we say that a contact structure~$\xi$ on~$M$ is 
\defin{supported} by $\boldsymbol{\pi}$ if it is the kernel of a \defin{Giroux form},
a contact form whose Reeb vector field is everywhere positively transverse
to the pages and positively tangent to their boundaries, and which
induces a characteristic foliation on $\iI \cup \p M$ with closed leaves 
parallel to the distinguished meridians.  A Giroux form exists and is
unique up to homotopy through Giroux forms on any
connected manifold with a blown up summed open book, except in the case
where the pages are
closed, i.e.~$B \cup \iI \cup \p M = \emptyset$.  The binding
is then a positively transverse link, and the interface and boundary are 
disjoint unions of pre-Lagrangian tori.

\begin{defn}
\label{defn:planar}
An irreducible blown up summed open book is called \defin{planar} if its
pages have genus zero.  An arbitrary blown up summed open book is then
called \defin{partially planar} if its interior contains a 
planar irreducible subdomain, which we call a \defin{planar piece}.
A \defin{partially planar domain} is a contact $3$-manifold $(M,\xi)$,
possibly with boundary, together with a supporting blown up summed open
book that is partially planar.  For a given closed $2$-form~$\Omega$
on~$M$, and a partially planar domain $(M,\xi)$ with planar piece
$M^P \subset M$, we say that $(M,\xi)$ is \defin{$\Omega$-separating} 
if $\int_T \Omega = 0$ for all interface tori~$T$ of~$M$ that lie in $M^P$,
and \defin{fully separating} if this is true for all~$\Omega$.
\end{defn}

\begin{defn}
\label{defn:symmetric}
A blown up summed open book is called \defin{symmetric} if it has empty
boundary, all its pages
are diffeomorphic and it contains exactly two irreducible subdomains
$$
M = M_+ \cup M_-,
$$
each of which has empty binding and interface.
\end{defn}

The simplest example of a symmetric summed open book is the one whose pages
are disks: this supports the tight contact structure on $S^1 \times S^2$
(cf.~Figure~\ref{fig:T3}, right).

\begin{defn}
\label{defn:planarTorsion}
For any integer $k \ge 0$, an $\Omega$-separating 
partially planar domain $(M,\xi)$ with planar piece $M^P \subset M$ 
is called an \defin{$\Omega$-separating planar $k$-torsion domain} 
if it satisfies the following conditions:
\begin{itemize}
\setlength{\itemsep}{0cm}
\item $(M,\xi)$ is not symmetric.
\item $\p M^P \ne \emptyset$.
\item The pages in $M^P$ have $k+1$ boundary components.
\end{itemize}
The (necessarily nonempty) subdomain $\overline{M \setminus M^P}$ 
is then called the \defin{padding}.

We say that a contact manifold $(M,\xi)$ with closed $2$-form $\Omega$
has \defin{$\Omega$-separating planar $k$-torsion}
if it contains an $\Omega$-separating planar $k$-torsion domain.
If this is true for all closed $2$-forms $\Omega$ on~$M$, then we say
$(M,\xi)$ has \defin{fully separating} planar $k$-torsion.
\end{defn}

It was shown in \cite{Wendl:openbook2} that a contact manifold is overtwisted
if and only if it has planar $0$-torsion, which is always fully separating
since the interface then intersects the planar piece only at its boundary,
a single nullhomologous torus.
The proofs of Theorems~\ref{thm:overtwisted} and~\ref{thm:weak} thus rest
on the following easy consequence of the preceeding definitions.

\begin{prop}
\label{prop:kTok-1}
If $M$ is a planar $k$-torsion domain for some $k \ge 1$, then it
contains a binding circle~$\gamma$ or interface torus~$T$ in its planar
piece such that the
following is true.  Let $M'$ denote the manifold with corresponding
blown up summed open book obtained from~$M$ by $\DD$-capping surgery
along~$\gamma$ or $\DD$-decoupling surgery along~$T$ respetively.
Then some connected component of~$M'$ is a planar $\ell$-torsion domain
for some $\ell \in \{k-2,k-1\}$.
\end{prop}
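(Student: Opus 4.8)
The plan is to analyze the effect of a single $\DD$-capping or $\DD$-decoupling surgery performed inside the planar piece $M^P$, and to check directly that the three defining conditions of Definition~\ref{defn:planarTorsion} survive for some component of $M'$, possibly with the number of boundary components of the planar page dropping by one or two. Recall that by hypothesis $M$ is a planar $k$-torsion domain with $k \ge 1$, so $M^P$ is an irreducible planar subdomain with $\p M^P \ne \emptyset$, whose pages have exactly $k+1 \ge 2$ boundary components, and the padding $\overline{M \setminus M^P}$ is nonempty. The page of $M^P$, being a genus-zero surface with $k+1$ holes, has at least two boundary circles; each such boundary circle is glued either to a binding circle $\gamma \subset B$, to an interface torus $T \subset \iI$, or to a boundary torus of $M$. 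Since $\p M^P \ne \emptyset$, at least one of these boundary circles is glued to an interface torus or boundary torus separating $M^P$ from the (nonempty) padding; and since the page has $k+1 \ge 2$ holes there is at least one more boundary circle, which is glued to a binding circle, an interface torus, or a boundary torus.

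First I would choose $\gamma$ or $T$ to be a binding circle or interface torus of $M$ that appears as one of these page-boundary circles of $M^P$, choosing it so that it is \emph{not} the unique interface torus connecting $M^P$ to the padding if there is a genuine choice (this is possible precisely because the page has $\ge 2$ boundary components, though one must treat separately the degenerate cases where some boundary circles of the page are identified in pairs inside $\iI$). Performing $\DD$-capping surgery along such a $\gamma$, or $\DD$-decoupling surgery along such a $T$, produces $M'$ whose induced blown up summed open book has pages obtained from those of $M$ by capping off one boundary component of the planar page (in the capping case, or the $\Sigma_+$-side of a decoupling) with a disk: genus is unchanged, so the relevant component of $M'$ is still partially planar with a planar piece $(M^P)'$ whose pages have genus zero and now $k$ (if we filled one hole) boundary components, or $k-1$ if the decoupling filled two holes of the page (which happens when both $\Sigma_\pm$-sides of $T$ lie in $M^P$). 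One then sets $\ell = k-1$ or $\ell = k-2$ accordingly, noting that $\ell \ge 0$ since we never touch the interface torus separating $M^P$ from the padding unless $k=1$, in which case a single decoupling can leave $\ell = 0$ or, in the identified-boundary case, we must be more careful. The padding is untouched, hence the component of $M'$ containing it is still a nontrivial subdomain, so the resulting structure is not symmetric.

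The main obstacle I expect is the verification that the chosen component of $M'$ is \emph{not symmetric} and still has $\p (M^P)' \ne \emptyset$ in the boundary cases $k=1$, together with the bookkeeping of the degenerate configurations in which two boundary components of the page of $M^P$ coincide as oriented submanifolds of $\iI$ (the ``isolated cases'' mentioned after \eqref{eqn:normalBoundary}). In those degenerate cases a single decoupling surgery can cap off both of the coincident boundary circles at once, which is exactly the source of the $\ell = k-2$ alternative; one has to check that $(M^P)'$ still has at least one boundary component left over to connect it to the padding, equivalently that the page of $M^P$ had at least three boundary circles, or else identify the resulting overtwisted ($\ell=0$) piece directly. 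I would handle this by a short case analysis on how the $k+1 \ge 2$ page-boundary circles of $M^P$ are distributed among $B$, $\iI$ and $\p M$, and among coincident pairs, always keeping the padding as a witness that symmetry fails. Everything else—genus preservation, the normal forms near $B \cup \iI$ transforming correctly under the surgery, the $\Omega$-separating condition being irrelevant to the statement—is immediate from the definitions recalled in \S\ref{sec:BUSOBD} and the description of $\Sigma$-capping and $\Sigma$-decoupling surgery in \S\ref{sec:handles}.
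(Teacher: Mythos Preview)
Your overall strategy matches the paper's, but there is a genuine gap in the non-symmetry step, and your proposed fix (``the padding is untouched, hence \ldots\ not symmetric'') does not work.

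Here is a concrete counterexample to your argument. Take $M$ with $\p M = \emptyset$, planar piece $M^P$ whose pages are $3$-holed spheres with one boundary circle at a binding component~$\gamma$ and the other two at interface tori $T_1, T_2 = \p M^P$, and padding a single irreducible subdomain with annulus pages attached along $T_1 \cup T_2$. This is a planar $2$-torsion domain. Following your rule (avoid interface tori connecting $M^P$ to the padding), you would cap~$\gamma$. But then both irreducible subdomains of~$M'$ have annulus pages, empty binding and empty interior interface, so~$M'$ is symmetric and is not a planar torsion domain at all. The padding was indeed untouched, yet symmetry still occurs because the surgery made $M^P$'s pages diffeomorphic to the padding's.

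The paper closes this gap with one additional move you are missing: before choosing~$\gamma$ or~$T$, it assumes without loss of generality that $M^P$ has the \emph{minimal} number of page boundary components among all planar irreducible subdomains in the interior of~$M$. With this minimality in hand, if the post-surgery manifold were symmetric, its other irreducible piece (coming from the untouched padding) would be planar with $\ell \le k$ boundary components, contradicting minimality; the remaining boundary case ($T \subset \p M^P$) is then dispatched by observing that symmetry of~$M'$ would force~$M$ itself to have been symmetric. In the example above, the paper's reduction replaces $M^P$ by the annulus piece (so $k$ drops to~$1$) and then decouples along one of the~$T_i$, yielding planar $0$-torsion. Your case analysis ``keeping the padding as a witness that symmetry fails'' cannot substitute for this minimality assumption.
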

\begin{proof}
By assumption, $M$ contains a planar piece $M^P$ with nonempty boundary, 
and if $T_0 \subset \p M^P$ denotes a boundary component, then the
pages in~$M^P$ have exactly one boundary component adjacent to~$T_0$.
The pages in $M^P$ have $k+1$ boundary components, and without loss
of generality we may assume no other irreducible subdomain in the interior
of~$M$ has planar pages with fewer boundary components than this.  
Since $k \ge 1$, these pages have at least one boundary component
adjacent to some binding circle~$\gamma$ or interface torus~$T$ distinct
from~$T_0$.  Performing $\DD$-capping sugery to remove~$\gamma$ 
or $\DD$-decoupling surgery to remove~$T$ produces a new
manifold $M_1$ containing a planar irreducible subdomain $M^P_1$ whose pages
have~$\ell$ boundary components where~$\ell$ is either~$k$ or $k-1$; the
latter can only result from a decoupling surgery along $T \subset M^P
\setminus \p M^P$.  Thus $M_1$ is a planar
$(\ell-1)$-torsion domain unless it is symmetric.
The latter would mean $\p M_1 = \emptyset$, hence also $\p M = \emptyset$,
and $\overline{M_1 \setminus M^P_1}$ is also irreducible and has planar pages
with~$\ell$ boundary components.  This cannot arise from capping
surgery along a binding circle or decoupling
surgery along a torus in the interior of $M^P$, as we assumed all
planar pages in the interior of~$M$ outside of $M^P$ to have at least
$k+1 \ge \ell + 1$ boundary components.  The only remaining possibility would be
decoupling surgery along $T \subset \p M^P$, but then symmetry of~$M_1$
would imply that $M$ must also have been symmetric, hence a contradiction.
\end{proof}

\subsection{A model for a blown up summed open book}
\label{sec:model}

Assume~$(M_0,\xi)$ is a compact contact $3$-manifold, possibly 
with boundary, supported by a blown up summed open book $\boldsymbol{\pi}$
with binding~$B$, interface~$\iI$ and fibration
$$
\pi : M_0 \setminus (B \cup \iI) \to S^1.
$$
We assume that each connected component of~$M_0$ contains at least one
component of $B \cup \iI \cup \p M_0$, so that $\boldsymbol{\pi}$ will
support a contact structure everywhere.
It will be useful to identify this with the following
generalization of the notion of an \emph{abstract} open
book (cf.~\cite{Etnyre:lectures}).  The closure of a fiber
$\overline{\pi^{-1}(\text{const})} \subset M_0$ is the image of some 
compact oriented surface $S$ with boundary 
under an immersion
$$
\iota : S \looparrowright M_0,
$$
which is an embedding on the interior.  
The monodromy of the fibration then determines (up to isotopy) 
a diffeomorphism $\psi : S \to S$
which preserves connected components and is the identity 
in a neighborhood of the boundary, and we define the mapping torus
$$
S_\psi = (S \times \RR) / \sim
$$
with $(z,t + 1) \sim (\psi(z),t)$ for all $t \in \RR$, $z \in S$.
Denote by
$$
\phi : S_\psi \to \RR / \ZZ = S^1
$$
the natural fibration.  

Let us label the connected components of $\p S$ by
$$
\p S = \p_1 S \cup \ldots \cup \p_n S,
$$
and for each $i=1,\ldots,n$ choose an open collar neighborhood 
$\uU^i \subset  S$ of
$\p_i S$ on which $\psi$ is the identity.  Denote the union of
all these neighborhoods by~$\uU \subset  S$.  Now for each $i=1,\ldots,n$, 
choose positively oriented coordinates
$$
(\theta,\rho) : \uU^i \to S^1 \times [r,1)
$$
for some $r \in (0,1)$.  These neighborhoods give rise to 
corresponding collar neighborhoods of $\p  S_\psi$,
$$
\uU^i_\psi = \uU^i \times S^1 \subset  S_\psi,
$$
which can be identified with $S^1 \times [r,1) \times S^1$
via the coordinates $(\theta,\rho,\phi)$.
The index set $I := \{1,\ldots,n\}$ comes with an obvious partition
$$
I = I_B \cup I_\iI \cup I_\p,
$$
where
\begin{equation*}
\begin{split}
I_B &= \{ i \in I \ |\ \iota(\p_i S) \subset B  \}, \\
I_\iI &= \{ i \in I \ |\ \iota(\p_i S) \subset \iI  \}, \\
I_\p &= \{ i \in I \ |\ \iota(\p_i S) \subset \p M_0  \}.
\end{split}
\end{equation*}
There is also a free $\ZZ_2$-action on $I_\iI$ defined via an involution
$$
\sigma : I_\iI \to I_\iI
$$
such that $j = \sigma(i)$ if and only if $\iota(\p_i  S)$ and
$\iota(\p_j S)$ lie in the same connected component of~$\iI$.
Now define for each $i \in I$ the domain
$$
\nN_i = 
\begin{cases}
S^1 \times \DD & \text{ if $i \in I_B$,}\\
S^1 \times [-1,1] \times S^1 & \text{ if $i \in I_\iI$,}\\
S^1 \times [0,1] \times S^1 & \text{ if $i \in I_\p$,}
\end{cases}
$$
and denote by $(\theta,\rho,\phi)$ the natural coordinates on~$\nN_i$,
where for $i \in I_B$ we view $(\rho,\phi)$ as polar
coordinates on the disk with the angle normalized to take values
in $S^1 = \RR / \ZZ$.  Denote the subsets $\{ \rho=0 \}$ by
$$
B^\Abstract = \bigsqcup_{i \in I_B} S^1 \times \{0\} \subset
\bigsqcup_{i \in I_B} \nN_i,
\qquad
\iI^\Abstract = \bigsqcup_{i \in I_\iI} S^1 \times \{0\} \times S^1 \subset
\bigsqcup_{i \in I_\iI} \nN_i.
$$
The chosen coordinates on the neighborhoods $\uU^i_\psi$ then 
determine a gluing map
$$
\Phi : \bigcup_{i \in I} \uU^i_\psi \to \bigsqcup_{i\in I} \nN_i
$$
which takes $\uU^i_\psi$ to $\nN_i$, 
and we use this to define a new compact and oriented manifold, 
possibly with boundary,
$$
M_0^\Abstract = 
 S_\psi \cup_\Phi \left( \bigsqcup_{i\in I} \nN_i \right) \Bigg/ \sim,
$$
where the equivalence relation identifies $(\theta,\rho,\phi) \in
\nN_i$ for $i \in I_\iI$ with $(\theta,-\rho,-\phi) \in \nN_{\sigma(i)}$.
This naturally contains $B^\Abstract$ and $\iI^\Abstract$ as submanifolds,
and the fibration $\phi :  S_\psi \to S^1$ can be extended
over $M_0^\Abstract \setminus (B^\Abstract \cup \iI^\Abstract)$ so that
it matches the canonical $\phi$-coordinate on $\nN_i$ wherever $\rho > 0$.
Now $M_0$ can be identified with $M_0^\Abstract$ via a diffeomorphism that 
maps $B$ to $B^\Abstract$ and $\iI$ to $\iI^\Abstract$, and transforms
the fibration $\pi : M_0 \setminus (B \cup \iI) \to S^1$ to~$\phi$.

A supported contact structure on~$M_0^\Abstract$ can be defined as follows.
First, define a smooth $1$-form of the form
$$
\lambda_0 = \begin{cases}
d\phi & \text{ on $ S_\psi$,} \\
f_i(\rho)\, d\theta + g_i(\rho)\, d\phi & \text{ on $\nN_i$, 
$i \in I$,}
\end{cases}
$$
where $f_i , g_i : [0,1] \to \RR$ are smooth functions 
chosen to have the following properties:
\begin{enumerate}
\item As $\rho$ moves from~$0$ to~$1$, $\rho \mapsto 
(f_i(\rho),g_i(\rho)) \in \RR^2 \setminus \{0\}$ defines a path
through the first quadrant from $(1,0)$ to~$(0,1)$.
\item $\lambda_0$ is contact on $\{ 0 \le \rho < r \} \subset \nN_i$.
\item $f_i(\rho) = 0$ for $\rho \in [r,1]$.
\item $g_i(\rho) = 1$ for $\rho \in [r',1]$, for some positive number
$r' < r$.
\item $g_i'(\rho) > 0$ for $\rho \in (0,r')$.
\end{enumerate}
\begin{remark}
\label{remark:aremark}
The contact condition is satisfied if and only if
$f_i g_i' - f_i'g_i \ne 0$, except at $B^\Abstract$, where the coordinate singularity
changes the condition to $g_i''(0) \ne 0$.  One consequence is that
$f_i'(\rho) < 0$ for $\rho \in [r',r)$, hence
$f_i(r') > 0$.
The assumption that $\lambda_0$ is a \emph{smooth} $1$-form imposes some
additional conditions, namely for $i \in I_B$, $(\rho,\phi) \mapsto f_i(\rho)$
and $(\rho,\phi) \mapsto g_i(\rho) / \rho^2$ must define smooth functions
at the origin in~$\RR^2$ (in polar coordinates),
and for $i \in I_\iI$, $f_i$ and $g_i$ can be extended 
smoothly over $[-1,1]$ such that
$$
f_i(\rho) = f_{\sigma(i)}(-\rho),\qquad
g_i(\rho) = - g_{\sigma(i)}(-\rho).
$$
In particular this implies $(f_i(\rho),g_i(\rho)) = (0,-1)$ for
$\rho \in [-1,-r]$.  We will assume these conditions are always satisfied 
without further comment.
\end{remark}
The co-oriented distribution
$$
\xi_0 := \ker\lambda_0
$$
is a confoliation on $M_0^\Abstract$, which is integrable on the 
mapping torus $S_\psi$ and outside of this is a positive contact
structure.  To perturb it to a global contact structure, choose a
$1$-form $\alpha$ on $S$ which satisfies $d\alpha > 0$ and takes the form
\begin{equation}
\label{eqn:alphaBoundary}
\alpha = (2 - \rho)\, d\theta
\end{equation}
on $\uU^i$.  By a simple
interpolation trick (cf.~\cite{Etnyre:lectures}), $\alpha$ can be
used to construct a $1$-form $\alpha_\psi$ on $ S_\psi$ that
satisfies
$$
d\alpha_\psi|_{\xi_0} > 0 \quad \text{ and }\quad
\alpha_\psi = (2 - \rho)\, d\theta \text{ on $\uU^i_\psi$}.
$$
Choosing $\epsilon > 0$ sufficiently small,
we can bring $\ker(d\phi + \epsilon\alpha_\psi)$ sufficiently
$C^0$-close to $\xi_0$ on $ S_\psi$ so that 
$d\alpha_\psi|_{\ker(d\phi + \epsilon\alpha_\psi)} > 0$.
Then a contact form that equals $\lambda_0$ near $\p M_0^\Abstract$
can be defined by
\begin{equation}
\label{eqn:lambdaEpsilon}
\lambda_\epsilon = 
\begin{cases}
d\phi + \epsilon\, \alpha_\psi & \text{ on $S_\psi$,} \\
f_{i,\epsilon}(\rho)\, d\theta + d\phi & \text{ on $\{ \rho \in [r',
r] \} \subset \nN_i$,} \\
\lambda_0 & \text{ on $\{ \rho \le r'\}
\subset \nN_i$,}
\end{cases}
\end{equation}
where the fact that $f_i(r') > 0$ allows us for $\epsilon > 0$
sufficiently small to choose smooth
functions $f_{i,\epsilon} : [0,1] \to \RR$ satisfying
\begin{itemize}
\item $f_{i,\epsilon}(\rho) = f_i(\rho)$ for $\rho \in [0,r']$,
\item $f_{i,\epsilon}' < 0$ for $\rho \in [r',r]$,
and
\item $f_{i,\epsilon}(\rho) = \epsilon (2 - \rho)$ for $\rho \in
[r,1]$.
\end{itemize}
Note that for $i \in I_\iI$, $f_{i,\epsilon}$ also extends naturally over
$[-1,1]$ with $f_{i,\epsilon}(\rho) = f_{\sigma(i),\epsilon}(-\rho)$.
All contact forms that one can construct in this way are homotopic to each
other through families of contact forms, so the resulting contact structure
$$
\xi_\epsilon := \ker\lambda_\epsilon
$$
is uniquely determined up to isotopy.  Moreover, it is easy to check that
the Reeb vector field determined by $\lambda_\epsilon$ is everywhere 
positively transverse to the pages: in particular, $\lambda_\epsilon$ is a 
Giroux form for the blown up summed open book we've constructed on 
$M_0^\Abstract$, thus $(M_0^\Abstract,\xi_\epsilon)$ is
contactomorphic to $(M_0,\xi)$.

\subsection{A model decoupling cobordism}
\label{sec:modelCobordism}

Assume now that the manifold $M_0$ from the previous section is 
embedded into a closed contact $3$-manifold $(M,\xi)$
such that~$\xi$ is an extension of the contact structure that was
given on~$M_0$.  Without loss of generality, we can identify~$(M_0,\xi)$ with
the abstract model $(M_0^\Abstract,\xi_\epsilon)$, and assume
in particular that $\lambda_0$ and $\lambda_\epsilon$ are $1$-forms on~$M$
which restrict on~$M_0$ to the models constructed above, and on a
neighborhood of $\overline{M \setminus M_0}$ define matching contact forms 
whose kernel is~$\xi$.

Our goal in this section is to construct a weak symplectic cobordism that 
realizes a $\Sigma$-decoupling surgery along some set of
oriented interface tori
$$
\iI_0 = T_1 \cup \ldots \cup T_N \subset \iI.
$$
The chosen orientation of each~$T_j$ splits a tubular neighborhood
$\nN(T_j) \subset M$ of~$T_j$ naturally into positive and negative parts
$$
\nN(T_j) = \nN_-(T_j) \cup \nN_+(T_j)
$$
whose intersection is~$T_j$.
To simplify notation in the following, let us assume these neighborhoods are
chosen and the page boundary 
components $\p S = \p_1 S \cup \ldots \cup \p_n S$ are ordered so that
for each $j =1,\ldots,N$,
$$
\nN(T_j) = \nN_j = S^1 \times [-1,1] \times S^1
\quad \text{ and } \quad
\nN_+(T_j) = S^1 \times [0,1] \times S^1.
$$
We will fix on $\nN(T_j)$ the standard coordinates $(\theta,\rho,\phi)$ 
of~$\nN_j$, and assume all the functions chosen to define $\lambda_0$
and $\lambda_\epsilon$ are the same for all of these neighborhoods, so
we can write
$$
f = f_j, \ g = g_j, \ f_\epsilon = f_{j,\epsilon}
$$
for $j=1,\ldots,N$.

For Theorems~\ref{thm:2handles} and~\ref{thm:roundHandles},
the cobordism we construct will need to be attached to a trivial
cobordism of the form $([0,1] \times M, \omega)$, which will be 
impossible if our model symplectic form does not match $\omega$ at
least cohomologically at $\{1\} \times M$.  In order to realize the
right cohomology class in the model, we choose a closed $2$-form
$\Omega_0$ on~$M$ representing an arbitrary cohomology class for which the
condition \eqref{eqn:homological} is satisfied.  Since we only care about
$\Omega_0$ up to cohomology, we are free to add an exact $2$-form and thus
assume $\Omega_0$ satisfies
$$
\Omega_0 = c_j\, d\phi \wedge d\theta \quad \text{ on $\nN(T_j)$}
$$
for each $j=1,\ldots,N$, where $c_j \in \RR$ are constants satisfying
\begin{equation}
\label{eqn:homologicalcj}
\sum_{j=1}^N c_j = 0.
\end{equation}
Since $\lambda_0 \wedge d\lambda_\epsilon > 0$ everywhere on~$M$,
we can define an exact symplectic form on the trivial cobordism
$[0,1] \times M$ as follows: fix any smooth, strictly increasing function
$\varphi : [0,1] \to \RR$ with $\varphi(0)=0$ and
$|\varphi(t)|$ uniformly small, and set
\begin{equation}
\label{eqn:omegaSHS}
\omega_0 = d\left( \varphi(t) \lambda_0 + \lambda_\epsilon\right).
\end{equation}
If $\| \varphi \|_{L^\infty}$ is sufficiently small then $\omega_0$ is
symplectic and restricts positively to both~$\xi$ and the
pages of $\boldsymbol{\pi}$, everywhere on $[0,1] \times M$.
Now if $C > 0$ is a sufficiently large constant, then the $2$-form
\begin{equation}
\label{eqn:omegaC}
\omega_C := C\omega_0 + \Omega_0
\end{equation}
also has these properties.  In the following we shall always assume
$C$ is arbitrarily large whenever convenient.
Note that for the case of Theorems~\ref{thm:2handles}
and~\ref{thm:roundHandles} where the given $\omega$ on 
$[0,1] \times M$ is exact, we may assume without loss of generality
that $\Omega_0 \equiv 0$, see Remark~\ref{remark:concave}.

To construct a cobordism corresponding to the round handle attachment,
we shall first ``dig a hole'' in the trivial cobordism $[0,1]\times M$ near 
each of the tori $\{1\} \times T_j$.  In order to find nice coordinates near
the boundary of the hole, it will be useful to consider the vector field
$X_\theta$ on $[0,1] \times \nN(T_j)$ defined by the condition
$$
\omega_0(X_\theta,\cdot) = -d\theta.
$$

\begin{lemma}
\label{lemma:Xtheta}
The vector field $X_\theta$ is locally Hamiltonian with respect
to~$\omega_C$ and takes the form
\begin{equation}
\label{eqn:AB}
X_\theta = A(t,\rho)\, \p_t + B(t,\rho)\, \p_\rho
\end{equation}
for some smooth functions $A, B : [0,1] \times [1,1] \to \RR$ with the
following properties:
\begin{enumerate}
\item For $\pm \rho \in [r,1]$, $A(t,\rho)=0$ and 
$B(t,\rho) = \pm \frac{1}{\epsilon}$.
\item For $\pm \rho \in [r',r]$, $A(t,\rho)=0$ and $\pm B(t,\rho) > 0$.
\item For $\rho \in (-r',r')$, $A(t,\rho) < 0$.
\end{enumerate}
\end{lemma}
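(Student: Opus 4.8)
The plan is to compute $X_\theta$ directly from the definition $\omega_0(X_\theta,\cdot) = -d\theta$ on $[0,1]\times\nN(T_j)$, exploiting the fact that $\omega_0$ is, by \eqref{eqn:omegaSHS}, a fairly explicit $2$-form in the coordinates $(t,\theta,\rho,\phi)$ since on $\nN(T_j)$ we have $\lambda_0 = f(\rho)\,d\theta + g(\rho)\,d\phi$ and $\lambda_\epsilon = f_\epsilon(\rho)\,d\theta + d\phi$ (using that the chosen functions agree across all $\nN(T_j)$). Thus on this region $\varphi(t)\lambda_0 + \lambda_\epsilon = \bigl(\varphi(t)f(\rho) + f_\epsilon(\rho)\bigr)\,d\theta + \bigl(\varphi(t)g(\rho) + 1\bigr)\,d\phi$, and taking $d$ gives $\omega_0$ as a combination of $dt\wedge d\theta$, $dt\wedge d\phi$, $d\rho\wedge d\theta$, $d\rho\wedge d\phi$ with coefficients that are explicit functions of $(t,\rho)$. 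First I would write these coefficients down: set $P(t,\rho) = \varphi(t)f(\rho) + f_\epsilon(\rho)$ and $Q(t,\rho) = \varphi(t)g(\rho) + 1$, so that $\omega_0 = \p_t P\,dt\wedge d\theta + \p_\rho P\,d\rho\wedge d\theta + \p_t Q\,dt\wedge d\phi + \p_\rho Q\,d\rho\wedge d\phi$. A vector field of the form $X_\theta = A\,\p_t + B\,\p_\rho$ then satisfies $\iota_{X_\theta}\omega_0 = (A\,\p_t P + B\,\p_\rho P)\,d\theta + (A\,\p_t Q + B\,\p_\rho Q)\,d\phi - (\dots)\,dt - (\dots)\,d\rho$; demanding this equal $-d\theta$ forces $A\,\p_t Q + B\,\p_\rho Q = 0$ and $A\,\p_t P + B\,\p_\rho P = -1$, i.e.\ a $2\times 2$ linear system for $(A,B)$ whose determinant is $\p_t P\,\p_\rho Q - \p_t Q\,\p_\rho P = \varphi'(t)(f g' - f' g)(\rho)$ plus terms — more precisely one computes it equals $\varphi'(t)\bigl(f_\epsilon' g - f_\epsilon g'\bigr) + \dots$; in any case it is nonvanishing because $\omega_0$ is symplectic (the $dt\wedge d\theta\wedge d\rho\wedge d\phi$ component is exactly this determinant up to sign). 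Solving gives $A = -\p_\rho Q / \Delta$ and $B = \p_\rho P/\Delta$ for the appropriate determinant $\Delta$, which already shows $A$ and $B$ are smooth functions of $(t,\rho)$ alone, establishing the form \eqref{eqn:AB}. Note $X_\theta$ has no $\p_\theta$ or $\p_\phi$ component since $\iota_{\p_\theta}\omega_0$ and $\iota_{\p_\phi}\omega_0$ would contribute unwanted $dt,d\rho$ terms.

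Next I would verify the three regime-dependent properties by plugging in the known behavior of $f, g, f_\epsilon$ from \S\ref{sec:model} and Remark~\ref{remark:aremark}. For $\pm\rho\in[r,1]$: there $f\equiv 0$, $g\equiv 1$ (so $\p_\rho Q = 0$), and $f_\epsilon(\rho) = \epsilon(2-\rho)$ so $\p_\rho P = -\epsilon$; since also $\p_\rho Q = 0$ the equation $A\,\p_t Q + B\cdot 0 = 0$ with $\p_t Q = \varphi'(t)g = \varphi'(t)\neq 0$ forces $A = 0$, and then $B\,\p_\rho P = B(-\epsilon) = -1$ gives $B = 1/\epsilon$ for $\rho\in[r,1]$; the sign flip for $\rho\in[-1,-r]$ follows from $f_\epsilon(\rho) = f_\epsilon(-\rho)$ being even while the geometry is reflected, giving $B = -1/\epsilon$ (one must be slightly careful with the reflection conventions $f_i(\rho) = f_{\sigma(i)}(-\rho)$, $g_i(\rho) = -g_{\sigma(i)}(-\rho)$, but the upshot is the stated $\pm\frac{1}{\epsilon}$). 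For $\pm\rho\in[r',r]$: again $g\equiv 1$ so $\p_\rho Q = 0$, forcing $A = 0$ as before, and $B = -1/\p_\rho P = -1/f_\epsilon'(\rho)$; since $f_\epsilon' < 0$ on $[r',r]$ we get $B > 0$ there, with the mirrored sign on the negative side. For $\rho\in(-r',r')$: here $\lambda_\epsilon = \lambda_0$, so $P = (\varphi(t)+1)f(\rho)$ (wait — $f_\epsilon = f$ here) and $Q = \varphi(t)g(\rho)+1$; one computes $\Delta = \p_t P\,\p_\rho Q - \p_t Q\,\p_\rho P = \varphi'(t)f\cdot\varphi(t)g' - \varphi'(t)g\cdot(\varphi(t)+1)f' $... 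I would instead argue more robustly: $A = -\p_\rho Q/\Delta$ where $\p_\rho Q = \varphi(t)g'(\rho)$, and by property (5) of \S\ref{sec:model} $g' > 0$ on $(0,r')$ while $\varphi(t)\geq 0$ with $\varphi$ strictly increasing from $0$, so $\varphi(t) > 0$ for $t > 0$; combined with the sign of $\Delta$ fixed by the orientation convention (i.e.\ $\omega_0$ being a \emph{positive} symplectic form, so $\Delta > 0$ for the standard orientation $dt\wedge d\theta\wedge d\rho\wedge d\phi$), this yields $A < 0$ on $\rho\in(0,r')$ for $t>0$; the behavior at $\rho=0$ and on $(-r',0)$ follows from continuity and the reflection symmetry, giving $A < 0$ throughout $(-r',r')$ (for $t$ in the relevant range; near $t=0$ one uses that $\varphi$ is strictly increasing so $\varphi > 0$ immediately). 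Finally, local Hamiltonicity with respect to $\omega_C = C\omega_0 + \Omega_0$: since $\Omega_0 = c_j\,d\phi\wedge d\theta$ on $\nN(T_j)$ is closed and $\iota_{X_\theta}\Omega_0 = c_j\bigl(\iota_{X_\theta}d\phi\cdot d\theta - \iota_{X_\theta}d\theta\cdot d\phi\bigr) = c_j(0 - 0) = 0$ because $X_\theta$ has neither $\p_\theta$ nor $\p_\phi$ component, we get $\iota_{X_\theta}\omega_C = C\iota_{X_\theta}\omega_0 = -C\,d\theta$, which is closed, so $\Lie_{X_\theta}\omega_C = d\iota_{X_\theta}\omega_C = 0$, i.e.\ $X_\theta$ is locally Hamiltonian for $\omega_C$ (indeed globally Hamiltonian for $\omega_0$ with Hamiltonian $-\theta$ locally, the "local" being because $\theta\in S^1$).

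The main obstacle I anticipate is purely bookkeeping: getting the orientation and reflection conventions exactly right so that the signs in (1)--(3) come out as stated rather than with spurious flips. The reflection identification $(\theta,\rho,\phi)\in\nN_i \leftrightarrow (\theta,-\rho,-\phi)\in\nN_{\sigma(i)}$ together with $f_i(\rho) = f_{\sigma(i)}(-\rho)$, $g_i(\rho) = -g_{\sigma(i)}(-\rho)$ means that extending $f, g, f_\epsilon$ over $\rho\in[-1,1]$ must be done with the correct parities, and the symplectic form $\omega_0$ must be checked to be positive (not negative) with respect to whichever orientation of $\nN(T_j) = S^1\times[-1,1]\times S^1$ is induced from $M$. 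Once the sign of the determinant $\Delta$ is pinned down consistently — which is forced by $\omega_0$ being a genuine symplectic form compatible with the orientation — everything else is a short explicit computation using properties (1)--(5) of the functions $f_i, g_i$ and the defining properties of $f_{i,\epsilon}$. I would organize the writeup to first establish the form \eqref{eqn:AB} and local Hamiltonicity (the easy structural part), then handle the three $\rho$-regimes one at a time, each in a sentence or two.
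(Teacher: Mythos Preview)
Your approach is essentially the same as the paper's: write $\omega_0 = dP\wedge d\theta + dQ\wedge d\phi$ in the coordinates $(t,\theta,\rho,\phi)$, solve the resulting $2\times 2$ linear system for $(A,B)$, note that the determinant is nonvanishing because $\omega_0$ is symplectic, and read off the three regime properties from the explicit formulas together with the known behavior of $f,g,f_\epsilon,\varphi$. The paper records the solution as $A = (\varphi+1)g'/\Delta$ and $B = -\varphi' g/\Delta$ with $\Delta < 0$ (equivalently your $\Delta' > 0$), and the locally Hamiltonian claim is handled exactly as you do.

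There is one computational slip worth flagging. You write $\lambda_\epsilon = f_\epsilon(\rho)\,d\theta + d\phi$ throughout $\nN(T_j)$, but this is only valid for $|\rho| \ge r'$; on $\{|\rho| \le r'\}$ one has $\lambda_\epsilon = \lambda_0 = f\,d\theta + g\,d\phi$, so the $d\phi$ coefficient is $g(\rho)$, not~$1$. Consequently the correct $Q$ is $(\varphi(t)+1)g(\rho)$ rather than $\varphi(t)g(\rho)+1$, giving $\partial_\rho Q = (\varphi(t)+1)g'(\rho)$. This matters for property~(3): with the correct $Q$, positivity of $\partial_\rho Q$ on $(-r',r')$ uses only $g' > 0$ there and $\varphi + 1 > 0$ (guaranteed since $\|\varphi\|_{L^\infty}$ is small), so $A < 0$ holds for \emph{all} $t\in[0,1]$; with your $Q$ the factor is $\varphi(t)$, which vanishes at $t=0$ and the argument breaks. (Also, your general formula $B = \partial_\rho P/\Delta$ is a Cramer's-rule slip---it should be $B = \partial_t Q/\Delta'$---though you in fact use the correct value when working through the individual regimes.)
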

\begin{proof}
By a direct computation, $X_\theta$ takes the form \eqref{eqn:AB} with
$A$ and~$B$ satisfying the linear system
$$
\begin{pmatrix}
-\varphi'(t) f(\rho)  & - \left[ \varphi(t) f'(\rho) +
f_\epsilon'(\rho) \right] \\
\varphi'(t) g(\rho) & \left[ \varphi(t) + 1 \right] g'(\rho)
\end{pmatrix}
\begin{pmatrix}
A(t,\rho) \\
B(t,\rho)
\end{pmatrix}
=
\begin{pmatrix}
1 \\
0
\end{pmatrix} .
$$
The determinant~$\Delta(t,\rho)$ 
of this matrix is always negative since the contact
condition requires $f(\rho) g'(\rho) - f'(\rho)g(\rho) > 0$ for
$|\rho| < r$, and for $\pm \rho \in [r,1]$ we have $g(\rho)= \pm 1$,
$\pm f'(\rho) \le 0$ and $\pm f'_\epsilon(\rho) < 0$.  The general
solution for $A$ and~$B$ can thus be written as
$$
\begin{pmatrix}
A(t,\rho) \\
B(t,\rho)
\end{pmatrix} = \frac{1}{\Delta(t,\rho)}
\begin{pmatrix}
\left[ \varphi(t) + 1 \right] g'(\rho) \\
- \varphi'(t) g(\rho) 
\end{pmatrix}.
$$
The stated conditions on $A(t,\rho)$ and $B(t,\rho)$ then follow 
immediately from the conditions we've placed on $f$, $g$, $f_\epsilon$
and~$\varphi$.

In light of \eqref{eqn:AB}, $X_\theta$ is in the kernel of
$d\phi \wedge d\theta$, and we conclude easily that it is locally
Hamiltonian since
$$
\Lie_{X_\theta}\omega_C = d \iota_{X_\theta}(C\omega_0 + 
c_j\,d\phi \wedge d\theta) = d\left( -C\,d\theta\right) = 0.
$$
\end{proof}

\begin{figure}
\begin{center}
\includegraphics{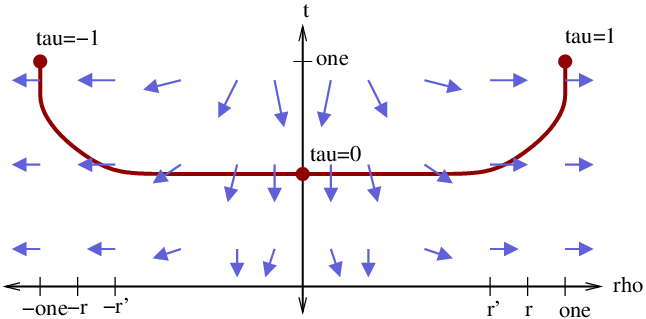}
\caption{\label{fig:hole}
The path $(t(\tau),\rho(\tau))$ transverse to the vector field
of \eqref{eqn:AB}.}
\end{center}
\end{figure}

Due to the lemma, we can choose a smoothly embedded curve
$$
[-1,1] \to [1/2,1] \times [-1,1] : \tau \mapsto (t(\tau),\rho(\tau))
$$
that is everywhere transverse to the vector field \eqref{eqn:AB} and
also satisfies $(t(0),\rho(0)) = (1/2,0)$ and
$$
(t(\tau),\rho(\tau)) = (\pm\tau,\pm 1)
$$
near $\tau = \pm 1$ (see Figure~\ref{fig:hole}).
Writing the annulus as $\AA = [-1,1] \times S^1$, use the curve
just chosen to define an embedding 
$$
\Psi : S^1 \times \AA \hookrightarrow [0,1] \times \nN(T_j) :
(\theta,\tau,\phi) \mapsto (t(\tau),\theta,\rho(\tau),\phi),
$$
which traces out a smooth hypersurface $H_{T_j} \subset [0,1] \times \nN(T_j)$
that meets $\{1\} \times M$ transversely at the pair of tori 
$\{1\} \times \p \nN(T_j)$.  Denote by
$$
\uU_{T_j} \subset [0,1] \times M
$$
the interior of the component of $([0,1] \times M) \setminus H_{T_j}$
that contains $\{1\} \times T_j$ (see Figure~\ref{fig:hole2}).  
Observe that by construction, $\uU_{T_j}$ 
lies entirely within $[1/2,1] \times \nN(T_j)$, and the locally
Hamiltonian vector field
$X_\theta$ points transversely outward at $\p\overline{\uU}_T = H_{T_j}$.
Thus for sufficiently small $\delta > 0$, we can use the 
flow $\varphi^t_{X_\theta}$ of $X_\theta$ to parametrize a neighborhood of 
$H_{T_j}$ in $\overline{\uU}_T$ by an embedding
\begin{equation*}
\begin{split}
\widetilde{\Psi} : (1 - \delta,1] \times S^1 \times \AA &\hookrightarrow
[0,1] \times M \\
(\sigma,\theta,\tau,\phi) &\mapsto \varphi_{X_\theta}^{\sigma-1}(\Psi(\theta,
\tau,\phi)).
\end{split}
\end{equation*}

\begin{lemma}
\label{lemma:pullback}
We have 
\begin{equation}
\label{eqn:symp}
\widetilde{\Psi}^*\omega_0 = - d\left( \sigma\, d\theta\right) + d\eta,
\end{equation}
where $\eta$ is an $S^1$-invariant $1$-form on $S^1 \times \AA$ that satisfies
$$
\eta = \pm \left[ \varphi(\pm\tau) + 1 \right] \, d\phi
\quad\text{ near $\{\tau = \pm 1\} = \p (S^1 \times \AA)$},
$$
and $d\theta \wedge d\eta > 0$ everywhere.
\end{lemma}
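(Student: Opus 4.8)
The plan is to use the fact that $\widetilde{\Psi}$ is by construction the time-$(\sigma-1)$ flow of $X_\theta$ applied to the embedding $\Psi$ of $S^1 \times \AA$ onto the hypersurface $H_{T_j}$, so that $d\widetilde{\Psi}(\partial_\sigma) = X_\theta$. Two consequences of the defining identity $\iota_{X_\theta}\omega_0 = -d\theta$ are then immediate: first, since $X_\theta$ has no $\partial_\theta$-component (Lemma~\ref{lemma:Xtheta}) and $\Psi$ preserves the $\theta$-coordinate, we get $\iota_{\partial_\sigma}\widetilde{\Psi}^*\omega_0 = -\widetilde{\Psi}^*d\theta = -d\theta$; second, $\Lie_{\partial_\sigma}\widetilde{\Psi}^*\omega_0 = \widetilde{\Psi}^*\Lie_{X_\theta}\omega_0 = \widetilde{\Psi}^* d(-d\theta) = 0$. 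Writing $\widetilde{\Psi}^*\omega_0 = d\sigma \wedge \nu + \mu$ with $\nu, \mu$ free of $d\sigma$, these force $\nu = -d\theta$ and $\partial_\sigma\mu = 0$, while $d\widetilde{\Psi}^*\omega_0 = 0$ then gives $d\mu = 0$ on each slice $\{\sigma = \text{const}\}$. Restricting to $\{\sigma = 1\}$, where $\widetilde{\Psi} = \Psi$, identifies $\mu = \Psi^*\omega_0$. Thus $\widetilde{\Psi}^*\omega_0 = -d(\sigma\,d\theta) + \Psi^*\omega_0$, and it remains to write $\Psi^*\omega_0$ as $d\eta$ with the stated properties.

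To compute $\Psi^*\omega_0$ I would use $\omega_0 = d\beta_0$ for $\beta_0 = \varphi(t)\lambda_0 + \lambda_\epsilon$. On $\nN(T_j)$ both $\lambda_0$ and $\lambda_\epsilon$ are $\rho$-dependent combinations of $d\theta$ and $d\phi$, so $\Psi^*\beta_0 = p(\tau)\,d\theta + q(\tau)\,d\phi$, where $p(\tau) = \varphi(t(\tau)) f(\rho(\tau)) + f_\epsilon(\rho(\tau))$ and $q(\tau) = \varphi(t(\tau)) g(\rho(\tau)) + g_\epsilon(\rho(\tau))$, with $g_\epsilon$ the $d\phi$-component of $\lambda_\epsilon$ (equal to $1$ for $\rho \in [r',1]$, to $g$ for $\rho \in [0,r']$, extended oddly to $\rho < 0$). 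Hence $\Psi^*\omega_0 = d\big(p(\tau)\,d\theta + q(\tau)\,d\phi\big)$. Near $\tau = \pm 1$ the curve satisfies $(t(\tau),\rho(\tau)) = (\pm\tau,\pm 1)$, and the normal-form values $f(\pm 1) = 0$, $f_\epsilon(\pm 1) = \epsilon$, $g(\pm 1) = g_\epsilon(\pm 1) = \pm 1$ --- the $\rho < 0$ values being dictated by the gluing conventions $f_i(\rho) = f_{\sigma(i)}(-\rho)$, $g_i(\rho) = -g_{\sigma(i)}(-\rho)$ --- give $p \equiv \epsilon$ and $q(\tau) = \pm[\varphi(\pm\tau) + 1]$ there. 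Since $\epsilon\,d\theta$ is closed, $\eta := (p(\tau) - \epsilon)\,d\theta + q(\tau)\,d\phi$ satisfies $d\eta = \Psi^*\omega_0$, is $S^1$-invariant, and equals $\pm[\varphi(\pm\tau)+1]\,d\phi$ near $\tau = \pm 1$.

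Finally, $d\theta \wedge d\eta = q'(\tau)\,d\theta \wedge d\tau \wedge d\phi$, so the positivity claim reduces to $q'(\tau) > 0$. Here I would note that $q'(\tau)$ is the pairing of $\gamma := d\big(\varphi(t) g(\rho) + g_\epsilon(\rho)\big)$, a nowhere-vanishing $1$-form in $dt, d\rho$ on $[0,1] \times \nN(T_j)$ (the contact condition rules out $g$ and $g'$ vanishing together), with the tangent of the curve $\tau \mapsto (t(\tau),\rho(\tau))$; and the equation $\iota_{X_\theta}\omega_0 = -d\theta$ says exactly that $\gamma(X_\theta) = 0$, so $\ker\gamma = \langle X_\theta\rangle$. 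Since the curve was chosen transverse to $X_\theta$, $q'$ never vanishes, and its sign is fixed by the orientation conventions --- equivalently by the fact that $\widetilde{\Psi}^*\omega_0$ is symplectic with $(\widetilde{\Psi}^*\omega_0)^2 = -2\,d\sigma \wedge d\theta \wedge d\eta$. I expect the main delicacy to be precisely this orientation bookkeeping together with keeping the $\sigma$-gluing sign conventions for $f_i, g_i, f_{i,\epsilon}$ straight across $\rho = 0$, since it is exactly those conventions that produce the $\pm$ signs and the argument $\varphi(\pm\tau)$ in the boundary normal form for $\eta$.
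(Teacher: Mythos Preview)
Your proof is correct and follows essentially the same approach as the paper: both define $\eta$ as the pullback by $\Psi$ of the primitive $\varphi(t)\lambda_0 + \lambda_\epsilon - \epsilon\,d\theta$, both derive the formula for $\widetilde{\Psi}^*\omega_0$ from $\iota_{X_\theta}\omega_0 = -d\theta$ together with the symplectic flow of~$X_\theta$, and both deduce $d\theta\wedge d\eta \ne 0$ from transversality of the curve to~$X_\theta$. The only cosmetic differences are that the paper packages the positivity argument as a nonvanishing of $\omega_0\wedge\omega_0(X_\theta,\Psi_*\partial_\theta,\Psi_*\partial_\tau,\Psi_*\partial_\phi)$ rather than your explicit $q'(\tau)\ne 0$ via $\ker\gamma = \langle X_\theta\rangle$, and fixes the sign by direct inspection near $\tau=\pm 1$ (where $q'(\tau)=\varphi'(\pm\tau) > 0$) rather than via the orientation of $(\widetilde{\Psi}^*\omega_0)^2$.
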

\begin{proof}
In $[0,1] \times \nN(T)$ we can write $\omega_0 = d\Lambda$, where
$$
\Lambda := \varphi(t)\, \lambda_0 + \lambda_\epsilon - \epsilon\, d\theta.
$$
Then defining $\eta := \Psi^*\Lambda$ on $S^1 \times \AA$, 
we have $d\eta = \Psi^*\omega_0$ and can write~$\eta$ explicitly
near $\tau = \pm 1$ by plugging in $t = \pm \tau$, $\rho= \pm 1$, 
$f(\rho) = 0$, $g(\rho) = \pm 1$
and $f_\epsilon(\rho) = \epsilon (2 - \rho) = \epsilon$, hence
\begin{equation*}
\begin{split}
\eta &= \Psi^*\left(\varphi(t)\, \lambda_0 + \lambda_\epsilon - 
\epsilon\, d\theta  \right) = 
\varphi(\pm\tau)( \pm d\phi) + \epsilon\, d\theta \pm d\phi - \epsilon\, d\theta \\
&= \pm \left[ \varphi(\pm\tau) + 1 \right] \, d\phi
\end{split}
\end{equation*}
as desired.  Since $\Lambda$ is invariant under the $S^1$-action by
translation of~$\theta$, $\eta$ is also $S^1$-invariant.
The claim $d\theta \wedge d\eta > 0$ is a consequence of the
fact that $H_{T_j}$ is transverse to the vector field $X_\theta$, which
is $\omega_0$-dual to $-d\theta$: indeed, ignoring 
combinatorial factors we find
\begin{equation*}
\begin{split}
d\theta \wedge d\eta(\p_\theta,\p_\tau,\p_\phi) &=
-\Psi^*(\iota_{X_\theta}\omega_0) \wedge \Psi^*\omega_0(\p_\theta,\p_\tau,\p_\phi)\\
&\propto -\omega_0 \wedge \omega_0(X_\theta,\Psi_*\p_\theta,\Psi_*\p_\tau,
\Psi_*\p_\phi) \ne 0.
\end{split}
\end{equation*}
It follows that $d\theta \wedge d\eta$ is positive since this is obviously
true near $\tau = \pm 1$.  The formula 
\eqref{eqn:symp} now follows from the fact that 
$-d\theta = \iota_{X_\theta}\omega_0$ and $X_\theta$ has a symplectic flow.
\end{proof}

\begin{remark}
The embedding $\widetilde{\Psi}$ reverses orientations.  This will be
convenient in the following since the handle $\roundhandle_\Sigma =
-\Sigma \times \AA$ also comes with a reversed orientation.
\end{remark}

Since $\widetilde{\Psi}$ acts trivially on the coordinates~$\phi$
and~$\theta$, the lemma also yields a formula for the pullback of $\omega_C$,
namely
\begin{equation}
\label{eqn:sympC}
\widetilde{\Psi}^*\omega_C = C\left[- d\left( \sigma\, d\theta\right) + d\eta
\right] + c_j\, d\phi \wedge d\theta.
\end{equation}
For each $j=1,\ldots,N$, denote by $\wW_j \subset [0,1] \times \nN(T_j)$
the image of the map $\widetilde{\Psi}$ as constructed above
(Figure~\ref{fig:hole2}): $\wW_j$ thus
inherits negatively oriented coordinates 
$(\sigma,\theta,\tau,\phi) \in (1-\delta,1] \times
S^1 \times [-1,1] \times S^1$ in which $\omega_C$ has the form given
in \eqref{eqn:sympC}.

\begin{figure}
\begin{center}
\includegraphics{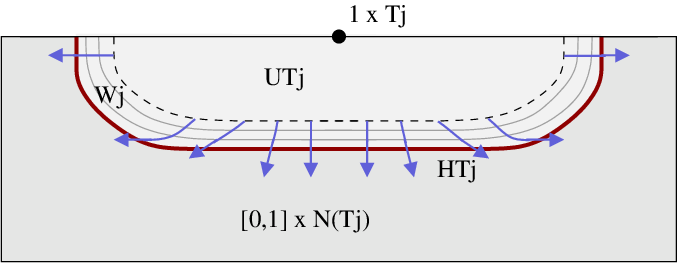}
\caption{\label{fig:hole2}
Digging a hole in $[0,1] \times \nN(T_j)$ near $\{1\} \times T_j$.}
\end{center}
\end{figure}

We are now ready to write down a smooth model of the round handle attachment.
As in \S\ref{sec:intro}, assume $\Sigma$ is a compact, connected and
oriented surface with~$N$ boundary components
$$
\p\Sigma = \p_1\Sigma \cup \ldots \cup \p_N\Sigma.
$$
Near each component $\p_j\Sigma$, identify a collar neighborhood 
$\vV_j \subset \Sigma$ with
$(1 - \delta,1] \times S^1$ and denote the resulting oriented coordinates by
$(\sigma,\theta)$.  Then denote the union of all the subsets
$\uU_{T_j}$ by $\uU_{\iI_0}$ and define the cobordism
$$
W = \left(([0,1] \times M) \setminus \uU_{\iI_0}\right) \cup
(-\Sigma \times \AA)
$$
by removing $\uU_{\iI_0}$ from $[0,1] \times M$ and replacing it by 
the handle $-\Sigma \times \AA$, gluing $\vV_j \times \AA$ to $\wW_j$
via the natural identification of the coordinates $(\sigma,\theta,\tau,\phi)$.
This yields a smooth $4$-manifold with two boundary components 
$$
\p W = M' \sqcup (-M),
$$
where we identify $M$ with $\{0\} \times M$ and write
$$
M' = ((\{1\} \times M) \setminus \nN(\iI_0)) \cup ( -\Sigma_+ 
\times S^1) \cup (-\Sigma_- \times S^1),
$$
using the identification $\Sigma \times \p\AA = (\Sigma_+\times S^1) \sqcup
(\Sigma_- \times S^1)$ defined in \eqref{eqn:Sigmapm}.
The oriented surfaces $-(\Sigma_+ \sqcup \Sigma_-) \times \{\phi\}$
now glue together smoothly
with the fibers $\pi^{-1}(\phi)$ in $M \setminus \nN(\iI_0)$ to form the
pages of the natural blown up summed open book $\boldsymbol{\pi}'$ on~$M'$
obtained from $\boldsymbol{\pi}$ by $\Sigma$-decoupling surgery along~$\iI_0$.
It remains to define a suitable symplectic form on $\Sigma \times \AA$ that
matches \eqref{eqn:sympC} near $\p\Sigma \times \AA$ and is positive on
these pages.

\begin{lemma}
\label{lemma:extendingOmega0}
There exists a symplectic form on $-\Sigma \times \AA$ that matches
\eqref{eqn:symp} near $\p\Sigma \times \AA$ and is positive on the
oriented surfaces $\{p\} \times \AA$ for any 
$p \in \Sigma \setminus (\vV_1 \cup \ldots \cup \vV_N)$
and $-\Sigma \times \{(\tau,\phi)\}$ for any $(\tau,\phi) \in \AA$, and makes
$T(\Sigma \times\{*\})$ and $T(\{*\} \times \AA)$ into symplectically
orthogonal symplectic subspaces everywhere along $\Sigma \times \p\AA$.
\end{lemma}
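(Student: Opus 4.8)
The plan is to write $\omega_\Sigma$ down by hand, as the pullback of an area form from $\Sigma$ plus a vertical area form on $\AA$ plus one explicit coupling term, the shape of which is dictated by the form of $\eta$ near $\p\Sigma\times\AA$.

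First I would make the $1$-form $\eta$ of Lemma~\ref{lemma:pullback} completely explicit. Since $\eta = \widetilde{\Psi}{}^*\Lambda$ is obtained by pulling back $\Lambda = \varphi(t)\,\lambda_0 + \lambda_\epsilon - \epsilon\,d\theta$ along $\Psi(\theta,\tau,\phi) = (t(\tau),\theta,\rho(\tau),\phi)$, every coefficient of $\eta$ is a function of $\tau$ alone, so $\eta = a(\tau)\,d\theta + b(\tau)\,d\phi$ with $a \equiv 0$ and $b(\tau) = \pm\bigl(\varphi(\pm\tau)+1\bigr)$ near $\tau = \pm1$; moreover $b' > 0$ on all of $[-1,1]$, because $d\eta = a'(\tau)\,d\tau\wedge d\theta + b'(\tau)\,d\tau\wedge d\phi$ and Lemma~\ref{lemma:pullback} asserts $d\theta\wedge d\eta > 0$. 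Hence in a collar $\vV_j\times\AA$ with coordinates $(\sigma,\theta,\tau,\phi)$ the form to be matched is
\[
\widetilde{\Psi}{}^*\omega_0 = -\,d\sigma\wedge d\theta + a'(\tau)\,d\tau\wedge d\theta + b'(\tau)\,d\tau\wedge d\phi .
\]

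The ansatz I would then use is the following. Choose a $1$-form $\nu$ on $\Sigma$ with $\nu = d\theta$ on each collar $\vV_j$, and an area form $\Omega_\Sigma$ on $\Sigma$ with $\Omega_\Sigma = -\,d\sigma\wedge d\theta$ on each $\vV_j$ and $\Omega_\Sigma > 0$ with respect to the orientation of $-\Sigma$. Writing $\pi_\Sigma$ for the projection $\Sigma\times\AA\to\Sigma$, I would put
\[
\omega_\Sigma := \pi_\Sigma^*\Omega_\Sigma + d\bigl(a(\tau)\,\pi_\Sigma^*\nu\bigr) + b'(\tau)\,d\tau\wedge d\phi .
\]
Near $\p\Sigma\times\AA$, where $\nu = d\theta$ (so $d\nu = 0$) and $\Omega_\Sigma = -\,d\sigma\wedge d\theta$, this agrees with the germ above, so \eqref{eqn:symp} is matched; it is exact, hence closed. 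Since any three $1$-forms pulled back from the surface $\Sigma$ wedge to zero, a short computation using $d\bigl(a(\tau)\,\pi_\Sigma^*\nu\bigr) = a'(\tau)\,d\tau\wedge\pi_\Sigma^*\nu + a(\tau)\,\pi_\Sigma^*d\nu$ reduces $\omega_\Sigma\wedge\omega_\Sigma$ to $2\,b'(\tau)\,\pi_\Sigma^*\bigl(\Omega_\Sigma + a(\tau)\,d\nu\bigr)\wedge d\tau\wedge d\phi$, so $\omega_\Sigma$ is symplectic (inducing the reversed orientation on $\roundhandle_\Sigma = -\Sigma\times\AA$) as soon as $\Omega_\Sigma + a(\tau)\,d\nu$ is a positive area form on $-\Sigma$ for every $\tau\in[-1,1]$. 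Granting that, the rest is immediate: the restriction of $\omega_\Sigma$ to a slice $\{p\}\times\AA$ is $b'(\tau)\,d\tau\wedge d\phi > 0$, its restriction to a page $-\Sigma\times\{(\tau,\phi)\}$ is $\Omega_\Sigma + a(\tau)\,d\nu > 0$, and along $\Sigma\times\p\AA$ (where $a\equiv 0$, hence also $d(a\,\pi_\Sigma^*\nu)=0$) it is the direct sum $\pi_\Sigma^*\Omega_\Sigma + b'(\tau)\,d\tau\wedge d\phi$, which makes $T(\Sigma\times\{*\})$ and $T(\{*\}\times\AA)$ symplectic and mutually $\omega_\Sigma$-orthogonal.

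The one place where something genuinely must be checked --- and the main obstacle --- is securing positivity of $\Omega_\Sigma + a(\tau)\,d\nu$ on $-\Sigma$ simultaneously for all $\tau$: here $\|a\|_{C^0}$ is of order one, not small, and $d\nu$ cannot be made uniformly small since $\int_\Sigma d\nu = \sum_j\int_{\p_j\Sigma}d\theta = N \neq 0$. The resolution I would give is that $d\nu$ is supported on a compact set $K\subset\interior\Sigma$ (it vanishes where $\nu = d\theta$), while the prescribed germ $-\,d\sigma\wedge d\theta$ of $\Omega_\Sigma$ near $\p\Sigma$ is already a positive area form there; so one is free to enlarge $\Omega_\Sigma$ over $K$ as much as desired without altering it near $\p\Sigma$ --- for instance by adding a large positive multiple of a fixed nonnegative $2$-form supported in $\interior\Sigma$ and positive on $K$. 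Taking the multiple large compared with $\|a\|_{C^0}$ and $|d\nu|$ forces $\Omega_\Sigma$ to dominate $|a(\tau)|\,|d\nu|$ pointwise on $K$, which together with $\Omega_\Sigma > 0$ off $K$ gives the claim. Everything else is a routine verification.
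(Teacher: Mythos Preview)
Your argument is correct, and it takes a genuinely different route from the paper's. The paper does not exploit the explicit form $\eta = a(\tau)\,d\theta + b(\tau)\,d\phi$; instead it picks an auxiliary $1$-form $\eta_0$ on~$\AA$ with $d\eta_0 > 0$ that agrees with~$\eta$ near $S^1 \times \p\AA$, interpolates via a cutoff $\beta(\sigma)$ to get $\beta\eta + (1-\beta)\eta_0$, and writes $\omega_0' = -d\mu + d(\beta\eta + (1-\beta)\eta_0)$ where $\mu$ is a $1$-form on~$\Sigma$ with $d\mu > 0$ and $\mu = \psi(\sigma)\,d\theta$ on the collars. The nondegeneracy is then forced by making $\psi'(\sigma)$ large on the support of~$\beta'$. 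So the paper's ``largeness'' lives in the collar (in the radial derivative of a function), whereas yours lives in the interior (in the area form~$\Omega_\Sigma$ on the support of~$d\nu$). Your version has the virtue of being more transparent---the split structure along $\Sigma \times \p\AA$ and the positivity on $\{p\}\times\AA$ are immediate rather than requiring a separate check---and as a bonus, the restriction of your $\omega_\Sigma$ to any $\ell \times \{\tau\} \times S^1$ vanishes identically (since every term either contains $d\tau$ or is pulled back from~$\Sigma$), which makes the content of the paper's later Remark on Lagrangian tori automatic. The paper's approach, on the other hand, does not need to know that $\eta$ has the special $\tau$-dependent form and would work for any $S^1$-invariant~$\eta$ satisfying the conclusions of the preceding lemma.
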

\begin{proof}
We will use a standard deformation trick to simplify \eqref{eqn:symp}
on each of the regions $\vV_j \times \AA$ so that it can be extended
as a split symplectic form.
Choose a $1$-form $\eta_0$ on~$\AA$ with $d\eta_0 > 0$
and lift it in the obvious way to $S^1 \times \AA$.  Since
$$
\int_{\{*\} \times \p \AA} \eta = 2\left[ \varphi(1) + 1 \right] > 0
$$
and $\eta$ has no $d\theta$-term near~$S^1 \times \p\AA$, we can also arrange
for $\eta_0$ to match $\eta$ on a neighborhood of $S^1 \times \p\AA$.
Next choose a smooth cutoff function $\tilde{\beta} : (1 - \delta,1] \to [0,1]$
that satisfies $\tilde{\beta}(\sigma)=0$ near $\sigma=1 - \delta$ and 
$\tilde{\beta}(\sigma)=1$ near $\sigma=1$, and use this to define a
smooth function $\beta : \Sigma \to [0,1]$ which satisfies
$$
\beta(\sigma,\theta) = \tilde{\beta}(\sigma) \text{ on $\vV_j$,}
\qquad
\beta \equiv 0 \text{ on $\Sigma \setminus (\vV_1 \cup \ldots \cup \vV_N)$}.
$$
We observe that the expression
$$
\beta \eta + (1 - \beta)\, \eta_0
$$
now gives a well-defined $1$-form on $\Sigma \times \AA$ by lifting
$\eta_0$ from~$\AA$ to $\Sigma \times \AA$ 
and $\eta$ from $S^1 \times \AA$ to $(1-\delta,1] \times S^1 \times \AA
= \vV_j \times \AA$ in the obvious ways.

Choose also a smooth function $\psi : (1 - \delta,1] \to [1,\infty)$
satisfying $\psi' > 0$ and $\psi'(\sigma)=1$ near $\sigma = 1$, and a
$1$-form $\mu$ on~$\Sigma$ such that
$$
\mu = \psi(\sigma)\, d\theta \text{ in $\vV_j$,} \qquad
d\mu > 0 \text{ everywhere.}
$$
A suitable symplectic form on $\Sigma \times \AA$ can then be defined by
\begin{equation}
\label{eqn:Omega'}
\omega_0' = - d\mu + d \big( \beta \eta + (1 - \beta)\, \eta_0 \big).
\end{equation}
By construction, $\omega_0'$ matches \eqref{eqn:symp} near
$\p \Sigma \times \AA$, while near $\Sigma \times \p\AA$ and outside of
the regions $\vV_j \times \AA$ it takes the split form
$$
- d \mu + d\eta_0,
$$
which is symplectic and makes each of
$T(\Sigma \times \{*\})$ and $T(\{*\} \times \AA)$
into symplectic subspaces
which are symplectically orthogonal to each other.
To test whether $\omega_0'$ is symplectic on $\vV_j \times \AA$, we compute
\begin{equation*}
\begin{split}
\frac{1}{2}\omega_0' \wedge \omega_0' &= \psi'\, d\theta \wedge d\sigma \wedge
\left[ \beta\, d\eta + (1 - \beta)\, d\eta_0 \right] \\
&+ \beta \beta' \, d\sigma \wedge (\eta - \eta_0) \wedge \left[
\beta\, d\eta + (1 - \beta)\, d\eta_0 \right].
\end{split}
\end{equation*}
The first term is always nonzero since $d\theta \wedge d\eta$ and
$d\theta \wedge d\eta_0$ are both positive.
The whole expression is therefore nonzero whenever either $\beta'(\sigma)=0$
or $\psi'(\sigma)$ is sufficiently large, and we are free to choose~$\psi$
so that it increases fast in the region where $\beta$ is not constant.
This choice also ensures $\omega_0'(\p_\theta,\p_\sigma) > 0$ everywhere
on $\vV_j \times \AA$.  
\end{proof}

To find a symplectic extension of \eqref{eqn:sympC} over
$\Sigma \times \AA$, choose now a closed $1$-form $\kappa$ on~$\Sigma$
which takes the form
$$
\kappa = c_j\, d\theta
$$
near each boundary component~$\p_j\Sigma$; this is possible due to the
homological condition \eqref{eqn:homologicalcj}.  Then if
$\omega_0'$ denotes the extension of $\widetilde{\Psi}^*\omega_0$ given by
Lemma~\ref{lemma:extendingOmega0}, we extend \eqref{eqn:sympC} as
$$
\omega'_C := C\omega_0' + d\phi \wedge \kappa.
$$
Whenever $C$ is sufficiently large, Lemma~\ref{lemma:extendingOmega0}
implies that this form is also symplectic and restricts positively to the
surfaces $-\Sigma \times \{(\tau,\phi)\}$ and $\{p\} \times \AA$ if
$p \in \Sigma$ lies outside a neighborhood of the boundary.
This implies that it is positive on the pages of~$\boldsymbol{\pi}'$,
as well as on the core 
$\roundcore_\Sigma = ([0,1/2] \times \widehat{B}_0) \cup
(- \Sigma \times \{(0,0)\}) \subset W$
and co-core
$\roundcocore_\Sigma = \{p\} \times \AA \subset W$
(cf.~\S\ref{sec:handles}).

To summarize: we have constructed a smooth cobordism~$W$ with symplectic
form $\omega_C'$ that matches $\omega_C$ near $M = \{0\} \times M$ 
and is positive on the core and co-core and on the pages of the induced 
blown up summed open book
at the other boundary component~$M'$.  An appropriate confoliation $1$-form
$\lambda_0'$ can now be defined on~$M'$ by
\begin{equation}
\label{eqn:lambda0'}
\lambda_0' =
\begin{cases}
\lambda_0 & \text{ on $M \setminus \nN(\iI_0)$},\\
d\phi     & \text{ on $\Sigma_\pm \times S^1$},
\end{cases}
\end{equation}
where we use~$\phi$ to denote the natural $S^1$-coordinate on
$\Sigma_\pm \times S^1$.
The distribution $\xi_0' := \ker\lambda_0'$ is then tangent to the pages on the
glued in region, hence $\omega_C'|_{\xi_0'} > 0$.  It follows that
on any connected component of~$M'$ that does not contain closed pages,
$\xi_0'$ has a perturbation to a contact structure~$\xi'$ that is
supported by $\boldsymbol{\pi}'$ and dominated by~$\omega_C'$.

\begin{remark}
\label{remark:coreIsLarge}
It will be useful later to observe that $\int_{\roundcore_\Sigma} \omega_C'$
is not only positive but can be assumed to be arbitrarily large.  In fact
it \emph{must} in general be large due to the deformation trick used
in the proof of Lemma~\ref{lemma:extendingOmega0}.
\end{remark}

\begin{remark}
\label{remark:noKappa}
If the constants~$c_j$ all vanish, i.e.~$\Omega_0 = 0$ on $\nN(\iI_0)$,
then one can choose the $1$-form $\kappa$ in the above construction
to be identically zero.  This has the useful consequence that for any
$\tau \in [-1,1]$ and any
closed embedded loop $\ell \subset \Sigma$ outside a
neighborhood of~$\p\Sigma$, the torus $\ell \times \{\tau\} \times S^1
\subset \Sigma \times \AA$ is Lagrangian.  More generally, if
$\ell \subset \Sigma$ is any properly embedded compact $1$-dimensional
submanifold transverse to~$\p\Sigma$, then
$$
\int_{\ell \times \{\tau\} \times S^1} \omega'_C = 0.
$$
Indeed, with $\kappa=0$ it is equivalent to show that the 
integral of~$\omega'_0$ vanishes, and using \eqref{eqn:Omega'} we find
$$
\int_{\ell \times \{\tau\} \times S^1} \omega'_0 = 
\int_{\p\ell \times \{\tau\} \times S^1} \eta
$$
since~$\mu$ vanishes on the $S^1$-factor in
$\p\ell \times \{\tau\} \times S^1$.  Since
$\eta$ is $S^1$-invariant on $S^1 \times \AA$, this integral doesn't
depend on the position of any point in $\p\ell \subset \p\Sigma$ but
only on the algebraic count of these points, which is zero, thus
$$
\int_{\p\ell \times \{\tau\} \times S^1} \eta =
\#(\p\ell) \int_{\{(\theta,\tau)\} \times S^1} \eta = 0.
$$
\end{remark}

\begin{remark}
\label{remark:concave}
The reader who is only interested in \emph{strong} cobordisms, or
more generally the case where the negative boundary of the cobordism
is (strongly) concave, may assume throughout this section 
that $\Omega_0 \equiv 0$.
In this case, the symplectic form we have defined on~$W$ is exact near 
$M \subset \p W$ and has a primitive there
which restricts to a constant multiple of the contact form
$\lambda_\epsilon$, so this boundary component
is concave.  The contents of this and the next section
therefore suffice to complete the proofs of Theorems~\ref{thm:roundHandles} 
and~\ref{thm:2handles} respectively if the given $\omega$ 
on $[0,1] \times M$ is exact: indeed, by
\cite{Eliashberg:contactProperties}*{Proposition~3.1},
$\omega$ can then be deformed to make it (strongly)
convex at the positive boundary, so after a further deformation to match the
contact forms, the Liouville flow can be used to attach it smoothly to our model
as long as the constant $C > 0$ is chosen sufficiently large.
The case where $\omega$ is not exact requires the additional deformation
argument of \S\ref{sec:deformation} below.
\end{remark}

\subsection{Modifications for the capping cobordism}
\label{sec:notRound}

The above construction works essentially the same way for the handle
$\Sigma \times \DD$, so we will be content to briefly summarize the differences.
Here we pick binding components
$$
B_0 = \gamma_1 \cup \ldots \cup \gamma_N \subset B
$$
and denote the corresponding solid torus neighborhoods by
$\nN(\gamma_j) = S^1 \times \DD$ with coordinates $(\theta,\rho,\phi)$,
viewing $(\rho,\phi)$ as polar coordinates, and denote the union of these
neighborhoods by~$\nN(B_0)$.  The model symplectic form 
$\omega_C$ on the trivial cobordism $[0,1] \times M$
is again defined via \eqref{eqn:omegaSHS} and \eqref{eqn:omegaC}, with the
difference that since every closed $2$-form on $\nN(\gamma_j)$ is exact,
we can assume (after adding an exact $2$-form) that $\Omega_0$ \emph{vanishes}
on all of these neighborhoods.  The role of $H_{T_j}$ is now played
by a hypersurface
$$
H_{\gamma_j} \subset [0,1] \times \nN(\gamma_j)
$$
parametrized by an embedding
$$
\Psi : S^1 \times \DD \to [0,1] \times M,
$$
thus defining a similar set of coordinates 
$(\theta,\tau,\phi) \in S^1 \times \DD$ on~$H_{\gamma_j}$, where $(\tau,\phi)$ 
are now \emph{polar} coordinates on~$\DD$.  We can again arrange
$H_{\gamma_j}$ to be transverse to the vector field~$X_\theta$, defined
exactly as before, and then
use its flow to parametrize a neighborhood of~$H_{\gamma_j}$ in the 
region~$\overline{\uU}_{\gamma_j}$ that it bounds via a map
\begin{equation*}
\begin{split}
\widetilde{\Psi} : (1 - \delta,1] \times S^1 &\times \DD \hookrightarrow 
[0,1] \times \nN(\gamma_j) : 
(\sigma,\theta,\tau,\phi) \mapsto \varphi^{\sigma-1}_{X_\theta}(\Psi(\theta,\phi,\tau))
\end{split}
\end{equation*}
for which $\widetilde{\Psi}^*\omega_0$ again takes the form
$-d(\sigma\, d\theta) + d\eta$ for some $1$-form~$\eta$ on $S^1 \times \DD$
that satisfies
$$
\eta = \left[\varphi(\tau) + 1 \right] \, d\phi
$$
near $S^1 \times \p \DD$ and $d\theta \wedge d\eta > 0$ everywhere.
Denote the image of $\widetilde{\Psi}$ corresponding to each~$\gamma_j$
by~$\wW_j$, with negatively oriented coordinates $(\sigma,\theta,\tau,\phi) \in
(1-\delta,1] \times S^1 \times \DD$.
Writing the union of the regions $\uU_{\gamma_j}$ as $\uU_{B_0}$, 
the smooth cobordism is then defined by
$$
W = ( ([0,1] \times M) \setminus \uU_{B_0} ) \cup (-\Sigma \times \DD),
$$
where $\Sigma \times \DD$ is glued in by identifying $\vV_j \times \DD$
with $\wW_j$ so that the coordinates match.
This has boundary $\p W = M' \sqcup (-M)$, where
$M = \{0\} \times M$ and
$$
M' = ((\{1\} \times M) \setminus \nN(B_0)) \cup 
(-\Sigma \times S^1),
$$
hence the glued in region $\Sigma \times S^1$ carries the coordinates
$(\sigma,\theta,\phi)$ near its boundary.
Choosing a $1$-form $\eta_0$ on~$\DD$ that matches $\eta$ near $\p\DD$
and satisfies $d\eta_0 > 0$, the interpolation trick \eqref{eqn:Omega'} 
can again be used to deform~$\omega_0$ in a collar neighborhood of 
$\p\Sigma \times \DD$ so that it admits a symplectic extension over the 
rest of $\Sigma \times \DD$ in the form
$\omega_0' = -d\mu + d\eta_0$.  The resulting form
$\omega_C' = C\omega_0' + \Omega_0$ is symplectic everywhere on~$W$
and is also positive on the pages of~$\boldsymbol{\pi}'$ at~$M'$ if~$C$
is sufficiently large, as well as on the core
\begin{equation}
\label{eqn:coreSmooth}
\core_\Sigma = ([0,1/2] \times B_0) \cup
(-\Sigma \times \{0\}) \subset W
\end{equation}
and the co-core
$$
\cocore_\Sigma = \{p\} \times \DD \subset W
$$
for an appropriate choice of $p \in \Sigma$.
The confoliation $1$-form extends smoothly over $\Sigma \times S^1$
as $\lambda_0' = d\phi$, so that $\omega_C'$ is also positive on
$\xi_0' := \ker\lambda_0'$ and 
thus dominates any contact form obtained as a small perturbation.

\subsection{Symplectic deformation in a collar neighborhood}
\label{sec:deformation}

To apply the constructions of the previous sections in proving
Theorems~\ref{thm:2handles} and~\ref{thm:roundHandles} when the
given symplectic form $\omega$ on $[0,1] \times M$ dominating $\xi$ is non-exact, 
we must show that $\omega$ can be deformed away from $\{0\} \times M$ to 
reproduce the model
\begin{equation}
\label{eqn:omegaCagain}
\omega_C = C\, d\left( \varphi(t) \lambda_0 + \lambda\right) +
\Omega_0,
\end{equation}
where $\lambda := \lambda_\epsilon$ 
and $\lambda_0$ are $1$-forms as described at the
beginning of \S\ref{sec:modelCobordism},
$\varphi : [0,1] \to \RR$ is a smooth function with $\varphi' > 0$,
$\varphi(0)=0$ and $\| \varphi \|_{L^\infty}$ small,
$\Omega_0$ is some closed $2$-form on~$M$ in the appropriate cohomology
class, and $C > 0$ is a constant that we can assume to be as large as
necessary.  The following application of a standard Moser deformation
argument (cf.~\cite{NiederkruegerWendl}*{Lemma~2.3}) will be useful.

\begin{lemma}
\label{lemma:Moser}
Suppose $(W,\omega)$ is a symplectic $4$-manifold, $M$ is a closed oriented
$3$-manifold with an embedding $\Phi : M \hookrightarrow W$ and
$\lambda$ is a $1$-form on~$M$ that satisfies 
$\lambda \wedge \Phi^*\omega > 0$.  Then for sufficiently small
$\epsilon > 0$, $\Phi$ extends to an embedding
$$
\widetilde{\Phi} : (-\epsilon,\epsilon) \times M \hookrightarrow W
$$
such that $\widetilde{\Phi}(0,\cdot) = \Phi$ and 
$\widetilde{\Phi}^*\omega = d(t\lambda) + \Phi^*\omega$.
\end{lemma}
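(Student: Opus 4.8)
The plan is to run the standard relative Moser stability argument on a collar neighborhood of $\Phi(M)$; the one delicate point is to choose the collar so that the target model $\omega_1 := d(t\lambda) + \Phi^*\omega$ (where $\Phi^*\omega$ also denotes its pullback to $(-\epsilon,\epsilon)\times M$ under the projection to~$M$) agrees with~$\omega$ along $\Phi(M)$ as a $2$-form on the full ambient tangent bundle, not merely upon restriction to $T\Phi(M)$.

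First I would produce a vector field~$Y$ defined along $\Phi(M)$, transverse to $\Phi(M)$, with $\iota_Y\omega|_{\Phi(M)} = \lambda$ (identifying forms on $\Phi(M)$ with forms on~$M$ via~$\Phi$). Nondegeneracy of~$\omega$ makes the bundle map $T_pW \to T^*_p\Phi(M)$, $v \mapsto (\iota_v\omega)|_{T_p\Phi(M)}$, surjective with $1$-dimensional kernel $(T_p\Phi(M))^{\omega_p}$, so a smooth solution~$Y$ exists (pick a metric complement to the kernel and solve there). The hypothesis enters precisely here: $(T_p\Phi(M))^{\omega_p}$ equals $\Phi_*(\ker\Phi^*\omega)$ and is therefore tangent to $\Phi(M)$, so the set of solutions at~$p$ is a coset of the tangent subspace $\Phi_*(\ker\Phi^*\omega)$; hence either all solutions are transverse to $\Phi(M)$ or none are, and a particular solution is tangent iff~$\lambda$ annihilates $\ker\Phi^*\omega$, i.e.\ iff $\lambda\wedge\Phi^*\omega = 0$ --- which is excluded. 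I would then use a tubular-neighborhood construction with transverse direction~$Y$ to obtain, for small $\epsilon_0 > 0$, an embedding $\Psi : (-\epsilon_0,\epsilon_0)\times M \hookrightarrow W$ with $\Psi(0,\cdot) = \Phi$ and $\p_t\Psi|_{t=0} = Y$; then $\omega_0 := \Psi^*\omega$ satisfies $\omega_0 = \Phi^*\omega + dt\wedge\lambda = \omega_1$ along $\{0\}\times M$.

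Since $\lambda\wedge\Phi^*\omega$ is a positive volume form on the \emph{closed} manifold~$M$, both $\omega_0$ and $\omega_1$ are symplectic on $(-\epsilon,\epsilon)\times M$ once~$\epsilon$ is small, and they agree along $\{0\}\times M$. I would then write $\omega_1 - \omega_0 = d\sigma$ with~$\sigma$ vanishing along $\{0\}\times M$ (relative Poincar\'e lemma), observe that after shrinking~$\epsilon$ the path $\omega_s := (1-s)\omega_0 + s\omega_1$ is symplectic for all $s \in [0,1]$ (it coincides with the symplectic form $\omega_1$ along the compact submanifold $\{0\}\times M$), define $X_s$ by $\iota_{X_s}\omega_s = -\sigma$ (so $X_s$ vanishes along $\{0\}\times M$), and integrate its time-dependent flow $\psi^s$ up to $s=1$ on a neighborhood of $\{0\}\times M$, fixing $\{0\}\times M$ pointwise. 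The standard computation $\tfrac{d}{ds}(\psi^s)^*\omega_s = (\psi^s)^*\bigl(d\,\iota_{X_s}\omega_s + d\sigma\bigr) = 0$ gives $(\psi^1)^*\omega_1 = \omega_0$, and then $\widetilde{\Phi} := \Psi\circ(\psi^1)^{-1}$, restricted to a sufficiently small $(-\epsilon,\epsilon)\times M$, is an embedding with $\widetilde{\Phi}(0,\cdot) = \Phi$ and $\widetilde{\Phi}^*\omega = \omega_1 = d(t\lambda) + \Phi^*\omega$, as required.

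I expect the main --- really the only non-routine --- obstacle to be the first step: arranging the transverse direction of the collar so that~$\omega$ and the model agree along $\Phi(M)$ and not merely on $T\Phi(M)$. Everything after that is the textbook relative Moser argument, and it is exactly the condition $\lambda\wedge\Phi^*\omega > 0$ (rather than a contact-type relation between~$\lambda$ and $\Phi^*\omega$) that supplies the transversality needed to carry the first step out.
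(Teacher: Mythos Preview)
Your argument is correct and is precisely the ``standard Moser deformation argument'' the paper alludes to; the paper does not give its own proof but only cites \cite{NiederkruegerWendl}*{Lemma~2.3}. Your identification of the key step---choosing the transverse field~$Y$ so that $\iota_Y\omega|_{T\Phi(M)} = \lambda$, with the condition $\lambda\wedge\Phi^*\omega > 0$ supplying transversality---is exactly right, and the rest is indeed routine.
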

Observe that if $M \subset W$ is an oriented hypersurface in a symplectic 
$4$-manifold $(W,\omega)$ with a positive contact structure~$\xi$, then
$\omega$ dominates~$\xi$ if and only if it satisfies
$$
\lambda \wedge \omega|_{TM} > 0
$$
for every contact form~$\lambda$ on $(M,\xi)$.  Using the
obvious variants of Lemma~\ref{lemma:Moser} when the hypersurface is a
positive or negative boundary component of~$W$, we obtain the following
useful consequence:

\begin{lemma}
\label{lemma:gluing}
Suppose $(M,\xi)$ is a closed contact $3$-manifold and
$((-1,0] \times M, \omega_-)$ and $([0,1) \times M, \omega_+)$
are two symplectic manifolds such that the restrictions of
$\omega_-$ and $\omega_+$ to $\{0\} \times M$ define the same
$2$-form~$\Omega$ on~$M$, with $\Omega|_\xi > 0$.  Then for
any small $\epsilon > 0$,
$(-1,1) \times M$ admits a symplectic form which matches
$\omega_+$ on $[\epsilon,1) \times M$ and $\omega_-$ on
$(-1,-\epsilon] \times M$.
\end{lemma}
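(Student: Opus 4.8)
The plan is to exploit the fact that the Moser normal form supplied by Lemma~\ref{lemma:Moser} depends only on the fixed data $(\lambda,\Omega)$ and is defined on a \emph{two-sided} collar, so that once both given forms are normalized near $\{0\}\times M$ they literally coincide there, and the gluing becomes trivial. First I would fix a contact form $\lambda$ for $(M,\xi)$. By the observation preceding the lemma, the hypothesis $\Omega|_\xi>0$ means $\Omega$ dominates $\xi$, hence $\lambda\wedge\Omega>0$, and so $\omega_0:=d(t\lambda)+\Omega$, regarded as a $2$-form on $\RR\times M$, restricts to a symplectic form on a neighborhood $(-\epsilon_0,\epsilon_0)\times M$ of $\{0\}\times M$.

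Next I would apply the boundary variants of Lemma~\ref{lemma:Moser} to the hypersurface $\{0\}\times M$ inside $([0,1)\times M,\omega_+)$ and inside $((-1,0]\times M,\omega_-)$ respectively. This produces, for some $\delta_\pm>0$, embeddings $\widetilde\Phi_+:[0,\delta_+)\times M\hookrightarrow[0,1)\times M$ and $\widetilde\Phi_-:(-\delta_-,0]\times M\hookrightarrow(-1,0]\times M$, each onto an open neighborhood of $\{0\}\times M$, restricting to the identity on $\{0\}\times M$ and satisfying $\widetilde\Phi_\pm^*\omega_\pm=\omega_0$. Since $M$ is compact and the $\widetilde\Phi_\pm$ fix $\{0\}\times M$ pointwise, after shrinking $\delta_\pm$ I may assume their images lie inside $(-\epsilon,\epsilon)\times M$. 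Writing $\delta:=\min(\delta_+,\delta_-,\epsilon_0)$, the form $\omega_0$ is symplectic on the full two-sided collar $(-\delta,\delta)\times M$, and $\widetilde\Phi_\mp$ identifies its $\{\mp t<0\}$-half symplectically with an open subset of $((-1,0)\times M,\omega_-)$, resp.\ $((0,1)\times M,\omega_+)$. I would then glue the three symplectic manifolds $\big((-1,-\tfrac{\delta}{2}]\times M,\omega_-\big)$, $\big((-\delta,\delta)\times M,\omega_0\big)$, $\big([\tfrac{\delta}{2},1)\times M,\omega_+\big)$ along these identifications; the gluing maps being symplectomorphisms, the result carries a well-defined symplectic form, its underlying manifold is diffeomorphic to $(-1,1)\times M$, and because the recoordinatizations only touch the region $(-\epsilon,\epsilon)\times M$ this diffeomorphism can be taken to be the identity on $(-1,-\epsilon]\times M$ and on $[\epsilon,1)\times M$. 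Transporting the glued form by it gives the asserted symplectic form on $(-1,1)\times M$.

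The only genuinely delicate point is the last piece of bookkeeping: checking that, after recoordinatizing near $\{0\}\times M$ on both sides, the glued smooth manifold is honestly $(-1,1)\times M$ with $\omega_\pm$ surviving unchanged for $|t|\ge\epsilon$. This is painless precisely because the step one would normally fear---interpolating between $\omega_+$ and $\omega_-$ across $t=0$, where a convex combination of symplectic forms need not be symplectic---never arises: Lemma~\ref{lemma:Moser} already pins both forms to the \emph{same} model $\omega_0=d(t\lambda)+\Omega$, which lives on a two-sided neighborhood of $\{0\}\times M$. The hypothesis that $\omega_-$ and $\omega_+$ restrict to the same $\Omega$ on $\{0\}\times M$ is exactly what forces the two copies of $\omega_0$ to agree, and without it no such gluing would be possible.
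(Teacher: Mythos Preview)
Your proof is correct and follows exactly the approach indicated in the paper: the paper simply remarks that the boundary variants of Lemma~\ref{lemma:Moser} put both $\omega_+$ and $\omega_-$ into the common normal form $d(t\lambda)+\Omega$ near $\{0\}\times M$, after which the gluing is immediate. You have faithfully (and more carefully) carried out that sketch.
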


\begin{prop}
\label{prop:deformation}
Suppose $(M,\xi)$ is any closed contact $3$-manifold with contact
form~$\lambda$, $\lambda_0$ is a $1$-form on~$M$ satisfying
$\lambda_0 \wedge d\lambda > 0$,
$\omega$ is a symplectic form on $[0,1] \times M$ with
$\omega|_{\xi} > 0$, and $\Omega_0$ is a closed $2$-form on~$M$
with $[\Omega_0] = [\omega|_{TM}] \in H^2_\dR(M)$.
Then for any $\delta \in (0,1)$ and sufficiently large $C > 0$, 
there exists a symplectic form~$\omega'$
on $[0,1] \times M$ that matches~$\omega$ on a neighborhood
of $\{0\} \times M$ and takes the form \eqref{eqn:omegaCagain} on
$[\delta,1] \times M$.
\end{prop}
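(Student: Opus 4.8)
The plan is to leave $\omega$ unchanged near $\{0\}\times M$, to install the model $\omega_C$ of \eqref{eqn:omegaCagain} on $[\delta,1]\times M$, and to interpolate between the two on an intermediate slab $[a,\delta]\times M$ with $0<a<\delta$, splicing the pieces together by Lemma~\ref{lemma:gluing}. First note that $\omega_C = C\,d(\varphi(t)\lambda_0+\lambda)+\Omega_0$ is symplectic on $[\delta,1]\times M$ once $\|\varphi\|_{L^\infty}$ is small and $C$ is large: the top order term of $\omega_C\wedge\omega_C$ is $2C^2\varphi'(t)\,dt\wedge\lambda_0\wedge d\lambda$, a positive volume form by the hypotheses $\lambda_0\wedge d\lambda>0$ and $\varphi'>0$, while every remaining term is of strictly lower order in $C$ or carries a factor of $\|\varphi\|_{L^\infty}$; likewise $\omega_C|_\xi = C(\varphi(t)\,d\lambda_0+d\lambda)|_\xi+\Omega_0|_\xi$ is positive for $C$ large and $\|\varphi\|_{L^\infty}$ small, since $d\lambda|_\xi>0$. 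Fix $a\in(0,\delta)$, set $\Omega_1:=\omega|_{T(\{a\}\times M)}$, and observe that $\lambda\wedge\Omega_1>0$ because $\omega$ dominates $\xi$, and that $\Omega_1-\Omega_0=d\beta$ for some $1$-form $\beta$ on $M$ because $[\Omega_1]=[\omega|_{TM}]=[\Omega_0]\in H^2_\dR(M)$. It then suffices to produce a symplectic form $\omega'$ on $[a,1]\times M$ that equals $\omega_C$ on $[\delta,1]\times M$ and restricts to $\Omega_1$ on $\{a\}\times M$: since $\omega$ also restricts to $\Omega_1$ there and $\Omega_1|_\xi>0$, Lemma~\ref{lemma:gluing} applied in a collar of $\{a\}\times M$ splices $\omega|_{[0,a]\times M}$ onto $\omega'$ and yields the asserted form on $[0,1]\times M$. (One could instead first put $\omega$ into the Moser normal form $d((t-a)\lambda)+\Omega_1$ near $\{a\}\times M$ via Lemma~\ref{lemma:Moser} and match smoothly, but the gluing lemma makes this unnecessary.)

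On $[a,\delta]\times M$ I would take the closed $2$-form $\omega' = dt\wedge\nu(t)+\mu(t)$, where $\nu(t)=P'(t)\lambda_0+Q'(t)\lambda-\rho'(t)\beta$ and $\mu(t)=P(t)\,d\lambda_0+Q(t)\,d\lambda+(1-\rho(t))\Omega_1+\rho(t)\Omega_0$, with functions $P,Q\colon[a,\delta]\to\RR$ and $\rho\colon[a,\delta]\to[0,1]$ to be chosen. Each $\mu(t)$ is closed on $M$ and $\partial_t\mu(t)=d\nu(t)$ (using $\Omega_1-\Omega_0=d\beta$), so $\omega'$ is closed; since $\mu(t)\wedge\mu(t)=0$ on the $3$-manifold $M$ we get $\omega'\wedge\omega'=2\,dt\wedge\nu(t)\wedge\mu(t)$, so $\omega'$ is symplectic exactly when $\nu(t)\wedge\mu(t)$ is a positive volume form on $M$ for every $t$. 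I would impose $P(a)=Q(a)=\rho(a)=0$ with $P'(a)=\rho'(a)=0$ and $Q'(a)=1$, so that $\omega'|_{\{a\}\times M}=\Omega_1$ and $\nu(a)\wedge\mu(a)=\lambda\wedge\Omega_1>0$; and, for $t$ near $\delta$, $\rho\equiv1$, $Q\equiv C$ and $P\equiv C\varphi$, so that $\omega'=\omega_C$ there and hence extends smoothly by $\omega_C$ over $[\delta,1]$.

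It remains to choose $P,Q,\rho$ so that $\nu\wedge\mu>0$ on all of $[a,\delta]\times M$, and this is where the freedom to take $C$ large is essential---the obstacle being that $\Omega_0$ is not assumed positive on $\xi$, so one cannot interpolate directly between the cross-sections $\Omega_1$ and $\Omega_0$ without first routing through the ``large'' cross-section $\approx C\,d\lambda$. Concretely I would increase $Q$ monotonically from $0$ to $C$ in three phases. Phase (i): ramp $Q$ from $0$ up to a large value, keeping $P=\rho=0$; here $\nu\wedge\mu=Q'(Q\,\lambda\wedge d\lambda+\lambda\wedge\Omega_1)>0$ since $Q\ge0$, $Q'>0$, $\lambda\wedge d\lambda>0$ and $\lambda\wedge\Omega_1>0$. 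Phase (ii): ramp $\rho$ from $0$ to $1$ while $Q$ keeps increasing; the dominant term of $\nu\wedge\mu$ is $QQ'\,\lambda\wedge d\lambda$, which on the compact manifold $M$ beats all remaining terms---each bounded by a fixed multiple of $Q'$, $Q\rho'$ or $\rho'$, independent of $C$---provided $Q$ is already large and $Q'$ is kept large relative to $\rho'$, which can be arranged since $Q$ must increase by order $C$ while $\rho$ increases by $1$. Phase (iii): with $\rho\equiv1$, first let $Q$ reach $C$ with $P=0$ (dominant term $QQ'\,\lambda\wedge d\lambda$ again, $Q$ already large), then ramp $P$ from $0$ to $C\varphi(\delta)$ with $Q\equiv C$; here the dominant term is $CP'\,\lambda_0\wedge d\lambda>0$ by the hypothesis $\lambda_0\wedge d\lambda>0$, and it beats the remaining terms (of order $CP'\|\varphi\|_{L^\infty}$ and $P'$) once $C$ is large and $\|\varphi\|_{L^\infty}$ is small. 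Taking $C$ large enough---and ramping $Q$ quickly enough in phase (ii)---makes $\nu\wedge\mu>0$ throughout (the same estimates also give $\omega'|_\xi>0$). This produces the required $\omega'$ on $[a,\delta]\times M$, and the splicing step of the first paragraph completes the proof.
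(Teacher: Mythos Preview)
Your argument is correct and shares the paper's overall plan---interpolate using the freedom to take $C$ large, then splice via Lemma~\ref{lemma:gluing}---but the execution is genuinely different. The paper first Moser-normalizes $\omega$ to $d(t\lambda)+\Omega$ near $\{0\}\times M$, then uses a Liouville-type flow to deform $\omega_C$ near the bottom so that it takes the simplified form $d(tC\lambda)+C\,d\lambda+\Omega_0$, and finally interpolates between these two normal forms with a \emph{single} fast-growing function via $\omega'=d(f(t)\lambda)+\Omega+d(\beta(t)\eta)$. You skip the Liouville-flow simplification entirely and interpolate directly toward $\omega_C$ itself, using three functions $P,Q,\rho$ in successive phases; this is more explicit and avoids any coordinate change, at the price of a longer case analysis. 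Both routes exploit the same mechanism (the dominant term $\lambda\wedge d\lambda$ or $\lambda_0\wedge d\lambda$ once the coefficient is large), so neither is deeper than the other.

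One small point to tighten: in your phase~(iii), if $Q$ flattens out to the constant~$C$ strictly \emph{before} $P$ begins to ramp, then at the transition $\nu=P'\lambda_0+Q'\lambda$ vanishes and $\omega'$ degenerates there. This is repaired by letting the $P$-ramp overlap the tail of the $Q$-ramp so that $P'+Q'>0$ throughout; the estimate for that overlap region is simply the sum of the two dominant terms $QQ'\,\lambda\wedge d\lambda$ and $CP'\,\lambda_0\wedge d\lambda$, both positive.
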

\begin{proof}
By Lemma~\ref{lemma:Moser} we can assume without loss of generality
that~$\omega$ has the form
$$
\omega = d(t \lambda) + \Omega,
$$
near $\{0\} \times M$, where $\Omega$ is the closed $2$-form on~$M$ 
defined as the restriction of $\omega$ to $\{0\} \times M$.  

The proof now proceeds in two steps, of which the first is to put the 
symplectic structure $\omega_C$ of
\eqref{eqn:omegaCagain} into a slightly simpler form via a 
coordinate change near $\{0\} \times M$.  Define the $1$-form
$$
\Lambda_0 = \varphi(t)\, \lambda_0 + \lambda
$$
on $[0,1] \times M$
and write $\omega_0 := d\Lambda_0$, 
so $\omega_C = C \omega_0 + \Omega_0$.
Let $V$ denote the vector field that is
$\omega_C$-dual to~$C\Lambda_0$, i.e.~$\omega_C(V,\cdot) = C\Lambda_0$.
For $C$ sufficiently large, $V$ is then a small perturbation of the
vector field that is $\omega_0$-dual to~$\Lambda_0$, which is a Liouville
(with respect to~$\omega_0$) vector field positively transverse to
$\{0\} \times M$ since $\Lambda_0|_{\{0\} \times M} = \lambda$ is contact.
Hence we may assume~$V$ is also positively transverse to $\{0\} \times M$
and use its flow $\varphi_V^t$ to define an embedding
$$
\psi : [0,\epsilon) \times M \hookrightarrow [0,1] \times M :
(t,m) \mapsto \varphi_V^t(m)
$$
for $\epsilon > 0$ sufficiently small.  If $X_\lambda$ denotes the Reeb
vector field determined by~$\lambda$, along $\{0\} \times M$ we then have
$$
\iota_{\p_t}(\psi^*\omega_C) = C\lambda
$$
and
$$
\psi^*\omega_C|_{T(\{0\} \times M)} = C\,d\lambda + \Omega_0.
$$
Hence $\psi^*\omega_C$ matches the symplectic form
$d(t\, C\lambda) + C\, d\lambda + \Omega_0$
pointwise at $\{0\} \times M$, and another Moser deformation argument
thus allows us to isotop the embedding~$\psi$ so that
$\psi^*\omega_C$ takes this form on some neighborhood of $\{0\} \times M$.
Equivalently, this means $\omega_C$ admits a deformation to a new
symplectic form $\omega_C'$ which takes the form
\begin{equation}
\label{eqn:whatever}
\omega_C' = d(t C\lambda) + C\, d\lambda + \Omega_0
\end{equation}
on an arbitrarily small neighborhood of $\{0\} \times M$ and matches
the original $\omega_C$ outside a slightly larger neighborhood.

For step two, we show that the given~$\omega$ can be deformed outside a
small neighborhood of $\{0\} \times M$ to a new symplectic form~$\omega'$
that matches \eqref{eqn:whatever} outside a slightly larger neighborhood.
Indeed, choose a constant $C' > 0$ large enough so that
$$
(C'\,d\lambda + \Omega_0)|_{\xi} > 0,
$$
and since $\Omega$ and $\Omega_0$ are cohomologous by assumption,
choose a $1$-form $\eta$ on~$M$ such that 
$C'\,d\lambda + \Omega_0 - \Omega = d\eta$.
For some $\delta > 0$ small, choose a cutoff function
$\beta(t)$ that equals~$0$ near $t=0$ and~$1$ near $t=\delta$, 
and define
$$
\omega' = d \left( f(t)\,\lambda \right) + \Omega
+ d\left( \beta(t)\,\eta \right),
$$
with $f : [0,\delta] \to [0,\infty)$ a smooth function satisfying
\begin{itemize}
\item $f(t) = t$ near $t=0$,
\item $f' > 0$,
\item $f(\delta) + C' = C(\delta + 1)$.
\end{itemize}
If $f$ is chosen to increase sufficiently fast, then $\omega'$ is
symplectic, and this can always be arranged if~$C > 0$ is made
sufficiently large.  This depends in particular on the fact that
the $2$-forms $\Omega$ and $C\, d\lambda + \Omega_0$ are both positive
on~$\xi$.  The restrictions of~$\omega'$
and $\omega_C'$ to the hypersurface $\{\delta\} \times M$ now match,
thus the two can be glued together smoothly by Lemma~\ref{lemma:gluing}.
\end{proof}

Combining Proposition~\ref{prop:deformation} with the cobordism
constructions of \S\ref{sec:modelCobordism} and \S\ref{sec:notRound}
completes the proofs of Theorems~\ref{thm:2handles} and~\ref{thm:roundHandles}.

\subsection{Cohomology}
\label{sec:cohomology}

We now prove Theorems~\ref{thm:2handleCohomology} 
and~\ref{thm:roundHandleCohomology} by characterizing the situations
in which~$\omega$ can be made exact on~$W$ or on~$M'_{\text{convex}}$.

Assume first that $(W,\omega)$ is a
$\Sigma$-capping cobordism $([0,1] \times M) \cup \handle_\Sigma$,
with $\handle_\Sigma = -\Sigma \times \DD$
attached along a neighborhood $\nN(B_0)$
of $B_0 = \gamma_1 \cup \ldots \cup \gamma_N$.  Write $\p W = M' \sqcup
(-M)$ and
$$
\Omega := \omega|_{TM},\qquad
\Omega' := \omega|_{TM'}.
$$
Due to \S\ref{sec:deformation}, we may assume without loss of 
generality that~$\Omega$ has the form
\begin{equation}
\label{eqn:Gaudy}
\Omega = C\, d\lambda + \Omega_0
\end{equation}
where $C > 0$ is arbitrarily large, $\lambda$ is the usual contact
form on~$M$ and $\Omega_0$ vanishes on~$\nN(B_0)$.
By Remark~\ref{remark:coreIsLarge}, we can also assume in the following
that $\int_{\core_\Sigma} \omega$ is arbitrarily large.

The decomposition of~$W$ into $[0,1] \times M$ and $\handle_\Sigma$,
which intersect at $\nN(B_0) \subset \{1\} \times M$, gives rise to
the Mayer-Vietoris sequence,
$$
\ldots \to H_2(\nN(B_0)) \to 
H_2(M) \oplus H_2(\handle_\Sigma) \to H_2(W) \to H_1(\nN(B_0))
     \to H_1(M) \oplus H_1(\handle_\Sigma) \to \ldots
$$
in which $H_2(\nN(B_0)) =  H_2(\handle_\Sigma)=0$, 
$H_1(\handle_\Sigma) = H_1(\Sigma)$ and
$H_1(\nN(B_0)) = H_1(B_0) = \ZZ^N$.  Thus there is an isomorphism
\begin{equation}
\begin{split}
\label{eqn:isoH2W}
H_2(W) \cong \im \big(H_2(M) \to & H_2(W)\big) \\ &\oplus 
\ker \big( H_1(\nN(B_0)) \to H_1(M) \oplus H_1(\handle_\Sigma) \big),
\end{split}
\end{equation}
in which the first summand is an isomorphic copy of $H_2(M)$.
Denote by $\iota^M : \nN(B_0) \hookrightarrow M$ and 
$\iota^\Sigma : \nN(B_0) \hookrightarrow \handle_\Sigma$ the natural inclusions.
Then $\iota^\Sigma_*([\gamma_j]) = [\p_j\Sigma] \in H_1(\Sigma) = 
H_1(\handle_\Sigma)$, so since~$\Sigma$ is connected, $\ker\iota^\Sigma_*$
is isomorphic to~$\ZZ$ and is generated by
$[\gamma_1] + \ldots + [\gamma_N]$.
It follows that the second summand in \eqref{eqn:isoH2W} consists of
all integer multiples of $[\gamma_1] + \ldots + [\gamma_N]$ 
which are also in $\ker \iota^M_*$, i.e.~it is isomorphic 
to~$\ZZ$ if $[\gamma_1] + \ldots + [\gamma_N]$ is torsion in $H_1(M)$, and is
otherwise trivial.  In the former case, let $k_0 \in \NN$ be the smallest
number for which $k_0([\gamma_1] + \ldots + [\gamma_N]) = 0 \in H_1(M)$,
and construct a cycle $A_{k_0} \in H_2(W)$ in the form
\begin{equation}
\label{eqn:Ak0}
A_{k_0} = C_M + k_0 [\core_\Sigma],
\end{equation}
where $C_M$  is any $2$-chain in $\{0\} \times M$ with
$\p C_M = k_0([\gamma_1] + \ldots + [\gamma_N])$ and $\core_\Sigma \subset W$
is the core \eqref{eqn:coreSmooth}.
The isomorphism \eqref{eqn:isoH2W} implies
that everything in $H_2(W)$ is an element of $H_2(M)$ plus an integer
multiple of \eqref{eqn:Ak0}.

Let $h$ denote a real $1$-cycle in $M \setminus \nN(B_0)$ such that
$[h] = \PD([\Omega]) \in H_1(M;\RR)$; note that this is always possible
since $\Omega$ is necessarily exact on $\nN(B_0)$.  The product
$[0,1] \times h$ then represents a relative homology class
in $H_2(W,\p W ; \RR)$.

\begin{prop}
There is a number $c > 0$ such that $\PD([\omega]) = [0,1] \times [h]
+ c [\cocore_\Sigma] \in H_2(W,\p W ; \RR)$.
\end{prop}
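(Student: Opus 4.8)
The plan is to read off the identity from Poincar\'e--Lefschetz duality together with the fact that the intersection pairing $H_2(W,\p W;\RR) \times H_2(W;\RR) \to \RR$ is perfect. By definition $\PD([\omega])$ is the unique class with $\PD([\omega]) \cdot S = \int_S \omega$ for all $S \in H_2(W;\RR)$, so it is enough to verify that $[0,1]\times[h] + c\,[\cocore_\Sigma]$ has the same intersection number with every element of a spanning set of $H_2(W;\RR)$. The Mayer--Vietoris isomorphism \eqref{eqn:isoH2W} provides such a spanning set: the image of $H_2(M;\RR)$, together with --- when $[\gamma_1]+\ldots+[\gamma_N]$ is torsion of order $k_0$ in $H_1(M)$ --- the single extra generator $A_{k_0}$ of \eqref{eqn:Ak0}.

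First I would dispatch the classes from $\im\big(H_2(M;\RR) \to H_2(W;\RR)\big)$. Such a class is represented by a $2$-cycle $\{t_0\}\times Z$ with $Z \subset M$ and $t_0 \in (0,1)$ interior; since $\omega$ is closed and the slices $\{t\}\times M$ are mutually cohomologous, $\int_{\{t_0\}\times Z}\omega = \int_Z\Omega$. On the other side $[0,1]\times h$ meets $\{t_0\}\times Z$ in $\{t_0\}\times(h\cap Z)$, so the intersection number is $h\cdot Z$ computed in $M$, which equals $\int_Z\Omega$ because $[h]$ is Poincar\'e dual to $[\Omega]$ (fixing orientation conventions so that this comes out with a $+$); and $\cocore_\Sigma = \{p\}\times\DD$ sits in the handle, disjoint from $Z$, contributing nothing. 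Thus the asserted formula holds on $\im\big(H_2(M;\RR)\big)$ for \emph{every} value of $c$. In particular, if $[\gamma_1]+\ldots+[\gamma_N]$ is not torsion, this already exhausts $H_2(W;\RR)$, which forces $[\cocore_\Sigma]=0$ in $H_2(W,\p W;\RR)$ and makes the proposition hold with, say, $c=1$.

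It remains to pin down $c$ by testing against $A_{k_0} = C_M + k_0[\core_\Sigma]$. As $C_M$ lies in $\{0\}\times M$ and hence is disjoint from the handle, $[\cocore_\Sigma]\cdot A_{k_0} = k_0\,(\core_\Sigma\bullet\cocore_\Sigma) = k_0$, so $c$ is forced to equal $\tfrac{1}{k_0}\big(\int_{A_{k_0}}\omega - ([0,1]\times[h])\cdot A_{k_0}\big)$, and the entire content of the proposition is that this number is positive. To evaluate it cleanly I would, before applying the deformation of \S\ref{sec:deformation}, choose the auxiliary $2$-form $\Omega_0$ in \eqref{eqn:Gaudy} to be a Thom form supported in a small tubular neighbourhood of $h$ --- this is cohomologous to any admissible choice, vanishes on $\nN(B_0)$ since $h\subset M\setminus\nN(B_0)$, and keeps $\omega$ symplectic once $C$ is large. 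Then $\int_{C_M}\Omega_0 = h\cdot C_M$ exactly (legitimate since $\p C_M = k_0 B_0$ avoids $h$), so by Stokes $\int_{C_M}\Omega = C\int_{C_M}d\lambda + \int_{C_M}\Omega_0 = C k_0\int_{B_0}\lambda + h\cdot C_M$. Meanwhile $[0,1]\times h$ misses $[0,1/2]\times B_0$ and the handle, hence also $k_0[\core_\Sigma]$, so $([0,1]\times[h])\cdot A_{k_0} = ([0,1]\times[h])\cdot C_M = h\cdot C_M$; the two intersection terms cancel, and with $\int_{A_{k_0}}\omega = \int_{C_M}\Omega + k_0\int_{\core_\Sigma}\omega$ this leaves
\[
c \;=\; C\int_{B_0}\lambda \;+\; \int_{\core_\Sigma}\omega .
\]
Both summands are positive: each binding circle $\gamma_j$ is positively transverse to $\xi$, so $\int_{\gamma_j}\lambda>0$, and $\omega$ is positive on $\core_\Sigma$ by Theorem~\ref{thm:2handles}. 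Hence $c>0$.

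The main obstacle I anticipate is pure sign bookkeeping: checking that the orientation conventions for $\PD$, for the intersection pairing, and for the oriented submanifolds $\core_\Sigma$, $\cocore_\Sigma$ are mutually consistent, so that the two copies of $h\cdot C_M$ genuinely cancel as above. Even if a sign slips one still obtains $c = C\int_{B_0}\lambda + \tfrac{2}{k_0}(h\cdot C_M) + \int_{\core_\Sigma}\omega$, which is positive for $C$ sufficiently large, so the conclusion is robust. Beyond that, the argument is just the Mayer--Vietoris sequence already displayed, Stokes' theorem, and the positivity built into Theorem~\ref{thm:2handles}.
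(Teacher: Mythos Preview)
Your argument is correct and follows the same overall strategy as the paper: verify the identity by pairing both sides against a Mayer--Vietoris spanning set for $H_2(W;\RR)$, namely the image of $H_2(M)$ together with the extra generator $A_{k_0}$ when it exists. The difference is in how you establish positivity of~$c$. The paper leaves $\Omega_0$ arbitrary, obtains
\[
c = \int_{\core_\Sigma}\omega + \frac{1}{k_0}\left(\int_{C_M}\Omega - C_M\bullet[h]\right),
\]
and then invokes Remark~\ref{remark:coreIsLarge} to say $\int_{\core_\Sigma}\omega$ can be made arbitrarily large, absorbing the unknown correction term. You instead specialize $\Omega_0$ to a Thom form for~$h$ (the same trick the paper uses later for the round handle), which makes the correction term equal $Ck_0\int_{B_0}\lambda$ exactly, yielding the explicit and manifestly positive formula $c = C\int_{B_0}\lambda + \int_{\core_\Sigma}\omega$. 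Your version is slightly sharper, since it shows $c>0$ without needing to enlarge $\int_{\core_\Sigma}\omega$; the paper's version is more agnostic about the choice of~$\Omega_0$.
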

\begin{proof}
It suffices to show that for every $A \in H_2(W)$, the evaluation of~$\omega$
on~$A$ matches the intersection product
\begin{equation}
\label{eqn:intersection}
\int_A \omega = A \bullet \left( [0,1] \times [h] + c [\cocore_\Sigma] \right).
\end{equation}
For any $A \in \im(H_2(M) \to H_2(W))$ this is immediately clear since
$$
\int_A \omega = \int_A \Omega = A \bullet [h],
$$
where the latter is the intersection product in~$M$, and~$A$ does not
intersect anything in the handle.  By \eqref{eqn:isoH2W}, either the image of
$H_2(M) \to H_2(W)$ is the entirety of $H_2(W)$ or there is one more generator
$A_{k_0} = C_M + k_0[\core_\Sigma]$.  For the latter we have
$$
\int_{A_{k_0}} \omega = \int_{C_M} \Omega + k_0 \int_{\core_\Sigma} \omega
$$
and
$$
A_{k_0} \bullet \left( [0,1] \times [h] + c [\cocore_\Sigma] \right) =
C_M \bullet [h] + k_0 c,
$$
so \eqref{eqn:intersection} is satisfied if and only if
$$
c = \int_{\core_\Sigma} \omega + \frac{1}{k_0} \left( \int_{C_M} \Omega - 
C_M \bullet [h] \right).
$$
This is positive without loss of generality since
$\int_{\core_\Sigma} \omega$ was assumed to be arbitrarily large.
\end{proof}

The above argument also shows that if $\{0\} \times M \subset (W,\omega)$ 
is concave, then~$\omega$ can never be exact if
$[\gamma_1] + \ldots + [\gamma_N] \in H_1(M)$ is torsion, even without
assuming $\int_{\core_\Sigma} \omega$ to be arbitrarily large.  Indeed, 
in this case we have $\Omega = d\lambda$ for a contact form~$\lambda$ on
$(M,\xi)$, and $[h] = 0$, hence
$$
\int_{A_{k_0}} \omega = \int_{C_M} d\lambda + k_0 \int_{\core_\Sigma} \omega
= k_0 \sum_{j=1}^N \int_{\gamma_j} \lambda + k_0 \int_{\core_\Sigma} \omega > 0,
$$
and
$$
c = \int_{\core_\Sigma} \omega + \sum_{j=1}^N \int_{\gamma_j} \lambda > 0.
$$
On the other hand if $[\gamma_1] + \ldots + [\gamma_N] \in H_1(M)$ is
not torsion, then $H_2(M)$ generates everything in $H_2(W)$, so
$\int_A \omega$ always vanishes since~$\Omega$ is exact.  This proves
the first half of Theorem~\ref{thm:2handleCohomology}.

We also conclude from the above that if $\{0\} \times M \subset (W,\omega)$ is
concave, then there is a constant $c > 0$ such that 
$$
\PD([\Omega']) = c [\p \cocore_\Sigma] \in H_1(M' ; \RR),
$$
so the second half of the theorem is proved by showing that
$[\p \cocore_\Sigma] = 0 \in H_1(M' ; \RR)$ if and only if the stated
homological condition on $\gamma_1,\ldots,\gamma_N$ is satisfied.
Writing $M' = (M \setminus \nN(B_0)) \cup (-\Sigma \times S^1)$, we obtain
the Mayer-Vietoris sequence
$$
\ldots \to H_2(M') \to H_1(\p \nN(B_0)) \to H_1(M \setminus B_0) \oplus
H_1(\Sigma \times S^1) \to \ldots,
$$
where $H_1(\p\nN(B_0)) \cong \ZZ^{2N}$, with each component $\p\nN(\gamma_j)$
carrying the two distinguished generators $\mu_j,\lambda_j$ defined
in \S\ref{sec:intro}.
Denote the inclusions $\iota^M : \p\nN(B_0) \to M \setminus B_0$ and
$\iota^\Sigma : \p\nN(B_0) \to \Sigma \times S^1$.  Then
$\iota^\Sigma_*\lambda_j = [\p_j\Sigma \times \{*\}] \in H_1(\Sigma \times S^1)$ 
and $\iota^\Sigma_*\mu_j = [\{*\} \times S^1] \in H_1(\Sigma \times S^1)$,
so $\ker \iota^\Sigma_*$ consists of all classes of the form
$$
k \sum_{j=1}^N \lambda_j + \sum_{j=1}^N m_j \mu_j
$$
with $k, m_1,\ldots,m_N \in \ZZ$ and $\sum_j m_j = 0$.  Now,
$[\p\cocore_\Sigma]$ is represented by the cycle
$\{*\} \times S^1 \subset \Sigma \times S^1 \subset M'$, and it vanishes
in $H_1(M';\RR)$ if and only if
$$
A \bullet [\{*\} \times S^1] = 0
$$
for every $A \in H_2(M')$.  This is true if and only if the image of the
map $H_2(M') \to H_1(\p\nN(B_0))$ in the above sequence contains only
cycles of the form $\sum_j m_j\mu_j$.  In light of the above description
of $\ker \iota^\Sigma_*$, this is true if and only if
$$
k (\lambda_1 + \ldots + \lambda_N) \not\in \ker \iota^M_*
$$
for all $k \ne 0$.  This completes the proof of
Theorem~\ref{thm:2handleCohomology}.

The proof of Theorem~\ref{thm:roundHandleCohomology} proceeds similarly:
Assume $W = ([0,1] \times M) \cup \roundhandle_\Sigma$ is a
$\Sigma$-decoupling cobordism, with $\roundhandle_\Sigma = -\Sigma \times \AA$
attached along a neighborhood $\nN(\iI_0)$ of
$\iI_0 = T_1 \cup \ldots \cup T_N$, write $\p W = M' \sqcup (-M)$,
$\Omega := \omega|_{TM}$ and $\Omega' := \omega|_{TM'}$.  We again assume
that $\int_{\roundcore_\Sigma} \omega$ is arbitrarily large, and that
$\Omega$ takes the form of \eqref{eqn:Gaudy}, and we also impose
the extra condition
$$
\int_{T_j} \Omega = 0 \quad
\text{ for every component $T_j \subset \iI_0$.}
$$
In this case we can find a real $1$-cycle~$h$ in $M \setminus\nN(\iI_0)$
that represents $\PD([\Omega]) \in H_1(M;\RR)$.  Without changing the
cohomology class or the symplectic properties of~$\omega$, we can then also
assume that~$\Omega_0$ is supported in a tubular neighborhood of the
cycle~$h$.

Recall from \S\ref{sec:intro} that each oriented torus 
$T_j \subset \iI_0$ comes with a distinguished homology basis
$\{\mu_j,\lambda_j\} \subset H_1(T_j)$, where $\lambda_j$ is a boundary
component of a page and $\mu_j$ is represented by a Legendrian loop in~$T_j$.
This also gives rise to bases
$\{\mu_j^\pm,\lambda_j^\pm\}$ of $H_1(\p_\pm\nN(T_j))$, where
the orientation of $\mu_j^-$ is reversed compared with~$\mu_j$.
For $W = ([0,1] \times M) \cup \roundhandle_\Sigma$ we have the Mayer-Vietoris
sequence
$$
\ldots \to H_2(M) \oplus H_2(\roundhandle_\Sigma) \to H_2(W) \to H_1(\nN(\iI_0))
     \to H_1(M) \oplus H_1(\roundhandle_\Sigma) \to \ldots
$$
and resulting isomorphism
\begin{equation}
\begin{split}
\label{eqn:isoH2W'}
H_2(W) \cong \im \big(H_2(M) \oplus H_2(\roundhandle_\Sigma) \to & H_2(W)\big) 
\\ & \oplus
\ker \big( H_1(\nN(\iI_0)) \to H_1(M) \oplus H_1(\roundhandle_\Sigma) \big).
\end{split}
\end{equation}
Denote the generator of $H_1(\AA) = \ZZ$ by $[S^1]$, which can also naturally
be regarded as a primitive class in 
$H_1(\roundhandle_\Sigma) = H_1(\Sigma) \oplus H_1(\AA)$.  
Then writing the inclusions as $\iota^M : \nN(\iI_0) \hookrightarrow M$
and $\iota^\Sigma : \nN(\iI_0) \hookrightarrow \roundhandle_\Sigma$,
we have $\iota^\Sigma_*(\lambda_j) = [\p_j\Sigma \times \{*\}]$
and $\iota^\Sigma_*(\mu_j) = [S^1]$, hence $\ker \iota^\Sigma_*$ consists
of all classes of the form
$$
k \sum_{j=1}^N \lambda_j + \sum_{j=1}^N m_j \mu_j
$$
for $k, m_1,\ldots,m_N \in \ZZ$ with $\sum_j m_j = 0$.  For any
$\Gamma \in H_1(\iI_0)$ of this form which is also nullhomologous in~$M$,
we can form a cycle $A_\Gamma \in H_2(W)$ as follows.
First choose a $2$-chain $C_M$ in $\{0\} \times M$ with $\p C_M = \Gamma$.
Choose also a $1$-chain $\ell$ in~$\Sigma$ with boundary in~$\p\Sigma$ 
such that the $2$-chain $\ell \times \{*\} \times S^1$ 
in $\roundhandle_\Sigma$ has boundary
$$
\p(\ell \times \{*\} \times S^1) = -\sum_{j=1}^N m_j \mu_j,
$$
which is always possible since $\sum_j m_j = 0$.
We can represent $\ell$ by a properly immersed submanifold in~$\Sigma$
so that by Remark~\ref{remark:noKappa}, 
$\int_{\ell\times \{*\} \times S^1} \omega = 0$.
Now extend $\ell \times \{*\} \times S^1$ to a $2$-chain in~$W$
with boundary in $\{0\} \times M$ by attaching trivial cylinders over the
appropriate covers of Legendrian representatives of~$\mu_j$.  Since these
cylinders are Lagrangian, this construction yields an immersed submanifold
$L_\ell \subset W$ which satisfies
\begin{equation}
\label{eqn:Lell}
\int_{L_\ell} \omega = 0
\end{equation}
and $\p L_\ell \subset \iI_0 \subset \{0\} \times M$, with
$[\p L_\ell] = - \sum_j m_j \mu_j \in H_1(\iI_0)$.
We define~$A_\Gamma \in H_2(W)$ by
\begin{equation}
\label{eqn:AGamma}
A_\Gamma = C_M + [L_\ell] + k[\roundcore_\Sigma].
\end{equation}

\begin{prop}
There is a number $c > 0$ such that $\PD([\omega]) = [0,1] \times [h]
+ c [\roundcocore_\Sigma] \in H_2(W,\p W ; \RR)$.
\end{prop}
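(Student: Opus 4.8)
The argument will mirror the proof of the corresponding statement for $\Sigma$-capping cobordisms given above: two classes in $H_2(W,\p W;\RR)$ agree once they have the same intersection product with every class in $H_2(W;\RR)$, and $\PD([\omega])\bullet A=\int_A\omega$ for $A\in H_2(W)$, so it suffices to produce a number $c>0$ for which
\[
\int_A\omega = A\bullet\left([0,1]\times[h] + c\,[\roundcocore_\Sigma]\right)
\]
holds for $A$ ranging over a generating set of $H_2(W;\RR)$. By the Mayer--Vietoris isomorphism \eqref{eqn:isoH2W'}, such a set is furnished by (a) the image of $H_2(M)$, (b) the image of $H_2(\roundhandle_\Sigma)\cong H_1(\Sigma)$, and (c) the classes $A_\Gamma$ of \eqref{eqn:AGamma}, one for each $\Gamma$ in a generating set of $\ker\big(H_1(\nN(\iI_0))\to H_1(M)\oplus H_1(\roundhandle_\Sigma)\big)$.

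For $A$ in the image of $H_2(M)$ the identity holds for every $c$, just as in the capping case, since $A$ is disjoint from the handle and from $\roundcocore_\Sigma$ and $\int_A\omega=\int_A\Omega=A\bullet[h]$. For $A$ in the image of $H_2(\roundhandle_\Sigma)$, represent $A$ by a torus $\ell\times\{0\}\times S^1$ with $\ell\subset\Sigma$ a closed curve pushed off $\p\Sigma$ and chosen to miss the point $p$ used to define $\roundcocore_\Sigma=\{p\}\times\AA$. The hypothesis $\int_{T_j}\omega=0$ means the $1$-form $\kappa$ of the construction can be taken identically zero, so by Remark~\ref{remark:noKappa} this torus is Lagrangian and $\int_A\omega=0$; it is moreover disjoint from both $\roundcocore_\Sigma$ and $[0,1]\times h$, so both sides vanish, again for every $c$.

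There remain the generators $A_\Gamma=C_M+[L_\ell]+k[\roundcore_\Sigma]$, where $\Gamma=k\sum_j\lambda_j+\sum_j m_j\mu_j$ with $\sum_j m_j=0$. Using $\int_{L_\ell}\omega=0$ from \eqref{eqn:Lell}, the identity $\roundcore_\Sigma\bullet\roundcocore_\Sigma=1$, and the facts that $C_M$, $L_\ell$ and $\roundcore_\Sigma$ all meet $M$ only inside $\nN(\iI_0)$ (hence are disjoint from $[0,1]\times h$, as $h\subset M\setminus\nN(\iI_0)$) while $L_\ell$ and $\roundcore_\Sigma$ meet $\roundcocore_\Sigma$ trivially, one computes
\[
\int_{A_\Gamma}\omega = \int_{C_M}\Omega + k\int_{\roundcore_\Sigma}\omega,
\qquad
A_\Gamma\bullet\left([0,1]\times[h]+c\,[\roundcocore_\Sigma]\right)=C_M\bullet[h]+kc .
\]
Hence the desired identity for $A_\Gamma$ is equivalent to $c=\int_{\roundcore_\Sigma}\omega+\frac{1}{k}\big(\int_{C_M}\Omega-C_M\bullet[h]\big)$ when $k\neq0$, and to $\int_{C_M}\Omega=C_M\bullet[h]$ when $k=0$.

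The crucial step, and the only one not already present in the capping case (where \eqref{eqn:isoH2W} supplies at most one extra generator), is to see that these requirements are consistent, i.e.\ that the value of $c$ is independent of $\Gamma$ and that the $k=0$ case is automatic. For this I would invoke the normalizations of \S\ref{sec:deformation}: we may take $\Omega=C\,d\lambda+\Omega_0$ with $\Omega_0$ a Thom form of a tubular neighborhood of $h$, so that $\int_{C_M}\Omega_0=C_M\bullet h$ for every $2$-chain $C_M$ whose boundary avoids that neighborhood. Combined with Stokes' theorem and the fact that each meridian $\mu_j$ is Legendrian (whence $\int_{\mu_j}\lambda=0$), this yields $\int_{C_M}\Omega-C_M\bullet[h]=C\int_\Gamma\lambda=C\,k\sum_{j=1}^N\int_{\lambda_j}\lambda$, so for $k\neq0$ the formula reads $c=\int_{\roundcore_\Sigma}\omega+C\sum_j\int_{\lambda_j}\lambda$, independent of $\Gamma$, and for $k=0$ the required identity holds on the nose. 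Since each $\lambda_j$ is a positively oriented closed Reeb orbit of the Giroux form we have $\int_{\lambda_j}\lambda>0$, and $\int_{\roundcore_\Sigma}\omega$ is positive and can be taken arbitrarily large (Remark~\ref{remark:coreIsLarge}); therefore $c>0$. The main obstacle is precisely this consistency check: unlike in the capping case one cannot simply estimate $\int_{C_M}\Omega-C_M\bullet[h]$ but must evaluate it, which is why one needs both the freedom to choose $\Omega_0$ as a genuine Thom representative of $\PD([h])$ and the Legendrian property of the meridians, both of which are available from the setup of \S\ref{sec:deformation} and \S\ref{sec:intro}.
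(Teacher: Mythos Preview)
Your proof is correct and follows essentially the same route as the paper's: check the identity on the three types of generators from the Mayer--Vietoris decomposition \eqref{eqn:isoH2W'}, then verify that the resulting formula for~$c$ is independent of~$\Gamma$. There is one slip of wording: you list $C_M$ among the pieces that ``meet $M$ only inside $\nN(\iI_0)$'' and are hence disjoint from $[0,1]\times h$, which is false since $C_M$ lives entirely in $\{0\}\times M$---but your displayed formula $A_\Gamma\bullet([0,1]\times[h]+c[\roundcocore_\Sigma])=C_M\bullet[h]+kc$ correctly records the contribution $C_M\bullet[h]$, so the argument is unaffected.

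The only substantive difference is in the consistency check. The paper handles the non-exact case by subtracting a closed class $B\in H_2(M;\RR)$ with $B\bullet[h]=C_M\bullet[h]$, so that $C_M-B$ can be represented geometrically disjoint from~$h$ and the $\Omega_0$-integral vanishes; you instead invoke the Thom form property of~$\Omega_0$ directly to get $\int_{C_M}\Omega_0=C_M\bullet h$ when $\p C_M$ avoids the support. Both give $\int_{C_M}\Omega-C_M\bullet[h]=C\int_\Gamma\lambda$, proportional to~$k$ since the meridians are Legendrian. Your version has the pleasant side effect of yielding the explicit formula $c=\int_{\roundcore_\Sigma}\omega+C\sum_j\int_{\lambda_j}\lambda$, making $c>0$ evident without appealing to Remark~\ref{remark:coreIsLarge}, whereas the paper is content to make the core integral large.
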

\begin{proof}
The goal is again to prove
\begin{equation}
\label{eqn:intersectionRound}
\int_A \omega = A \bullet \left( [0,1] \times [h] + c [\roundcocore_\Sigma] \right)
\end{equation}
for every $A \in H_2(W)$, and it is again immediate if
$A \in \im(H_2(M) \to H_2(W))$.  It is also clear for 
$A \in \im(H_2(\roundhandle_\Sigma) \to H_2(W))$, as $H_2(\roundhandle_\Sigma)$
is generated by classes of the form
$\ell' \times [S^1]$ for $\ell' \in H_1(\Sigma)$,
hence both sides of \eqref{eqn:intersectionRound} vanish
(see Remark~\ref{remark:noKappa}).

The rest of $H_2(W)$ is generated by classes of the form
$A_\Gamma$ defined in \eqref{eqn:AGamma}, for which
$$
\int_{A_\Gamma} \omega = \int_{C_M} \Omega + k \int_{\roundcore_\Sigma} \omega
$$
in light of \eqref{eqn:Lell}.  Similarly,
$L_\ell$ does not intersect either $[0,1] \times h$
or $\roundcocore_\Sigma$, thus
$$
A_\gamma \bullet \left( [0,1] \times [h] + c [\roundcocore_\Sigma] \right)
= C_M \bullet [h] + kc,
$$
and \eqref{eqn:intersectionRound} is thus satisfied if and only if
$$
c = \int_{\roundcore_\Sigma}\omega + \frac{1}{k} \left(
\int_{C_M} \Omega - C_M \bullet [h] \right),
$$
which is positive if $\int_{\roundcore_\Sigma} \omega$ is made
sufficiently large.  To see that this formula for~$c$ doesn't depend on
any choices,
observe that if~$\Omega$ is exact, then $h=0$ and $\Omega = C\, d\lambda$, so
\begin{equation*}
\int_{C_M} \Omega - C_M \bullet [h] = C \int_{\p C_M} \lambda
\end{equation*}
is proportional to~$k$, as the integral of~$\lambda$ vanishes on all
the meridians~$\mu_j$.  When $\Omega$ is not exact but equals
$C\, d\lambda + \Omega_0$ with $\Omega_0$ supported in a
tubular neighborhood of~$h$, we can find a real homology class 
$B \in H_2(M;\RR)$ with $B \bullet [h] = C_M \bullet [h]$ and thus define 
a real $2$-chain
$$
C_M' := C_M - B
$$
with $\p C_M' = \p C_M$ and $C_M' \bullet [h] = 0$.  Then up to relative
homology, $C_M'$ can be represented by a real linear combination of immersed
surfaces that have no geometric intersection with~$h$, hence
$\int_{C_M'} \Omega_0 = 0$.  Now since $\int_B \Omega = B \bullet [h]$,
$$
\int_{C_M} \Omega - C_M \bullet [h] = \int_{C_M'} \Omega =
C \int_{C_M'} d\lambda = C \int_{\p C_M} \lambda,
$$
and this is again proportional to~$k$.
\end{proof}

If $\{0\} \times M \subset (W,\omega)$ is concave, then writing
$h=0$ and $\Omega = d\lambda$ gives
$$
\int_{A_\Gamma} \omega = \int_\Gamma \lambda + k \int_{\roundcore_\Sigma} \omega
$$
for any cycle $\Gamma = k(\lambda_1 + \ldots + \lambda_N) +
\sum_j m_j\mu_j \in H_1(\iI_0)$ with $\sum_j m_j = 0$ that is 
nullhomologous in~$M$.  Since $\int_\Gamma \lambda$ is also positively
proportional to~$k$, this proves that~$\omega$ is exact if and only if
there is no such nullhomologous cycle~$\Gamma$ with $k > 0$.
Moreover, $\PD([\Omega']) = c [\p\roundcocore_\Sigma] \in H_1(M';\RR)$
for some $c > 0$, so it remains to characterize the situations where this
homology class vanishes.  Write $M' = (M \setminus \nN(\iI_0)) 
\cup (-\Sigma_+ \times S^1) \cup (-\Sigma_- \times S^1)$ and consider
the resulting Mayer-Vietoris sequence
$$
\ldots \to H_2(M') \to H_1(\p \nN(\iI_0)) \to H_1(M \setminus \iI_0) \oplus
H_1((\Sigma_+ \sqcup \Sigma_-) \times S^1) \to \ldots,
$$
where $H_1(\p\nN(\iI_0))$ is freely generated by the~$4N$ cycles
$\mu_j^\pm, \lambda_j^\pm$.
Denote the inclusions $\iota^M : \p\nN(\iI_0) \to M \setminus \iI_0$ and
$\iota^\Sigma : \p\nN(\iI_0) \to (\Sigma_+ \sqcup \Sigma_-) \times S^1$,
where the latter maps $\p_\pm\nN(T_j)$ into $\Sigma_\pm \times S^1$.  Then
$\iota^\Sigma_*\lambda_j^\pm = [\p_j\Sigma \times \{*\}] \in 
H_1(\Sigma_\pm \times S^1)$ 
and $\iota^\Sigma_*\mu_j^\pm = [\{*\} \times S^1] \in 
H_1(\Sigma_\pm \times S^1)$,
thus $\ker \iota^\Sigma_*$ consists of all classes of the form
\begin{equation}
\label{eqn:theKernel}
k_+ \sum_{j=1}^N \lambda_j^+ + k_- \sum_{j=1}^N \lambda_j^- +
\sum_{j=1}^N m_j^+ \mu_j^+ + \sum_{j=1}^N m_j^- \mu_j^-,
\end{equation}
with $k_\pm, m_1^\pm,\ldots,m_N^\pm \in \ZZ$ satisfying 
$\sum_j m_j^+ = \sum_j m_j^- = 0$.  The co-core $\roundcocore_\Sigma$ has
two boundary components, one generating each of the cycles
$\{*\} \times S^1 \subset \Sigma_\pm \times S^1 \subset M'$, which we will
denote by $[S^1]_\pm \in H_1(M')$.  Thus $[\p\roundcocore_\Sigma]$ vanishes
in $H_1(M';\RR)$ if and only if
$$
A \bullet ([S^1]_+ + [S^1]_-) = 0
$$
for every $A \in H_2(M')$.  This is true if and only if the image of the
map $H_2(M') \to H_1(\p\nN(\iI_0))$ in the above sequence contains only
cycles of the form \eqref{eqn:theKernel} with $k_+ + k_- = 0$,
meaning that cycles of this form with $k_+ + k_- \ne 0$ are never
trivial in $H_1(M \setminus \iI_0)$.

We've now characterized the cases in which $\Omega'$ is globally 
exact on~$M'$; of course this never happens if $M'_{\text{flat}} \ne \emptyset$
since the latter then contains closed pages on which~$\Omega'$ is positive.
If both $M'_{\text{convex}}$ and~$M'_{\text{flat}}$ are nonempty, then
the interesting question is when $\Omega'$ will be exact on 
$M'_{\text{convex}}$, which is the case if and only if
$$
[\p\roundcocore_\Sigma \cap M'_{\text{convex}}] = 0 \in 
H_1(M'_{\text{convex}} ; \RR).
$$
Assuming the labels are chosen so that $\Sigma_+ \times S^1 \subset
M'_{\text{convex}}$ and $\Sigma_- \times S^1 \subset M'_{\text{flat}}$,
$[\p\roundcocore_\Sigma \cap M'_{\text{convex}}]$ is now represented
by $\{*\} \times S^1 \subset \Sigma_+ \times S^1$, and a repeat of the
above argument shows that this cycle vanishes if and only if
$H_1(\p \nN(\iI_0))$ contains no cycle of the form \eqref{eqn:theKernel}
which vanishes in $H_1(M \setminus \iI_0)$ and has $k_+ \ne 0$.
We observe however that in this situation, $\iI_0$ must separate~$M$
so that each $\p_+\nN(T_j)$ lies in a different connected component
of $M \setminus \iI_0$ from each $\p_-\nN(T_i)$, hence a cycle of the
form \eqref{eqn:theKernel} vanishes in $H_1(M \setminus \iI_0)$ if and
only if both the plus and minus parts vanish.  Our condition is thus
reduced to the nonexistence of a cycle
$$
k \sum_{j=1}^N \lambda_j^+ + \sum_{j=1}^N m_j \mu_j^+
$$
with $k \ne 0$ that vanishes in $H_1(M \setminus \iI_0)$.

\subsection{Proofs of the results from \S\ref{sec:results}}
\label{sec:proofs}

\begin{proof}[Proofs of Theorems~\ref{thm:overtwisted} and~\ref{thm:weak}]
To prove Theorem~\ref{thm:weak}, suppose $(M,\xi)$ contains an
$\Omega$-separating planar $k$-torsion domain $M_0$ for some closed $2$-form
$\Omega$ with $\Omega|_\xi > 0$ and an integer $k \ge 1$. 
Then $\int_T \Omega = 0$ for every interface torus~$T$
in~$M_0$ that lies in the planar piece, so we are free to remove any such
torus by attaching a $\DD$-decoupling cobordism whose symplectic
structure matches~$\Omega$ at~$M$.  By
Proposition~\ref{prop:kTok-1}, one can find a binding component~$\gamma$
or interface torus~$T$ such that if~$(W,\omega)$ with $\p W = M' \sqcup (-M)$ 
denotes the result of
attaching the corresponding $\DD$-capping or $\DD$-decoupling cobordism
respectively, then $M'$ contains a planar torsion domain of order either
$k-1$ or $k-2$.  Writing $\Omega' := \omega|_{TM'}$, the latter is also
$\Omega'$-separating since near each of the remaining interface tori,
which lie outside the region of surgery, 
$\omega$ is still cohomologous to the original~$\Omega$.  The process
can therefore be repeated until the manifold at the top has
planar $0$-torsion, meaning it is overtwisted.

Theorem~\ref{thm:overtwisted} is essentially the special case of
Theorem~\ref{thm:weak} for which we assume~$\Omega$ is exact to start with,
except that the above argument actually gives a \emph{weak} symplectic
cobordism $(W,\omega)$ from $(M,\xi)$ to some overtwisted
$(M_\OT,\xi_\OT)$, where we can assume $M \subset (W,\omega)$ is concave
and $M_\OT \subset (W,\omega)$ is not necessarily convex, but
$\omega|_{\xi_\OT} > 0$.  This can now be turned into a strong cobordism
by the following trick which was suggested to me by David Gay:
first, observe that if $M_\OT$ is a rational homology sphere, then
$\omega$ is exact near~$M_\OT$ and can thus be deformed to make~$M_\OT$ 
convex using the argument of Eliashberg 
in \cite{Eliashberg:contactProperties}*{Proposition~3.1}.  
Otherwise, take any knot $K \subset M_\OT$ that is nontrivial
in $H_1(M_\OT ; \QQ)$, and isotop it if necessary so that it is
disjoint from some overtwisted disk.  Then after a $C^0$-small perturbation
to make~$K$ Legendrian, one can attach a symplectic $2$-handle along~$K$
so that the new positive boundary becomes an overtwisted
contact manifold $(M'_\OT,\xi'_\OT)$ with
$\dim H_1(M'_\OT ; \QQ) = \dim H_1(M_\OT ; \QQ) - 1$,
see Lemma~\ref{lemma:DehnHomology} below.  
Repeating this process
enough times, the positive boundary eventually becomes an overtwisted 
rational homology sphere, so that the weak cobordism can be deformed to a 
strong one.
\end{proof}

The final step in the above proof is justified by the following
simple homological lemma:
\begin{lemma}
\label{lemma:DehnHomology}
Suppose $M$ is a closed oriented $3$-manifold, $K \subset M$ is a knot
with $[K] \ne 0 \in H_1(M;\QQ)$ and $M'$ is the result of performing
Dehn surgery along $K$ with any framing.  Then
$\dim H_1(M' ; \QQ) = \dim H_1(M ; \QQ) - 1$.
\end{lemma}
\begin{proof}
As preparation, suppose $K$ is any knot in a closed oriented
$3$-manifold $M$, denote a neighborhood of $K$ in~$M$ by $N_K$
and let $(\mu,\lambda)$ denote any basis of $H_1(\p N_K)$ such that
$\mu$ is a meridian.  If $\iota : \p N_K \hookrightarrow M\setminus K$
denotes the inclusion, we claim that
$$
\dim H_1(M ; \QQ) = \begin{cases}
\dim H_1(M \setminus K ; \QQ) & \text{ if $\iota_*\mu = 0 \in H_1(M \setminus K ; \QQ)$}, \\
\dim H_1(M \setminus K ; \QQ) - 1 & \text{ if $\iota_*\mu \ne 0 \in H_1(M\setminus K; \QQ)$}.
\end{cases}
$$
This follows from the Mayer-Vietoris sequence for $M = N_K \cup (M \setminus K)$:
$$
\ldots H_2(M ; \QQ) \to H_1(\p N_K ; \QQ) \stackrel{\Phi}{\to} H_1(N_K ; \QQ) \oplus
H_1( M\setminus K; \QQ) \to H_1(M ; \QQ) \to H_0(\p N_K; \QQ) \ldots
$$
Since any $1$-cycle in $M$ can be disjoined from $N_K$, the map $H_1(M ; \QQ) \to
H_0(\p N_K ; \QQ)$ in this sequence is trivial, thus exactness implies
$$
H_1(M ; \QQ) \cong \left( H_1(N_K;\QQ) \oplus H_1(M\setminus K; \QQ) \right) \Big/
\im \Phi.
$$
The map $\Phi : H_1(\p N_K ; \QQ) \to H_1(N_K ; \QQ) \oplus H_1(M\setminus K; \QQ)$ is
nontrivial since $\lambda$ maps to the generator of $H_1(N_K ; \QQ) = \QQ$.
Since $\mu$ maps to $0$ in $H_1(N_K ; \QQ)$, $\im \Phi$ is
thus $1$-dimensional if and only if $\iota_*\mu = 0 \in H_1(M\setminus K; \QQ)$,
and is otherwise $2$-dimensional.  This proves the claim.

Now assume $[K] \ne 0 \in H_1(M ; \QQ)$, and we are given a framing such
that $\lambda$ is the preferred longitude.  This implies immediately that
$\iota_*\lambda \ne 0 \in H_1(M\setminus K; \QQ)$.  Likewise, 
$\iota_*\mu = 0 \in H_1(M\setminus K; \QQ)$: to see this, note that by the
nondegeneracy of the intersection form, there exists a $2$-cycle $C$ in~$M$
such that $[C] \bullet [K] = 1$, hence the restriction of $C$ to
the complement of $N_K$ defines a chain whose boundary is~$\mu$; alternatively,
one can also derive this from the exact sequence above by considering the
image of $[C]$ under $H_2(M;\QQ) \to H_1(\p N_K; \QQ)$.  We therefore have
$\dim H_1(M ; \QQ) = \dim H_1(M\setminus K ; \QQ)$ by the claim above.
If $M'$ is now defined
by gluing another solid torus into $M\setminus N_K$ such that $\lambda$
becomes the meridian, then the claim is again applicable and
implies $\dim H_1(M' ; \QQ) = \dim H_1(M\setminus K ; \QQ) - 1$.
\end{proof}

\begin{proof}[Proof of Theorem~\ref{thm:caps}]
Suppose $(M,\xi)$ contains an $\Omega$-separating partially planar
domain $M_0 \subset M$ with planar piece $M_0^P \subset M_0$, where
$\Omega$ is a closed $2$-form on~$M$ satisfying $\Omega|_\xi > 0$.
Then for every binding circle or interface torus in~$M_0^P$, we can
attach $\DD$-capping or $\DD$-decoupling cobordisms to produce a
symplectic manifold $(W,\omega)$ with $\p W = M' \sqcup (-M)$,
$\omega|_{TM} = \Omega$ and
$$
M' = M'_{\text{flat}} \sqcup M'_{\text{convex}},
$$
where $M'_{\text{flat}}$ contains a component that 
is a symplectic $S^2$-fibration over $S^1$,
and $M'_{\text{convex}}$ carries a contact structure~$\xi'$ with
$\omega|_{\xi'} > 0$.  The desired cap is then obtained from $(W,\omega)$
after capping $M'_{\text{flat}} \sqcup M'_{\text{convex}}$ via
\cite{Eliashberg:cap} or \cite{Etnyre:fillings}.
\end{proof}

\begin{proof}[Proof of Theorem~\ref{thm:toS3}]
Note that since $H^2_\dR(S^3) = 0$, any weak cobordism from $(M,\xi)$ to
$(S^3,\xi_0)$ that is concave at~$M$ can be deformed to a strong
cobordism, so it suffices to prove that $(S^3,\xi_0)$ can be obtained
from $(M,\xi)$ by a finite sequence of (generally weak) capping and
decoupling cobordisms.

Suppose $M_0 \subset M$ is a partially planar domain.  If it is also
a planar torsion domain then the result already follows from
Corollary~\ref{cor:equivalence}, thus assume not.  If~$M_0$ has only
one irreducible subdomain with nonempty binding, we can remove binding
components by $\DD$-capping cobordisms and interface tori by
$\DD$-decoupling cobordisms until the planar piece has exactly one
binding component left and no interface or boundary, which means
it is the tight~$S^3$.  The desired cobordism can then be obtained by
capping any additional components that may remain at the end of this
process.

If~$M_0$ has more than one irreducible subdomain but does not have planar
torsion, then it must be symmetric (see Definition~\ref{defn:symmetric}).
This means that $M=M_0$, the binding and boundary are empty and the
interface tori divide~$M$ into exactly two irreducible subdomains that have
diffeomorphic planar pages.  Then we can remove interface tori by
$\DD$-decoupling cobordisms until exactly one remains, and the resulting
contact manifold is the tight $S^1\times S^2$.  The latter is cobordant
to~$S^3$ by a $\DD$-capping cobordism that removes one binding component
from the supporting open book with cylindrical pages and trivial monodromy.

Theorem~\ref{thm:planar} follows essentially 
by the same argument since every planar
open book is also a fully twisted partially planar domain.  We only need
to add that the topological construction of the cobordism by attaching
$2$-handles along binding components does not depend
on the choice of $2$-form $\Omega$ on~$M$, which after the deformation
carried out in \S\ref{sec:deformation}, always looks the same on a large
tubular neighborhood of the binding.
\end{proof}

\subsection{Holomorphic curves}
\label{sec:holomorphic}

For applications to Embedded Contact Homology and Symplectic Field Theory
among other things, it may be quite helpful to observe that
the cobordism $(W,\omega)$ generally admits not
only a symplectic structure but also a foliation by $J$-holomorphic curves.
We don't plan to pursue this here in full detail, but we shall give a sketch of
the general picture.  For simplicity, we consider only the 
$\Sigma$-decoupling cobordism along $\iI_0 = T_1 \cup \ldots \cup T_N$
in the case where
the negative boundary component $(M,\xi)$ is concave, so we can 
arrange~$\omega$ near $\{0\} \times M$ to have the form
\begin{equation}
\label{eqn:symplectization}
\omega = d\left( \varphi(t) \lambda_0 + \lambda\right)
\end{equation}
as in \S\ref{sec:modelCobordism}, where $\lambda_0$ and $\lambda$ are the
confoliation $1$-form and contact form respectively that were constructed
in \S\ref{sec:model}.  These have the following convenient properties:
\begin{itemize}
\item $\lambda_0 \wedge d\lambda > 0$
\item $\ker d\lambda \subset \ker d\lambda_0$
\end{itemize}
Together with the obvious fact that $d\lambda$ is closed, these properties
mean that the pair $(\lambda_0,d\lambda)$ is a \defin{stable Hamiltonian
structure}, to which we associate the co-oriented distribution
$\xi_0 = \ker\lambda_0$ and positively transverse
vector field~$X_0$ on~$M$ such that
$$
\lambda_0(X_0) \equiv 1, \qquad d\lambda(X_0,\cdot) \equiv 0.
$$
Similarly, writing $\Omega' := \omega|_{TM'}$ and recalling the
confoliation $1$-form $\lambda_0'$ defined on~$M'$ by \eqref{eqn:lambda0'},
the pair
$(\lambda_0',\Omega')$ forms a stable Hamiltonian structure on~$M'$, and
we define the corresponding distribution $\xi_0' = \ker\lambda_0'$ and
vector field $X_0'$ on~$M'$ such that
$$
\lambda_0'(X_0') \equiv 1, \qquad \Omega'(X_0',\cdot) \equiv 0.
$$
In fact, on components of $M'$ where the pages are not closed, one
can show with a little more effort that $\lambda_0'$ admits
a perturbation to a contact form $\lambda'$ on~$M'$ such that
$\xi' := \ker\lambda'$ is dominated by $\Omega'$, where
$$
\Omega' = F\, d\lambda'
$$
for some smooth function $F : M' \to (0,\infty)$ that satisfies
$dF \wedge d\lambda' \equiv 0$ and is constant outside a neighborhood
of the boundary of the co-core, $\p\roundcocore_\Sigma \subset M'$, hence
$X_0'$ is colinear with the Reeb vector field determined by~$\lambda'$.
There is now a collar neighborhood $(-\epsilon,0] \times M' \subset W$ of~$M'$ 
on which $\omega$ takes the form
$d(t \lambda_0') + \Omega'$, thus we can attach positive and negative
cylindrical ends to define the \defin{completion} of~$(W,\omega)$,
$$
W^\infty := W \cup \left( (-\infty,0] \times M \right) \cup
\left( [0,\infty) \times M' \right)
$$
and extend~$\omega$ symplectically so that it takes the form
$d(\varphi(t)\, \lambda_0) + d\lambda$ on $(-\infty,0] \times M$ and
$d(\psi(t)\, \lambda_0') + \Omega'$ on $[0,\infty) \times M'$ for
suitable choices of functions $\varphi$ and~$\psi$.  Equivalently,
$(W^\infty,\omega)$ can be constructed directly from the symplectization
of $(M,\xi)$ as follows: extend the function $\varphi(t)$ to~$\RR$ so that
\eqref{eqn:symplectization} defines a symplectic form on $\RR\times M$,
and extend the ``hole'' $\uU_{\iI_0}$ defined in
\S\ref{sec:modelCobordism} to a hole in
$\RR\times M$ by including the interior of the 
region $(1,\infty) \times \nN(\iI_0)$; let
$$
\uU_{\iI_0}^\infty \subset \RR\times M
$$
denote the extended hole.  Then $(W^\infty,\omega)$ can be obtained
by removing $\uU_{\iI_0}^\infty$ from $(\RR\times M,d(\varphi(t)\lambda_0 +
\lambda))$ and replacing it with the completion of the round handle,
$$
\roundhandle_\Sigma^\infty := \Sigma \times (-\infty,\infty) \times S^1,
$$
extending the symplectic form over $\roundhandle_\Sigma^\infty$ by an
adaptation of the argument in \S\ref{sec:modelCobordism}.

An almost complex structure $J$ on $(W^\infty,\omega)$ is now
\defin{admissible} if it is $\omega$-compatible on~$W$ and is compatible
with the stable Hamiltonian structures on both cylindrical
ends, meaning it is $\RR$-invariant, restricts to a complex structure 
on the respective distributions $\xi_0$ and $\xi_0'$ defining the 
correct orientations, and
maps the unit vector $\p_t$ in the $\RR$-direction to the vector field
$X_0$ or $X_0'$.

It was shown in \cite{Wendl:openbook2} that for a
suitable choice of almost complex structure $J_0$ on
$\RR \times M$ compatible with $(\lambda_0,d\lambda)$, the pages of the
blown up summed open book~$\boldsymbol{\pi}$ in~$M_0$ admit lifts to embedded
$J_0$-holomorphic curves in $\RR \times M$ which match the fibers of the
mapping torus $S_\psi$ outside of the neighborhoods $\nN_i$ of
$B \cup \iI \cup \p M_0$, have positive cylindrical
ends approaching closed orbits of~$X_0$ in $B \cup \iI \cup \p M_0$ and
satisfy a suitable finite energy condition.  We can now define an
admissible almost complex structure on $(W^\infty,\omega)$ which
matches $J_0$ outside of $\uU_{\iI_0}^\infty$ and is $\omega$-compatible
on $\roundhandle_\Sigma^\infty$.  The $J_0$-holomorphic curves in
$(\RR\times M) \setminus \uU_{\iI_0}^\infty$ can be extended into
$\roundhandle_\Sigma^\infty$ as symplectic surfaces that are
diffeomorphic to~$\Sigma$ and foliate $\roundhandle_\Sigma^\infty$, thus
we can extend~$J_0$ into the handle so that it is $\omega$-compatible
and these surfaces become $J_0$-holomorphic.  In doing this,
we can also make the natural completion of the core 
$$
\roundcore_{\Sigma,\infty} := \roundcore_\Sigma \cup_{\widehat{B}_0} 
((-\infty,0] \times \widehat{B}_0)
$$
$J_0$-holomorphic, as well as its translations under the
$S^1$-action by translating the local $\phi$-coordinates, and the completion
of the co-core
$$
\roundcocore_{\Sigma,\infty} := \roundcocore_\Sigma
\cup_{\p\roundcocore_\Sigma} ( [1,\infty) \times \p\roundcocore_\Sigma).
$$
The result is a foliation of $W^\infty$ (or at least the
region outside of $\RR\times (M \setminus M_0)$) by
finite energy $J_0$-holomorphic curves.  We summarize this construction
as follows (see Figure~\ref{fig:Jhol}).

\begin{prop}
\label{prop:Jhol}
One can choose an admissible almost complex structure $J_0$ on the 
completion $(W^\infty,\omega)$ of a $\Sigma$-decoupling cobordism
$(W,\omega)$, such that there exists a foliation $\fF$ by embedded
$J_0$-holomorphic curves with the following properties:
\begin{enumerate}
\item In each cylindrical end, the leaves of~$\fF$ match the holomorphic
lifts of the pages of $\boldsymbol{\pi}$ and $\boldsymbol{\pi}'$
constructed in \cite{Wendl:openbook2}.
\item The completed core $\roundcore_{\Sigma,\infty}$ and all its
$S^1$-translations are leaves of~$\fF$.
\item The trivial cylinders over periodic orbits in $B$, $\p M_0$ and
$\iI \setminus \iI_0$ are all leaves of~$\fF$.
\item All other leaves of~$\fF$ have only positive cylindrical ends
asymptotic to orbits in $B \cup (\iI \setminus \iI_0) \cup \p M$, and they
are homotopic in the moduli space to the holomorphic lifts of the pages
of~$\boldsymbol{\pi}'$ in $[1,\infty) \times M'$.
\item The completed co-core $\roundcocore_{\Sigma,\infty}$ is also
$J_0$-holomorphic and intersects the leaves of~$\fF$ transversely.
\end{enumerate}
\end{prop}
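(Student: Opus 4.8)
The plan is to construct $\fF$ by extending, across the completed round handle $\roundhandle_\Sigma^\infty$, the holomorphic page foliation on the symplectization of the concave end, and then checking that the pieces glue into one smooth foliation of $W^\infty$ (at least outside $\RR\times(M\setminus M_0)$). First I would invoke the main construction of \cite{Wendl:openbook2}: for a suitable admissible almost complex structure $J_0$ on $\bigl(\RR\times M,\,d(\varphi(t)\lambda_0+\lambda)\bigr)$ compatible with the stable Hamiltonian structure $(\lambda_0,d\lambda)$, the pages of $\boldsymbol{\pi}$ on $M_0$ lift to an $\RR$-invariant foliation $\fF_0$ of $\RR\times M$ (outside $\RR\times(M\setminus M_0)$) by embedded finite-energy $J_0$-holomorphic curves, whose leaves are the trivial cylinders over the closed $X_0$-orbits in $B\cup\iI\cup\p M_0$ together with the page lifts, the latter agreeing with the fibres of the mapping torus $S_\psi$ outside the neighbourhoods $\nN_i$ and having only positive ends. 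The extra point I need here is that, in the concrete model of \S\ref{sec:model}, both $J_0$ and the leaves of $\fF_0$ can be taken in an explicit $S^1$-invariant product normal form on each $\nN_i$; in particular near $\iI_0$ the leaves of $\fF_0$ are cylinders, lifted in the symplectization direction, over the circles $S^1_\theta\times\{(\rho,\phi)\}$, and over the orbits $S^1_\theta\times\{(0,\phi)\}\subset T_j$ once one chooses the functions of \S\ref{sec:model} so that $f'(0)=0$, which makes those longitudinal loops closed $X_0$-orbits.

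Next I would remove the completed hole $\uU_{\iI_0}^\infty\subset\RR\times M$ of \S\ref{sec:modelCobordism}. Since $\uU_{\iI_0}^\infty$ lies in $\bigcup_j(1/2,\infty)\times\nN(T_j)$, meets only the orbits $\widehat{B}_0$, and is bounded by the hypersurfaces $H_{T_j}$ which were built to be transverse to $X_\theta$, one can---coordinating the choice of the $H_{T_j}$ with the explicit page curves, shrinking $\delta$, and if necessary adjusting $J_0$ on the collars $\vV_j\times\AA$ where there is ample freedom---arrange that every leaf of $\fF_0$ meets $\p\uU_{\iI_0}^\infty$ transversally in a disjoint union of circles, each of which is one of the circles $\p_j\Sigma\times\{(\tau,\phi)\}\subset\p\roundhandle_\Sigma^\infty$ in the handle collar coordinates $(\sigma,\theta,\tau,\phi)$, while the trivial cylinder over $\widehat{B}_0$ meets it in the circles $\p_j\Sigma\times\{(0,0)\}$. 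After the removal, the leaves of $\fF_0$ restrict to a foliation of $(\RR\times M)\setminus\uU_{\iI_0}^\infty$ by surfaces whose boundary circles all lie on $\p\roundhandle_\Sigma^\infty$ in this product form.

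Finally I would extend over $\roundhandle_\Sigma^\infty=\Sigma\times\RR\times S^1$. Gluing each cut-off page surface to the fibre $-\Sigma\times\{(\tau,\phi)\}$ through the matching boundary circles, the cut-off cylinder over $\widehat{B}_0$ to $-\Sigma\times\{(0,0)\}$, and including all their $S^1$-translates in the $\phi$-coordinate, the construction of $\boldsymbol{\pi}'$ in \S\ref{sec:modelCobordism} together with Lemma~\ref{lemma:extendingOmega0} (in the $\kappa\equiv0$ case, which applies since $\omega$ is exact near the concave boundary) shows that the closed-up surfaces are exactly the leaves of the product foliation of $\roundhandle_\Sigma^\infty$ by the $-\Sigma$-fibres, that they are symplectic, and that over the ends $\tau\to\pm\infty$ they match the holomorphic lifts of the pages (closed or not) of $\boldsymbol{\pi}'$ in the positive cylindrical end. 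The complementary product foliation by the slices $\{p\}\times\RR\times S^1$ is likewise symplectic, and the co-core leaf $\roundcocore_{\Sigma,\infty}=\roundcocore_\Sigma\cup_{\p\roundcocore_\Sigma}\bigl([1,\infty)\times\p\roundcocore_\Sigma\bigr)$, for $p$ an interior point of $\Sigma$ away from the collars, restricts over the ends to trivial cylinders over closed orbits of $X_0'$. One then chooses an $\omega$-compatible $J_0$ on $\roundhandle_\Sigma^\infty$ preserving both of these complementary symplectic distributions---such $J_0$ exists by the standard fact that a compatible complex structure can be found preserving any pair of complementary symplectic subbundles---arranged to equal the previously chosen $J_0$ near $\p\roundhandle_\Sigma^\infty$ and the standard admissible structure near the ends. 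With this $J_0$ every closed-up page surface, the completed core $\roundcore_{\Sigma,\infty}$ and its $S^1$-translates, and $\roundcocore_{\Sigma,\infty}$ are $J_0$-holomorphic, and $\roundcocore_{\Sigma,\infty}$ meets each $-\Sigma$-fibre transversally in one point, hence---being disjoint from all the trivial cylinders---meets every leaf of $\fF$ transversally. Collecting the lifts of the pages of $\boldsymbol{\pi}$ and $\boldsymbol{\pi}'$, the trivial cylinders over orbits in $B\cup(\iI\setminus\iI_0)\cup\p M_0$, the completed core together with its $S^1$-family, and the completed co-core then yields a foliation $\fF$ with the five claimed properties; that the leaves are pairwise disjoint and exhaust $W^\infty\setminus\bigl(\RR\times(M\setminus M_0)\bigr)$ follows from the corresponding statement for $\fF_0$ and the product structure on the handle.

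The main obstacle will be the matching in the collar, i.e.\ checking that the almost complex structure and page curves supplied by \cite{Wendl:openbook2} are in a standard enough normal form near $\iI_0$ that the cut-off curves close up smoothly against the product foliation of the handle---equivalently, that each $H_{T_j}$ can be taken simultaneously transverse to $X_\theta$ (as demanded in \S\ref{sec:modelCobordism}) and to the leaves of $\fF_0$, meeting each leaf in the prescribed product circles. This is essentially a compatibility statement between the two explicit local models of \cite{Wendl:openbook2} and \S\ref{sec:model} and \S\ref{sec:modelCobordism}, and it is where the real work lies; the remaining steps are soft.
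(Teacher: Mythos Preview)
Your proposal is correct and follows essentially the same approach as the paper's sketch: invoke the holomorphic page foliation from \cite{Wendl:openbook2} on $\RR\times M$, extend the cut-off leaves across the completed handle as the symplectic $-\Sigma$-fibres, and then choose $J_0$ on $\roundhandle_\Sigma^\infty$ so that these surfaces (together with the core, its $S^1$-translates, and the co-core) become holomorphic. You have added more detail than the paper gives---in particular your explicit description of the product normal form near $\iI_0$, the use of the complementary symplectic splitting on the handle to make the co-core simultaneously holomorphic, and your identification of the collar-matching compatibility as the genuine technical point---and the paper itself concedes that it is only giving a sketch, so your version is if anything more honest about where the work lies.
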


\begin{figure}
\begin{center}
\includegraphics[width=5in]{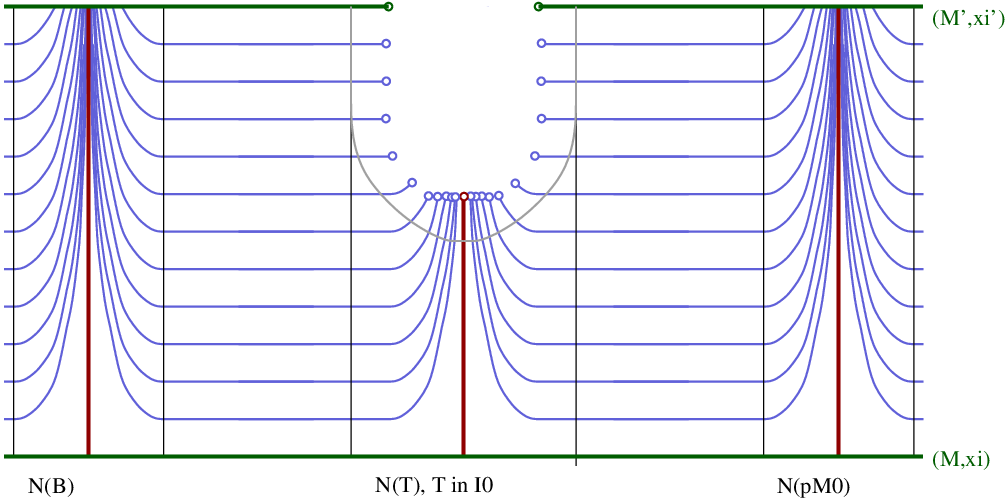}
\caption{\label{fig:Jhol}
The $J_0$-holomorphic foliation described by Proposition~\ref{prop:Jhol}
for a $\Sigma$-decoupling cobordism, not including the cylindrical ends.  
The picture shows the~$t$ and
$\rho$~coordinates near various binding, interface and boundary components, 
including an interface torus~$T$ where a handle $\Sigma \times \AA$ 
has been attached.
The circles at the ends of leaves in this region represent capping by~$\Sigma$.
The core $\roundcore_{\Sigma,\infty}$ is shown as the vertical leaf directly
in the center, which emerges from $-\infty$ and is capped off in the handle.}
\end{center}
\end{figure}

In considering the behavior of holomorphic curve invariants under symplectic
cobordisms, a special role is typically played by curves that have no
positive ends---such curves can only exist in non-exact cobordisms.
One useful application of the foliation constructed above is that we
can now characterize all such curves precisely:

\begin{prop}
\label{prop:noPositive}
Suppose $u : \dot{\Sigma} \to W^\infty$ is a finite energy
$J_0$-holomorphic curve that is connected, somewhere injective and has
no positive ends.  Then~$u$ is a leaf of~$\fF$, specifically it is an 
$S^1$-translation of the core $\roundcore_{\Sigma,\infty}$.
\end{prop}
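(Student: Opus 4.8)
The plan is to extract Proposition~\ref{prop:noPositive} from the holomorphic foliation $\fF$ of Proposition~\ref{prop:Jhol} by combining a maximum principle with positivity of intersections. First I would use the absence of positive ends to pin down where the image of $u$ can lie: since $J_0$ is admissible, the $\RR$-coordinate on the positive cylindrical end $[0,\infty)\times M'$ is (pluri)subharmonic along $u$, so it attains no interior maximum, and as $u$ has no puncture asymptotic to an orbit at $+\infty$ this forces $u$ into $W^- := W \cup\big((-\infty,0]\times M\big)$, with every puncture negative and asymptotic to a closed orbit of $X_0$ in $B\cup\iI\cup\p M_0$. A second application of the same idea — comparing $u$ with the leaves of $\fF$ that are trivial cylinders over the orbits in $B$, $\p M_0$ and $\iI\setminus\iI_0$, which bound the region foliated by $\fF$ — shows that $u$ cannot escape that region; on the piece $\RR\times(M\setminus M_0)$, where no foliation is available, $\omega$ is (after reparametrizing the $\RR$-factor) the symplectization of $(M\setminus M_0,\lambda)$, and a finite-energy curve with only negative ends cannot enter it except as a trivial cylinder, which $u$ is not, being somewhere injective with no positive end.

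Next I would show that $u$ is in fact a leaf of $\fF$. Since $u$ is somewhere injective it cannot agree with a leaf on an open set without agreeing everywhere, so if $u$ were not a leaf it would meet the leaves of $\fF$ transversally away from a discrete set of tangencies, each contributing positively to intersection numbers. Choosing a generic page $F$ of $\boldsymbol{\pi}'$ that meets the image of $u$ — a leaf of $\fF$ with \emph{only positive} ends — the punctured-curve intersection number $u\bullet F$ is then strictly positive: $u$ and $F$ share no asymptotic orbit ($u$ has only negative ends, $F$ only positive ends), so the intersection theory of asymptotically cylindrical holomorphic curves applies and the count is purely geometric. On the other hand $u\bullet F$ is invariant under homotopies of $F$ through curves with the same asymptotics, and pushing $F$ up the positive cylindrical end (equivalently, pushing $u$ down the negative one) separates the two, giving $u\bullet F=0$ — a contradiction. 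Hence $u$ coincides with a leaf of $\fF$, and being somewhere injective it \emph{is} that leaf.

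Finally I would identify which leaves of $\fF$ carry no positive end. By Proposition~\ref{prop:Jhol} the leaves fall into four types: holomorphic lifts of pages of $\boldsymbol{\pi}$ and $\boldsymbol{\pi}'$, which have positive ends; trivial cylinders over orbits in $B$, $\p M_0$ and $\iI\setminus\iI_0$, which persist into $M'$ and hence also run off to $+\infty$; the completed co-core $\roundcocore_{\Sigma,\infty}$, which runs into $[1,\infty)\times M'$; and the completed core $\roundcore_{\Sigma,\infty}$ together with its $S^1$-translations, which run off only into the negative end and are capped inside the handle. Thus the core-translates are the only leaves without a positive end, so $u$ must be one of them. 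The step I expect to be the real obstacle is the intersection-theoretic comparison in the middle paragraph: making $u\bullet F$ rigorous requires Siefring's intersection theory for punctured holomorphic curves (its positivity and homotopy invariance) together with a careful ``general position at infinity'' argument ensuring that sliding $F$ along the cylindrical end changes nothing; disposing of the non-foliated region $\RR\times(M\setminus M_0)$ is a secondary but not entirely vacuous point.
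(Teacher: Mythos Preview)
Your argument is correct and follows the same route as the paper's: if $u$ is not a leaf it must positively intersect some leaf $v$ with only positive ends, and since $v$ is homotopic in the moduli space to a lift of a page of $\boldsymbol{\pi}'$ living in $[c,\infty)\times M'$ for arbitrarily large~$c$, positivity of intersections gives a contradiction. One small correction: the completed co-core $\roundcocore_{\Sigma,\infty}$ is not a leaf of~$\fF$ (by Proposition~\ref{prop:Jhol}(5) it intersects the leaves transversely), and your opening maximum-principle step, while not wrong, is more than needed---the paper simply observes that $\omega$ is exact outside the surgery region, so $u$ must meet the handle.
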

\begin{proof}
There are no curves without positive ends outside the region of surgery
since here the symplectic form is exact,
thus we may assume~$u$ intersects both the handle and its complement.
If~$u$ is a leaf of~$\fF$ then it must be an $S^1$-translation of the
core, as all other leaves have positive ends.  If it is not a leaf
of~$\fF$ then it has a positive intersection with some leaf~$v$, and
without loss of generality we may suppose that~$v$ has only positive
ends.  Then~$v$ is homotopic in the moduli space to a holomorphic lift
of a page of~$\boldsymbol{\pi}'$, which we may assume lives in the
region $[c,\infty) \times M'$ for an arbitrarily large number $c > 0$
and thus cannot intersect~$u$.  This is a contradiction, due to
positivity of intersections.
\end{proof}

To apply these constructions to ECH, or to Symplectic Field Theory
for that matter, one must perturb $\xi_0$ and $\xi_0'$ to contact
structures and perturb~$J_0$ with them.  The
$J_0$-holomorphic leaves of~$\fF$ will not generally behave well under
this perturbation: a leaf with only positive ends for instance,
if it has genus~$g$, will have Fredholm index $2 - 2g$ and thus disappears
under any generic perturbation of the data unless $g=0$.
Proposition~\ref{prop:noPositive}, however, should still hold under a
sufficiently small perturbation, because for any sequence $J_k$ of
perturbed almost complex structures converging to~$J_0$, a sequence
of $J_k$-holomorphic curves should converge in the sense of
\cite{SFTcompactness} to a $J_0$-holomorphic building, and
Proposition~\ref{prop:noPositive} determines what this building can
look like.  This is a variation on the uniqueness argument used in
\cite{Wendl:openbook2} to prove vanishing of the ECH contact invariant:
higher genus holomorphic curves do not generically exist, but they remain
useful for proving that no other curves can exist either.

\begin{bibdiv}
\begin{biblist}
\bibselect{wendlc}
\end{biblist}
\end{bibdiv}

\end{document}